\newtheorem{theorem}{Theorem}[section]
\newtheorem{procedure}[theorem]{Procedure}
\newtheorem{lemma}[theorem]{Lemma}
\newtheorem{proposition}[theorem]{Proposition}
\newtheorem{corollary}[theorem]{Corollary}
\theoremstyle{definition}
\newtheorem{definition}[theorem]{Definition}
\theoremstyle{remark}
\newtheorem{remark}[theorem]{Remark}
\newtheorem{conjecture}[theorem]{Conjecture}
\definecolor{A}{rgb}{.75,1,.75}
\numberwithin{equation}{section}
\newcommand{\B}{\mathfrak B}
\newcommand{\C}{ \mathbb C }
\newcommand{\Fi}{\mathcal O_{\m|\infty}^{++}}
\newcommand{\FPn}{\mathcal O_{\m|n}^{+}}
\newcommand{\FnDelta}{\mathcal O_{\m|n}^{+,\Delta}}
\newcommand{\FPPn}{\mathcal O_{\m|n}^{++}}
\newcommand{\Oi}{\mathcal O_{\m+\infty}^{++}}
\newcommand{\OnDelta}{\mathcal O_{\m+n}^{+,\Delta}}
\newcommand{\OPn}{\mathcal O_{\m+n}^{+}}
\newcommand{\OPPn}{\mathcal O_{\m+n}^{++}}
\newcommand{\Xmi}{X_{\m|\infty}}
\newcommand{\Zmn}{ \Z_+^{\m|n}}
\newcommand{\Zmpn}{ \Z_+^{\m+n}}
\newcommand{\glmpn}{{\mathfrak g\mathfrak l}(m+n)}
\newcommand{\g}{\gamma}
\newcommand{\gl}{{\mathfrak g\mathfrak l}}
\newcommand{\glmn}{{\mathfrak g\mathfrak l}(m|n)}
\newcommand{\glsuper}{{\mathfrak g\mathfrak l}(m|\infty)}
\newcommand{\hf}{\frac12}
\newcommand{\la}{\lambda}
\newcommand{\La}{\Lambda}
\newcommand{\m}{{\bf m}}
\newcommand{\n}{{\bf n}}
\newcommand{\N}{\mathbb N}
\newcommand{\pr}{\text{pr}}
\newcommand{\wt}{ \text{wt} }
\newcommand{\wgv}{\Lambda^{\infty} \mathbb V}
\newcommand{\wgw}{\Lambda^{\infty} \mathbb V^*}
\newcommand{\Z}{ \mathbb Z }
\begin{document}
\title[Brundan-Kazhdan-Lusztig and super duality conjectures]
{Brundan-Kazhdan-Lusztig and super duality conjectures}

\author[Shun-Jen Cheng]{Shun-Jen Cheng}
\address{Institute of Mathematics, Academia Sinica, Taipei, Taiwan
11529}\email{chengsj@math.sinica.edu.tw}

\author[Weiqiang Wang]{Weiqiang Wang}
\address{Department of Mathematics, University of Virginia,
Charlottesville, VA 22904} \email{ww9c@virginia.edu}

\subjclass[2000]{Primary 17B10, Secondary 17B37, 20C08}

\begin{abstract}
We formulate a general super duality conjecture on connections
between parabolic categories $\mathcal O$ of modules over Lie
superalgebras and Lie algebras of type $A$, based on a Fock space
formalism of their Kazhdan-Lusztig theories which was initiated by
Brundan. We show that the Brundan-Kazhdan-Lusztig (BKL)
polynomials for $\glmn$ in our parabolic setup can be identified
with the usual parabolic Kazhdan-Lusztig polynomials. We establish
some special cases of the BKL conjecture on the parabolic category
$\mathcal O$ of $\glmn$-modules and additional results which
support the BKL conjecture and super duality conjecture.
\end{abstract}

\maketitle

\setcounter{tocdepth}{1} \tableofcontents

\date{}

\tableofcontents

\section{Introduction}

\subsection{The earlier work}

In 2003 Brundan \cite{Br} obtained a purely algebraic and conceptual
solution to the problem of finding finite-dimensional irreducible
characters of the complex Lie superalgebra $\gl(m|n)$. Earlier
Serganova \cite{Se} found an algorithm for computing these
irreducible characters using a mixture of algebraic and geometric
technique. This problem can be traced back three decades earlier to
Kac \cite{K1, K2}, where initial progress was made. In the meantime,
there has been a tremendous amount of work towards it with various
partial results (see \cite{Se, Br, CWZ} for more references).

In Brundan's approach, the Hecke algebra modules and their
bar-invariant basis in the standard Kazhdan-Lusztig (KL) theory
\cite{KL, KL2, Deo} are replaced by the module $\Lambda^m \mathbb V
\bigotimes \Lambda^n \mathbb V^*$ over the quantum group $\mathcal
U_q\mathfrak{sl}_\infty$ and its Lusztig-Kashiwara canonical
basis/global basis, where $\mathbb V$ denotes the natural $\mathcal
U_q\mathfrak{sl}_\infty$-module. The {\em Fock space} $\Lambda^m
\mathbb V \bigotimes \Lambda^n \mathbb V^*$ at $q=1$ should be
regarded as the Grothendieck group of the category $\mathcal
O^+_{m|n}$ of finite-dimensional $\gl(m|n)$-modules. Such a Fock
space approach has been further applied successfully to study the
finite-dimensional irreducible and tilting characters of other Lie
superalgebras \cite{Br3, CWZ2}. In \cite{Br}, for the first time, a
Kazhdan-Lusztig conjecture for the full category $\mathcal O$ of
$\gl(m|n)$-modules is formulated using the canonical basis theory of
the module $\mathbb V^{\otimes m} \bigotimes (\mathbb V^*)^{\otimes
n}$.

Subsequently in a joint work \cite{CWZ} of the authors with Zhang,
a connection between $\mathcal O^+_{m|n}$ and the parabolic
category $\mathcal O^+_{m+n}$ of $\gl(m+n)$-modules, associated
with the maximal parabolic subalgebra $\mathfrak p_{m,n}$, was
formulated. Roughly speaking, by developing further the Fock space
formalism we showed that for a fixed $m$ the inverse limits
$\lim\limits_{{\longleftarrow}} \mathcal O^+_{m|n}$ and
$\lim\limits_{{\longleftarrow}} \mathcal O^+_{m+n}$, with respect
to $n$, afford isomorphic Kazhdan-Lusztig theories, and moreover,
we conjectured an equivalence of the two categories.

\subsection{The conjectures}

Fix an $s$-tuple of positive integers $\m =(m_1,\ldots,m_s)$ with
$\sum_a m_a =m$. In the present paper we formulate a parabolic
version of Brundan's conjecture for a category $\mathcal O^+_{\m|n}$
of $\gl(m|n)$-modules with respect to a fairly general parabolic
subalgebra $\mathfrak p_{\m,n}$, and a super duality conjecture on
the equivalence of categories of $\lim\limits_{{\longleftarrow}}
\mathcal O^+_{\m|n}$ and $\lim\limits_{{\longleftarrow}} \mathcal
O^+_{\m+n}$, where $\mathcal O^+_{\m+n}$ stands for an analogous
parabolic category of $\gl(m+n)$-modules.

According to this version of Brundan-Kazhdan-Lusztig (BKL)
conjecture, the parabolic Verma, tilting, and irreducible modules in
$\mathcal O^+_{\m|n}$ correspond respectively to the monomial,
canonical, and dual canonical basis elements in the Fock space
$\mathcal E^{\m|n} := \bigotimes_a \Lambda^{m_a} \mathbb V
\bigotimes \Lambda^n \mathbb V^*$ (or rather in a suitable
topological completion). On the other hand, one has an increasingly
better known reformulation of the Kazhdan-Lusztig conjecture
(theorem of Beilinson-Bernstein \cite{BB} and Brylinski-Kashiwara
\cite{BK}; see Soergel \cite{So2} for tilting module characters)
that the tilting and irreducible modules in $\mathcal O^+_{\m+n}$
correspond to the canonical and dual canonical basis elements in the
Fock space $\mathcal E^{\m+n} :=\bigotimes_a \Lambda^{m_a} \mathbb V
\bigotimes \Lambda^n \mathbb V$. Alternatively, the KL conjecture
can also be viewed as a special case of the parabolic BKL conjecture
with $n=0$.

{
 Even though the formulation of the above conjectures in such
a parabolic generality seems inevitable or unsurprising to some
experts after the works \cite{Br} and \cite{CWZ}, we hope that the
general reader may still find it worthwhile and helpful, as it
clarifies the scope and the limitation of these new developments.
Sometimes a more general conjecture has a better chance for
(partial) verification as they involve simpler combinatorics
(compare the treatment of parabolic KL polynomials by Deodhar
\cite{Deo} and its impact on the related development of parabolic KL
conjectures).
 }

\subsection{The main results}

We establish various compatibility results on the bar involution,
canonical and dual canonical bases of the Fock spaces $\mathcal
E^{\m|n}$ and $\mathcal E^{\m+n}$, when $n$ varies. In particular
there is a canonical isomorphism of these spaces at the limit $n \to
\infty$. This allows us to identify Brundan's KL polynomials with
the classical type $A$ parabolic KL polynomials. We show that the
canonical basis elements in $\mathcal E^{\m|n}$, and then in
$\mathcal E^{\m+n}$, stabilize in a suitable sense for $n\gg 0$. We
further establish in Theorem \ref{th:BrConj} a positivity result on
the expansion of the divided powers of Chevalley generators acting
on (dual) canonical basis elements, confirming a parabolic version
of \cite[Conjecture~2.28]{Br}. As a corollary it follows that every
canonical basis element in the Fock space $\mathcal E^{\m|n}$ is a
{\em finite} sum of monomials.

In an approach different from \cite{CWZ}, we establish in
Section~\ref{sec:superRep} properties of tilting modules in
$\mathcal O^+_{\m|n}$ for varying $n$ without assuming either the
validity of the BKL conjecture or using explicit formulas of
canonical basis. We introduce truncation functors that interpolate
the categories $\mathcal O^+_{\m|n}$ and $\mathcal O^+_{\m+n}$ for
varying $n$, and establish various compatibility results. In
particular, we prove a stability result for the tilting modules
$U_n(\la)$ in $\mathcal O^+_{\m|n}$ for a given weight $\la$, i.e.,
the $U_n(\la)$ have the same finite Verma flag structures for every
$n\gg 0$ (where it is understood that a tail of zeros is added to
$\la$ for larger $n$). The connections between canonical bases in
various Fock spaces $\mathcal E^{\m|n}$ and $\mathcal E^{\m+n}$
further allow us to establish the same stability result for tilting
modules in $\mathcal O^+_{\m+n}$. (We are not aware of any other
proofs even though such a statement appears to be classical).

The parabolic BKL conjecture for $\mathcal O^+_{\m|n}$ would follow
from the properties of the truncation maps and functors established
in this paper, under the assumption of the validity of the super
duality conjecture. Also, it would follow from the validity of
Brundan's conjecture on the full category $\mathcal O$. { However,
the parabolic formulation of this paper can still be useful, since
most of our results stated above either do not make sense or cannot
be proved for now in the setup of the full category $\mathcal O$ or
its associated Fock space.}

Note that the known proofs of the classical Kazhdan-Lusztig
conjectures ultimately rely on geometric machinery. For lack of such
geometric tools, the BKL conjecture, or the super duality conjecture
in general, presently appears to lie beyond our reach. We obtain
some partial verification of the BKL conjecture under some
``regularity" condition on the weights. In the special case when $\m
=(1,1)$ and $n$ is arbitrary, we establish the parabolic BKL
conjecture and a weak version of the super duality conjecture, where
among others the method of the $\mathfrak{sl}_2$-categorification of
Chuang-Rouquier \cite{CR} is used. We also establish the parabolic
BKL conjecture in another special case when $\m =(m,1)$ and $n=1$.
In both cases, we find explicit formulas for the canonical basis and
thus the weights of Verma flags of the tilting modules. (Our
approach can be adapted to give a purely algebraic proof of the
usual type $A$ Kazhdan-Lusztig conjecture in the corresponding
parabolic and low rank cases).

\subsection{The organization}

The layout of this paper is as follows.

\begin{itemize}
\item In Section~\ref{sec:basic}, we define the canonical and dual
canonical bases for the Fock spaces $\mathcal E^{\m|n}$, and
investigate their relationship for varying $n$ under the truncation
maps.

\item In Section~\ref{sec:superRep}, we formulate the parabolic
BKL conjecture on $\mathcal O^+_{m|n}$ and establish various
results on tilting modules.

\item In Section~\ref{sec:FockredKL}, we reformulate the classical
parabolic Kazhdan-Lusztig conjecture of type $A$ by means of the
Fock space $\mathcal E^{\m+n}$ and also present our general super
duality conjecture. We obtain a key isomorphism result on Fock
spaces which underlies the super duality conjecture.

\item In Section~\ref{sec:categorification}, we adapt the powerful
machinery of the $\mathfrak{sl}_2$-categorification of
Chuang-Rouquier to the category $\mathcal O^+_{\m|n}$. Some formal
consequences of the $\mathfrak{sl}_2$-categorification are used in
the subsequent sections.

\item In Section~\ref{sec:partial}, as a preparation for the next
sections, we establish several technical results regarding the
tilting modules in the category $\mathcal O^+_{\m|n}$. We also
give an explicit description of the tilting modules when the
weights satisfy a {\em regular} condition, which partially
verifies the parabolic BKL conjecture.

\item In Section~\ref{canonical:11n}, we establish the parabolic
BKL conjecture and a weak version of the super duality conjecture
when $\m=(1,1)$. In Section~\ref{sec:categorym11}, we establish
the parabolic BKL conjecture for $\mathcal O^+_{m,1|1}$.

\item In Section~\ref{sec:gl21}, we focus on the category
$\mathcal O_{2|1}^+$ of $\gl(2|1)$-modules. We work out explicitly
the Verma flag structures for the tilting and projective modules, as
well as the composition series of Verma modules. We further classify
the projective tilting modules.
\end{itemize}

We often omit the details of proofs when they are very similar or
even identical to those for the special case (i.e. $\m=m$) treated
in \cite{Br} and \cite{CWZ} to keep the paper within a reasonable
size. The reader is recommended to have copies of these two papers
at hand when reading the present paper.

{\bf Acknowledgments.} S-J.C.~is partially supported by an NSC grant
of the R.O.C.~and an Academia Sinica Investigator grant. He also
thanks the Department of Mathematics, University of Virginia, for
hospitality and support. W.W.~is partially supported by NSF. We are
grateful to Jon Brundan for his influential ideas and several
stimulating discussions. We also thank R.B. Zhang for his
participation at an early stage of this work and in \cite{CWZ}.
Notation: $\N =\{0,1,2,\cdots\}$.

\section{Basics of $q$-multilinear algebras}
\label{sec:basic}

In this section we set up various notations, compatible with
\cite{CWZ} which is our special case when $\m =m$. We refer to
\cite[Section~2]{CWZ} for more detail (also see \cite{Br}).
\subsection{The quantum group}

The quantum group $U_q{\mathfrak g\mathfrak l}_\infty$ is the
$\mathbb Q(q)$-algebra generated by $E_a, F_a, K^{\pm 1}_a, a \in
\Z$, subject to the relations

\begin{eqnarray*}
 K_a K_a^{-1} =K_a^{-1} K_a =1, &&
 K_a K_b = K_b K_a, \\
 K_a E_b K_a^{-1} = q^{\delta_{a,b} -\delta_{a,b+1}} E_b, &&
 K_a F_b K_a^{-1} = q^{\delta_{a,b+1}-\delta_{a,b}}
 F_b, \\
 E_a F_b -F_b E_a &=& \delta_{a,b} (K_{a,a+1}
 -K_{a+1,a})/(q-q^{-1}), \\
 E_a E_b = E_b E_a,   &&
 F_a F_b = F_b F_a,  \qquad\qquad \text{if } |a-b|>1,\\
 E_a^2 E_b +E_b E_a^2 &=& (q+q^{-1}) E_a E_b E_a,  \qquad\qquad \text{if } |a-b|=1, \\
 F_a^2 F_b +F_b F_a^2 &=& (q+q^{-1}) F_a F_b F_a,  \qquad\qquad \text{if } |a-b|=1.
\end{eqnarray*}
Here and below $K_{a,b} :=K_aK_{b}^{-1}$ for $a \neq b \in \Z$.
Define the bar involution on $U_q{\mathfrak g\mathfrak l}_\infty$
to be the anti-linear automorphism $ ^-:  E_a\mapsto E_a, \quad
F_a\mapsto F_a, \quad K_a\mapsto K_a^{-1}.$
Here by {\em anti-linear} we mean with respect to the automorphism
of $\mathbb Q(q)$ given by $q \mapsto q^{-1}$.

Let $\mathbb V$ be the natural $U_q{\mathfrak g\mathfrak
l}_\infty$-module with basis $\{v_a\}_{a\in\Z}$ and $\mathbb W
:=\mathbb V^*$ the dual module with basis $\{w_a\}_{a\in\Z}$ such
that $w_a (v_b) = (-q)^{-a} \delta_{a,b}$.  We have
\begin{align*}
K_av_b=q^{\delta_{ab}}v_b,\quad E_av_b=\delta_{a+1,b}v_a,\quad
F_av_b=\delta_{a,b}v_{a+1},\\
K_aw_b=q^{-\delta_{ab}}w_b,\quad E_aw_b=\delta_{a,b}w_{a+1},\quad
F_aw_b=\delta_{a+1,b}w_{a}.
\end{align*}
As in \cite{Br, CWZ} we shall use the comultiplication $\Delta$ on
$U_q{\mathfrak g\mathfrak l}_\infty$ defined by:
\begin{eqnarray*}
 \Delta (E_a) &=& 1 \otimes E_a + E_a \otimes K_{a+1, a}, \\
 \Delta (F_a) &=& F_a \otimes 1 +  K_{a, a+1} \otimes F_a, \quad
 \Delta (K_a) = K_a \otimes K_{ a}.
\end{eqnarray*}
We let $\mathcal U = U_q{\mathfrak s\mathfrak l}_\infty$ denote
the subalgebra with generators $E_a$, $F_a$, $K_{a,a+1}, a\in \Z$.

For $k \ge 0$, set $[k]=\frac{q^k-q^{-k}}{q-q^{-1}}$ and $[k]!
=[k][k-1]\cdots [1]$, and introduce the divided power $E_a^{(k)}
=E_a^{k}/[k]!, F_a^{(k)} =F_a^{k}/[k]!$. One has the following
comultiplication formula
\begin{eqnarray*}
\Delta(F_a^{(k)}) = \sum_{i=0}^k  q^{i(k-i)} K_{a, a+1}^i
F_a^{(k-i)} \otimes F_a^{(i)}.
\end{eqnarray*}

\subsection{The Fock space $\mathcal E^{\m+n}$}

For $m \in \N, n\in\N \cup \infty$, we let
$$I(m|n):=\{-m,-m+1,\ldots,-1\}\cup\{1,2,\ldots,n\}.
$$
Given an $s$-tuple of positive integers
$$\m =(m_1,\ldots,m_s), \qquad \text{where }  m_1 +\cdots + m_s=m,$$
we denote by $S_{m+n}$ the symmetric group of (finite) permutations
on $I(m|n)$, by $S_{\m|n}$ its Young subgroup $S_{m_1}\times\cdots
S_{m_s} \times S_n$, and by $w_0$ the longest element in $S_{\m|n}$
for $n$ finite. Denote by $\tau_{ij}$ the transposition
interchanging $i$ and $j$.

For $n \in \N \cup \infty$, we let $\Z^{m+n}$ or $\Z^{m|n}$ be the
set of integer-valued functions on $I(m|n)$.
 Set (for a finite $n$)
\begin{align*}
\Z_+^{\m+n} &:= \{f \in \Z^{m+n} \mid f(-m) > \cdots> f(-m+m_1-1),
\\
& \qquad\quad f(-m+m_1)>  \cdots > f(-m+m_1+m_2-1),  \\
& \qquad\quad \ldots, f(-m_s) >  \cdots > f(-1), f(1) > \cdots > f(n)\},   \\
\Z^{\m+n}_{++} &:= \{f \in \Z_+^{\m+n} \mid  f(n) \ge 1-n \}.\\
\Z_+^{\m+\infty} &:= \{f \in \Z^{m+\infty} \mid f(-m) > \cdots>
f(-m+m_1-1),
\\
& \qquad\quad f(-m+m_1)>  \cdots > f(-m+m_1+m_2-1), \ldots, \\
& \qquad\quad f(-m_s) >  \cdots > f(-1), f(1)  > f(2) > \cdots;
f(i) =1-i \text{ for } i\gg0 \}.
\end{align*}
Occasionally, we shall denote $\Z_{++}^{\m+\infty} \equiv
\Z_+^{\m+\infty}$.

For $n \in \N$, one can define a right action of the Hecke algebra
$\mathcal H_n$ of type $A$ on the tensor space $\mathbb V^{\otimes
n}$ which commutes with the action via the $(n-1)st$-iterated
comultiplication $\Delta^{n-1}$ of $\mathcal U$ following Jimbo
\cite{Jim}. One can define the space $\Lambda^n \mathbb V$ of
finite $q$-wedges as a quotient space of $\mathbb V^{\otimes n}$
via the skew $q$-symmetrizer from $\mathcal H_n$ and then the
space $\Lambda^\infty \mathbb V$ of infinite-wedges by taking the
limit $n \rightarrow \infty$ appropriately as done in \cite{KMS}.
These spaces are naturally $\mathcal U$-modules. The {\em
$q$-wedge} $v_{a_1} \wedge \cdots \wedge v_{a_n}$ is an element of
$\Lambda^n \mathbb V$, which is the image  of $v_{a_1} \otimes
\cdots \otimes v_{a_n}$ under the canonical map when $\Lambda^n
\mathbb V$ is regarded as a quotient of $\mathbb V^{\otimes n}$.
The elements $v_{a_1} \wedge \cdots \wedge v_{a_n}$, for $a_1 >
\cdots > a_n$ and $a_i\in \Z$, form a basis for $\Lambda^n \mathbb
V$. Similarly, the $\mathcal U$-module $\wgv$ has a basis given by
the infinite $q$-wedges
$v_{m_1} \wedge v_{m_2} \wedge v_{m_3} \wedge \cdots,$
where  $m_1 > m_2  >m_3> \cdots,$ and $m_i = 1-i$ for $i \gg0$
(our $\wgv$ is $F_{(0)}$ in  \cite{KMS}). Alternatively, $\wgv$
has a basis
$$|\la\rangle := v_{\la_1} \wedge v_{\la_2 -1} \wedge v_{\la_3 -2}\wedge \cdots,$$
where $\la =(\la_1, \la_2, \ldots)$ runs over the set of all
partitions.

For $n \in \N \cup \infty$, the space
$$\mathcal E^{\m+n} := \bigotimes_{a=1}^s\Lambda^{m_a} \mathbb V \bigotimes \Lambda^n \mathbb V,$$
is acted upon by $\mathcal U$ via the $s$-th iterated
comultiplication $\Delta^s$.
It has the {\em monomial basis}
$$\mathcal K_f :=
 v_{f[-m,-m+m_1)} \otimes
 v_{f[-m+m_1,-m+m_1+m_2)} \otimes \cdots  \otimes v_{f[-m_s,-1]} \otimes
 v_{f[1,n]},
$$
where $f$ runs over the set $\Z_+^{\m+n}$ and we have denoted by,
for given $a\leq b$,
$$v_{f[a,b]} \equiv v_{f[a,b+1)} :=v_{f(a)} \wedge v_{f(a+1)}
\wedge \cdots \wedge v_{f(b)}.$$

The {\em Bruhat ordering} $\,\leq$ on $\Z^{m+n}$, which comes from
the Bruhat ordering on $S_{m+n}$, is the transitive closure of the
relation $f< f \cdot \tau_{ij}$, if $f(i) < f(j)$, for $i,j \in
I(m|n)$ with $i<j$. This induces the Bruhat ordering $\leq$ on
$\Z_+^{\m+n}$.

Let $P$ be the free abelian group with basis $\{\epsilon_a\vert
a\in\Z\}$ equipped with a bilinear form $(\cdot | \cdot)$, for which
the $\epsilon_a$'s are orthonormal. For later use, we define the
$\epsilon$-weights on $\Z^{m+n}$:
\begin{equation}\label{nonsuperweight}
\wt^{\epsilon} (f):=\sum_{i\in I(m|n)}\epsilon_{f(i)}, \qquad
\text{for } f \in \Z^{m+n}.
\end{equation}

\subsection{The Fock space $\mathcal E^{\m|n}$}
\label{subsec:superFock}

Set (for a finite $n$)%
\begin{align*}
\Z_+^{\m|n} &:= \{f \in \Z^{m|n} \mid f(-m) > \cdots> f(-m+m_1-1),
\\
& \qquad\quad f(-m+m_1)>  \cdots > f(-m+m_1+m_2-1),  \\
& \qquad\quad \ldots, f(-m_s) >  \cdots > f(-1), f(1)
< \cdots < f(n)\},   \\
\Z^{\m|n}_{++} &:= \{f \in \Z_+^{\m|n} \mid  f(n) \leq n \}.\\
\Z_+^{\m|\infty} &:= \{f \in \Z^{m+\infty} \mid f(-m) > \cdots>
f(-m+m_1-1),
\\
& \qquad\quad f(-m+m_1)>  \cdots > f(-m+m_1+m_2-1), \ldots, \\
& \qquad\quad f(-m_s) >  \cdots > f(-1),
 f(1)  < f(2) < \cdots; f(i) =i \text{ for } i\gg0 \}.
\end{align*}
(Occasionally, we also denote $\Z_{++}^{\m|\infty} \equiv
\Z_+^{\m|\infty}$.)

Recall that $\mathbb W =\mathbb V^*$ is the $\mathcal U$-module dual
to $V$ with basis $\{w_a\}_{a \in \Z}$. The space
$\mathbb W^{\otimes n}$ admits a right action of the Hecke algebra
$\mathcal H_n$ which commutes with the action via $\Delta^{n-1}$
of the quantum group $\mathcal U$. In the same way using the skew
$q$-symmetrizer, the $\mathcal U$-module $\Lambda^n\mathbb W$ has
a basis given by $w_{a_1} \wedge w_{a_2}  \wedge \cdots\wedge
w_{a_n}$ for $a_1<\ldots <a_n$. Similarly, we construct the space
$\Lambda^\infty \mathbb W$ of semi-infinite $q$-wedges
$w_{n_1} \wedge w_{n_2}  \wedge \cdots,$
where $n_i =i$ for $i\gg0$, which carries a $\mathcal U$-module
structure. Writing the conjugate partition of $\la$ as $\la'
=(\la'_1, \la'_2, \ldots)$, we set
$$ |\la'_*\rangle := w_{1 -\la'_1} \wedge w_{2 -\la'_2} \wedge w_{3
-\la'_3} \wedge \cdots.$$ The set $\{|\la'_*\rangle\}$ is a basis
for $\Lambda^\infty \mathbb W$.

For $n \in \N \cup \infty$, we denote
$$\mathcal E^{\m|n} = \bigotimes_{a=1}^s\Lambda^{m_a} \mathbb V  \bigotimes \Lambda^n \mathbb
W,$$ which is acted upon by $\mathcal U$ via the $s$-th iterated
comultiplication $\Delta^s$. The space $\mathcal E^{\m|n}$ has the
{\em monomial basis}
$$K_f :=  v_{f[-m,-m+m_1)} \otimes
 v_{f[-m+m_1,-m+m_1+m_2)} \otimes \cdots  \otimes v_{f[-m_s,-1]} \otimes
 w_{f[1,n]},$$
 where $f$ runs over $\Z_+^{\m|n}$ and $w_{f[1,n]}=w_{f(1)}\wedge\cdots\wedge w_{f(n)}$.

For $i \in I(m|n)$ we define $d_i \in \Z^{m|n}$ by $j \mapsto -
\text{sgn} (i) \delta_{ij}$. For $f, g\in\Z^{m|n}$, we write $f
\downarrow g$ if one of the following holds:
\begin{enumerate}
\item $g=f-d_i +d_j$ for some $i<0 <j$ such that $f(i) =f(j)$;
\item $g =f \cdot \tau_{ij}$ for some $i<j<0$ such that $f(i)
>f(j)$; \item $g =f \cdot \tau_{ij}$ for some $0<i<j$ such that
$f(i) <f(j)$.
\end{enumerate}
The {\em super Bruhat ordering} on $\Z^{m|n}$ is defined as
follows: for $f,g\in\Z^{m|n}$, we say that $f\succcurlyeq g$, if
there exists a sequence $f=h_1,   \ldots, h_r=g \in \Z^{m|n}$ such
that $h_1 \downarrow h_2, \cdots, h_{r-1} \downarrow h_r$. It can
also be described cf.~\cite[\S 2b]{Br} by a number of inequalities
in terms of the $\epsilon$-weights on $\Z^{m|n}$, which are
defined by:
\begin{equation}\label{superweight}
\wt^{\epsilon} (f):=\sum_{i\in
I(m|n)}-\text{sgn}(i)\epsilon_{f(i)}, \qquad \text{for } f \in
\Z^{m|n}.
\end{equation}
The super Bruhat ordering on $\Z^{m|n}$ induces a super Bruhat
ordering on the subsets $\Z^{\m|n}_+, \Z^{\m|n}_{++}$, and
$\Z^{\m|\infty}_+$.

For $n\in\N$, the {\em degree of atypicality} (or {\em atypicality
number}) of $f\in\Z^{m|n}$ is defined to be
$$\#f:= \hf\left(m+n -  \sum_{a\in\Z} |(\wt^\epsilon (f), \epsilon_a)|\right).$$
For $f\in\Z^{m|\infty}$, we define $\#f$ to be the degree of
atypicality of the restriction of $f$ to $I(m|n)$ for $n \gg 0$
(which is clearly well-defined).

If $f, g \in  \Z^{\m|n}_+$ are comparable under the super Bruhat
ordering, then $\#f =\#g.$ If $\#f=0$, we say that $f$ is {\em
typical}; otherwise $f$ is {\em atypical}. An element
$\Z^{\m|n}_+$ is minimal in the super Bruhat ordering if and only
if $f$ is typical and $f \cdot\tau_{ij}$ is not conjugate under
the action of $S_{\m|n}$ to an element in $\Z^{\m|n}_+$ whenever
$f(i)>f(j)$ with $i<j <0$.
\subsection{Bases for $\widehat{\mathcal E}^{\m|n}$}\label{aux:base}

Let $n \in \N \cup \infty$. For $d\in\N$ let ${\mathcal
E}^{\m|n}_{\ge -d}$ be the $\mathbb Q(q)$-subspace of $\mathcal
E^{\m|n}$ spanned by $K_f$ with $f(i)\ge -d$, for all $i\in
\{1,\cdots,n\}$. Following \cite{Br, CWZ} we shall denote a certain
topological completion of ${\mathcal E}^{\m|n}$ by
$\widehat{\mathcal E}^{\m|n}$ whose elements may be viewed as
infinite $\mathbb Q(q)$-linear combinations of elements in $\mathcal
E^{\m|n}$, which under the projection onto $\mathcal E^{\m|n}_{\ge
-d}$ are finite sums for all $d\in\N$ (cf.~\cite[\S2-d]{Br}).

We can define a quasi-matrix following \cite[Chap.~24, 27]{Lu},
that extends the bar-involutions on $\mathcal E^{{\bf m}|0}$ and
on $\mathcal E^{0|n}$.  Using this we can then construct a
bar-involution on $\mathcal E^{{\bf m}|n}$. The following
proposition is a variant of \cite[Theorem~2.14, Theorem~3.5]{Br}
and results of Lusztig, and it can be proved similarly.

\begin{proposition}\label{thm:involutionsuper}
Let $n \in \N \cup \infty$. There exists a unique continuous,
anti-linear bar map ${}^-: \widehat{\mathcal E}^{\m|n} \rightarrow
\widehat{\mathcal E}^{\m|n}$ such that
\begin{enumerate}
\item $\overline{K_f} = K_f$, for all $f \in \Z_+^{\m|n}$ minimal
in the super Bruhat ordering.

\item $\overline{X u} = \overline{X} \overline{u}$, for all $X \in
\mathcal U$ and $u \in \widehat{\mathcal E}^{\m|n}$.

\item The bar map is an involution.

\item $\overline{K_f} = K_f + (*)$, where $(*)$ is a (possibly
infinite) $\Z [q,q^{-1}]$-linear combination of $K_g$'s, with $g
\in \Z_+^{\m|n}$ such that $g  \prec f$.
\end{enumerate}
\end{proposition}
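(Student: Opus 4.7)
The plan is to adapt the construction of \cite[Theorems~2.14 and 3.5]{Br} from the case $\m = m$ (a single part) to the parabolic setting of a general composition $\m=(m_1,\ldots,m_s)$. Since both the quasi-R-matrix formalism and the combinatorics of the super Bruhat order are largely insensitive to how one groups the bosonic factors, most of the argument transfers verbatim, and I will only highlight the steps where the parabolic grouping or the passage to $n=\infty$ needs attention.

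First I would construct a bar involution on each individual factor $\Lambda^{m_a}\mathbb V$ and on $\Lambda^n\mathbb W$ (and on $\Lambda^\infty\mathbb W$ by taking the appropriate stabilization limit, as in \cite{Br, CWZ}) by declaring the minimal basis vector bar-invariant and extending by the $\mathcal U$-equivariance rule $\overline{Xu}=\bar X\bar u$. On a single wedge factor this is standard: one checks by induction on the Bruhat order that $\overline{v_{f[a,b]}}=v_{f[a,b]}+(\text{strictly lower})$, using the triangular form of Lusztig's bar-involution on highest-weight integrable modules.

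Next I would assemble these factor-wise bar maps into a bar map on the full tensor product $\mathcal E^{\m|n}=\bigotimes_{a=1}^{s}\Lambda^{m_a}\mathbb V\otimes \Lambda^n\mathbb W$ by inserting Lusztig's quasi-R-matrix $\Theta\in\widehat{\mathcal U\otimes \mathcal U}$ between adjacent pairs of factors, as in \cite[Chs.~24,~27]{Lu} and \cite[\S2-d]{Br}. One then extends continuously to $\widehat{\mathcal E}^{\m|n}$. Property~(2) is automatic from the intertwining identity $\Theta\,\Delta(X)=\bar\Delta(X)\,\Theta$; property~(1) holds because on a tensor of minimal (antidominant/typical) vectors every non-trivial component of $\Theta$ acts as zero; property~(3) follows from $\Theta\cdot\bar\Theta=1$; and property~(4) reduces to the triangularity of $\Theta$ combined with the description of the super Bruhat order as the transitive closure of the moves $f\downarrow g$, each of which matches the weight-shift pattern of the monomials appearing in $\Theta\cdot K_f$.

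Uniqueness is then a separate, purely formal step: given two maps satisfying (1)--(4), their composite is continuous, $\mathcal U$-equivariant, fixes all minimal $K_f$, and is unitriangular with respect to $\prec$, so it must be the identity after reducing modulo $\mathcal E^{\m|n}_{\ge -d}$ for each $d$. The main obstacle I anticipate is the convergence of $\Theta$ on $\widehat{\mathcal E}^{\m|n}$: one must verify that although $\Theta\cdot K_f$ is a priori an infinite sum, its projection to each $\mathcal E^{\m|n}_{\ge -d}$ is finite. This is handled, as in \cite[\S2-d]{Br}, by observing that the $F_a$-factors occurring in $\Theta$ can only lower indices on the fermionic side $\Lambda^n\mathbb W$ in a controlled way, so only finitely many summands of $\Theta$ contribute modulo $\mathcal E^{\m|n}_{\ge -d}$; the parabolic grouping on the bosonic side does not affect this estimate. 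Once convergence is in place, no genuinely new difficulty arises beyond bookkeeping, and the four conclusions follow as above.
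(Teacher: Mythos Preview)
Your proposal is correct and follows essentially the same approach as the paper, which does not give a detailed proof but simply notes that the result is a variant of \cite[Theorems~2.14, 3.5]{Br} and Lusztig's quasi-$R$-matrix formalism, proved similarly. The only cosmetic difference is that the paper groups the tensor product as $\mathcal E^{\m|0}\otimes\mathcal E^{0|n}$ and applies the quasi-$R$-matrix once across that split (with the bar on $\mathcal E^{\m|0}$ already built), whereas you iterate the quasi-$R$-matrix across all $s+1$ factors; these amount to the same thing.
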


The next theorem now follows by standard arguments (cf. \cite{KL,
Lu, Br}).

\begin{theorem}
Let $n \in \N \cup \infty$. There exist unique {\em canonical
basis} $\{U_f\}$ and {\em dual canonical basis} $ \{L_f \}$, where
${f \in \Z_+^{\m|n}}$,  for $\widehat{\mathcal E}^{\m|n}$ such
that
\begin{enumerate}
\item $\overline{U}_f =U_f$ and $\overline{L}_f =L_f$. \item $U_f
\in K_f + \widehat{\sum}_{g \in \Zmn} q \Z [q] K_g$
 and $L_f \in K_f + \widehat{\sum}_{g \in \Zmn} q^{-1} \Z [q^{-1}]
 K_g$.
\item $U_f = K_f + (*)$ and  $L_f = K_f + (**)$, where $(*)$ and
$(**)$ are (possibly infinite) $\Z [q,q^{-1}]$-linear combinations
of $K_g$'s, with $g \in \Z_+^{\m|n}$ such that $g  \prec f$.
\end{enumerate}
\end{theorem}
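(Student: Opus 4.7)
My plan is to follow the standard Kazhdan--Lusztig--Lusztig argument (cf.\ \cite{KL, Lu, Br}) adapted to the parabolic setting, with Proposition \ref{thm:involutionsuper} as the sole input. First I would dispose of uniqueness. Suppose $U_f$ and $U_f'$ both satisfy (1)--(3). Their difference is bar-invariant by (1) and by (2)--(3) can be written as $\sum_{g \prec f} a_g(q) K_g$ with $a_g \in q\Z[q]$. If this were nonzero, pick $g$ minimal in the super Bruhat order with $a_g \neq 0$. Expanding $\overline{U_f - U_f'}$ using Proposition \ref{thm:involutionsuper}(4) and comparing coefficients of $K_g$ forces $a_g = \overline{a_g}$, contradicting $q\Z[q] \cap q^{-1}\Z[q^{-1}] = \{0\}$. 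The argument for $L_f$ is identical with $q^{-1}\Z[q^{-1}]$ in place of $q\Z[q]$.

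Second, I would construct $U_f$ (and similarly $L_f$) by induction on the super Bruhat order. For $f \in \Z_+^{\m|n}$ minimal, Proposition \ref{thm:involutionsuper}(1) gives $\overline{K_f} = K_f$, so take $U_f := L_f := K_f$. Inductively, assume $U_g$ is defined for all $g \prec f$. Writing $\overline{K_f} = K_f + \sum_{g \prec f} r_{g,f}(q) K_g$ with $r_{g,f} \in \Z[q,q^{-1}]$ by Proposition \ref{thm:involutionsuper}(4), I seek
\[
U_f = K_f + \sum_{g \prec f} a_{g,f}(q)\, U_g, \qquad a_{g,f}(q) \in q\Z[q],
\]
and impose $\overline{U_f} = U_f$. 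Using $\overline{U_g} = U_g$ and the involutivity from Proposition \ref{thm:involutionsuper}(3), this bar-invariance condition translates into the triangular equations $a_{g,f}(q) - \overline{a_{g,f}(q)} = \tilde r_{g,f}(q)$ (for suitable $\tilde r_{g,f} \in \Z[q,q^{-1}]$ built from the $r_{h,f}$ and previously chosen $a_{h,f}$, $h$ strictly between $g$ and $f$), which have a unique solution with $a_{g,f}(q) \in q\Z[q]$ by the elementary fact that every element of $\Z[q,q^{-1}]$ equals $p(q) - \overline{p(q)}$ for a unique $p(q) \in q\Z[q]$. The construction of $L_f$ is analogous.

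Third, and this is where I expect the main obstacle, one has to verify that the (possibly infinite) sum produced by the induction actually converges in $\widehat{\mathcal E}^{\m|n}$, i.e.\ that its image in $\mathcal E^{\m|n}_{\ge -d}$ is a finite sum for every $d \in \N$. The key point is that the bar involution and the transition matrix preserve the $\epsilon$-weight \eqref{superweight}, and the super Bruhat order refines it as in \cite[\S 2b]{Br}; consequently, for fixed $f$ and fixed $d$, only finitely many $g \in \Z_+^{\m|n}$ with $g \preceq f$ can have all $g(i) \geq -d$ (for $i$ in the positive part of $I(m|n)$), so the induction terminates after finitely many steps in each bounded-below quotient. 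This is the direct analogue of the convergence statement in \cite[Theorem~3.5]{Br} and is the essential reason for passing to the completion $\widehat{\mathcal E}^{\m|n}$ in the first place.

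Since every step is a formal adaptation of \cite[Theorem~2.14]{Br} and \cite[Chap.~24]{Lu} to the parabolic index set $\Z_+^{\m|n}$ and the super Bruhat order, I would omit routine details and refer the reader to those sources, writing out only the modifications needed to handle the parabolic Young subgroup $S_{\m|n}$ and the tensor factors $\Lambda^{m_a}\mathbb V$ in place of $\Lambda^m \mathbb V$.
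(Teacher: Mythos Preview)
Your approach is exactly the standard Kazhdan--Lusztig argument the paper invokes (it gives no proof beyond ``follows by standard arguments (cf.\ \cite{KL, Lu, Br})''), so there is no divergence in method. Two small slips, however, should be fixed.

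In the uniqueness argument you write ``pick $g$ minimal in the super Bruhat order with $a_g \neq 0$''. This should be \emph{maximal}: from $\overline{K_h} = K_h + \sum_{h' \prec h}(\cdots)K_{h'}$ one sees that the coefficient of $K_g$ in $\overline{D}$ equals $\overline{a_g} + \sum_{h \succ g}\overline{a_h}\,r_{g,h}$, so only maximality of $g$ kills the extra sum and yields $a_g = \overline{a_g}$. (Such a maximal $g$ exists because everything in sight lies below $f$, and for each fixed $d$ only finitely many such $g$ survive in $\mathcal E^{\m|n}_{\ge -d}$---precisely the finiteness you invoke in your third paragraph.)

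In the existence step, the assertion that ``every element of $\Z[q,q^{-1}]$ equals $p(q) - \overline{p(q)}$ for a unique $p \in q\Z[q]$'' is false as stated: it holds only for $r$ with $\overline{r} = -r$. The $\tilde r_{g,f}$ produced by your induction do satisfy this anti-invariance (this is exactly where the involutivity in Proposition~\ref{thm:involutionsuper}(3) enters), so the argument is fine, but you should say so explicitly.
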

The $\widehat{\sum}$ here and further denotes a possibly infinite
sum.

Let $n \in \N \cup \infty$. Generalizing \cite{Br}, we define the
Brundan-Kazhdan-Lusztig polynomials $u_{g,f}(q) \in \Z[q],
\ell_{g,f}(q) \in \Z[q^{-1}]$ associated to $f,g \in \Z_+^{\m|n}$
by
\begin{eqnarray}  \label{eq:superul}
U_f =\sum_{g \in \Z_+^{\m|n}} u_{g,f}(q) K_g, \qquad
L_f =\sum_{g \in \Z_+^{\m|n}} \ell_{g,f}(q) K_g.
\end{eqnarray}
Note that $u_{g,f}(q) =\ell_{g,f}(q)=0$ unless $g \preccurlyeq f$,
$u_{f,f}(q) =\ell_{f,f}(q) =1$, and $u_{g,f}(q)\in q\Z[q]$,
$\ell_{g,f}(q) \in q^{-1} \Z[q^{-1}]$ for $g \ne f$.

\begin{remark}\label{dualUK:super}
By studying a certain symmetric bilinear form on
$\widehat{\mathcal E}^{\m|n}$ such that $\langle L_f, U_{-g\cdot
w_0}\rangle =\delta_{f,g}$ for all $f,g \in \Z_+^{\m|n}$, one can
show (as in \cite[2-i,3-c]{Br} for the special cases for
$\m=(1,\ldots,1)$ or $\m =m$) that
\begin{eqnarray} \label{dualbasis}
K_f =\sum_{g \in \Z_+^{\m|n}} u_{-g\cdot w_0,-f\cdot w_0}(q^{-1})
L_g =\sum_{g \in \Z_+^{\m|n}} \ell_{-g\cdot w_0,-f\cdot
w_0}(q^{-1}) U_g,\quad f\in \Z_+^{\m|n}.
\end{eqnarray}
\end{remark}

\begin{remark} \label{rem:general}
Let $\n =(n_1,\ldots, n_r) \in \N^r$ for $r \ge 1$. One can
generalize readily the bar-involution, the monomial and (dual)
canonical bases to the more general space $\mathcal E^{\m|\n}:=
\otimes_{a=1}^s\Lambda^{m_a} \mathbb V \bigotimes
\otimes_{b=1}^r\Lambda^{n_b} \mathbb W.$ The bases are naturally
parameterized by a set denoted by $\Z_+^{\m|\n}$, which is an
obvious generalization of $\Z_+^{\m|n}$.
\end{remark}

\subsection{The truncation map}
\label{trunct-bar}

Let $n$ be finite. Denote by ${\mathcal E}^{\m|n}_+$ the subspace
of ${\mathcal E}^{\m|n}$ spanned by $K_f$, for $f \in
\Z_{++}^{\m|n}$. For $\infty \geq n'>n$, and $f\in\Z^{\m|n'}_{+}$
(respectively $f \in\Z^{\m+n'}_{+}$), we define $f^{(n)}
\in\Z^{\m|n}_{+}$ (respectively $f^{(n)} \in\Z^{\m+n}_{+}$) to be
the restriction of $f$ to $I(m|n)$. We define the {\em truncation
map} to be the $\mathbb Q(q)$-linear map
$${\mathfrak{Tr}}_{n',n}:  \widehat{\mathcal E}^{\m|n'}_+
\longrightarrow \widehat{\mathcal E}^{\m|n}_+,$$
which sends $K_f$ to $K_{f^{(n)}}$ if $f(i) =i $, for all $i \ge
n+1$, and to $0$ otherwise. We will write ${\mathfrak{Tr}}_{n',n}$
as ${\mathfrak{Tr}}_{n}$ when no ambiguity arises.

\begin{proposition} \label{commutativity2}
For $\infty \geq n'>n$, the truncation map ${\mathfrak{Tr}}_{n',
n}: \widehat{{\mathcal E}}^{\m|n'}_+\rightarrow\widehat{{\mathcal
E}}^{\m|n}_+$ commutes with the bar-involution.
\end{proposition}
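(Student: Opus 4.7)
The plan is to reduce the statement to a verification on monomial basis vectors and then use the explicit construction of the bar-involution via the iterated Lusztig quasi-$R$-matrix. By continuity and $\mathbb Q(q)$-antilinearity of both compositions, it suffices to verify $\overline{\mathfrak{Tr}_{n',n}(K_f)} = \mathfrak{Tr}_{n',n}(\overline{K_f})$ on each monomial $K_f$, $f\in\Z_{++}^{\m|n'}$. When $n' = \infty$, factoring $\mathfrak{Tr}_{\infty,n} = \mathfrak{Tr}_{n'',n}\circ\mathfrak{Tr}_{\infty,n''}$ for an arbitrary finite $n'' > n$ reduces everything, via continuity, to the case of finite $n'$.

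For finite $n'$, I would use the presentation $\overline{K_f} = \Theta\cdot\psi(K_f)$, with $\psi$ the componentwise bar on the $s+1$ tensor slots of $\mathcal E^{\m|n'}$ and $\Theta$ an iterated Lusztig quasi-$R$-matrix applied across adjacent pairs of slots (cf.\ \cite[\S2]{Br} and \cite[\S2]{CWZ}). The summands of $\Theta-1$ lie in products of $\mathcal U^-$ and $\mathcal U^+$ on neighbouring slots, and their $E$-components act on the $\Lambda^{n'}\mathbb W$-slot by raising operators $E_a$. I would classify these summands according to whether they leave the prescribed tail $w_{n+1}\wedge\cdots\wedge w_{n'}$ unchanged or perturb it: those that leave it unchanged combine to give precisely the iterated quasi-$R$-matrix used in the construction of the bar-involution on $\mathcal E^{\m|n}$, while those that perturb it produce wedges with $h(i) \ne i$ for some $i > n$, which lie in the kernel of $\mathfrak{Tr}_{n',n}$.

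With this classification in place, I would split into two cases. If $f$ has trivial tail, i.e.\ $f(i) = i$ for $n < i \le n'$, the surviving contributions reconstruct $\overline{K_{f^{(n)}}}$ after truncation, matching the left-hand side. If $f$ has a non-trivial tail, then $\mathfrak{Tr}_{n',n}(K_f) = 0$, so one must also show that $\mathfrak{Tr}_{n',n}(\overline{K_f}) = 0$; this amounts to the subspace of $\widehat{\mathcal E}^{\m|n'}$ spanned by monomials with non-trivial tail being stable under the bar map, which one checks from the same $\Theta$-decomposition combined with the invariance of $\epsilon$-weights under the super Bruhat order.

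The principal obstacle is the careful tracking of how divided powers and nontrivial $K$-coefficients inside $\Theta$ couple the $\Lambda^{m_a}\mathbb V$ slots with the tail of the $\Lambda^{n'}\mathbb W$ slot, in particular in the non-trivial-tail case where the correct bar-stability of the kernel must be extracted from the $\Theta$-expansion. Since the $\m = m$ counterpart of this argument is carried out in detail in \cite{CWZ}, the extension to arbitrary $\m$ is essentially notational: the parabolic data only change how $\Theta$ is distributed across the $\Lambda^{m_a}\mathbb V$ factors, without altering the underlying bookkeeping.
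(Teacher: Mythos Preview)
Your proposal is correct and follows essentially the same approach as the paper: both invoke the quasi-$R$-matrix construction of the bar-involution and defer the detailed computation to \cite[Proposition~2.8]{CWZ}, noting that the passage from $\m=m$ to general $\m$ is purely notational. The only organizational difference is that the paper first reduces to the case $n'=n+1$ (so that the ``tail'' is a single wedge factor), which streamlines the bookkeeping you describe in your two cases, but the substance of the argument is the same.
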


\begin{proof}
It suffices to prove the case $n'=n+1$. The proof of
\cite[Proposition~2.8]{CWZ} for the special case when $\m =m$
using the quasi $R$-matrix carries over to this general situation.
\end{proof}

\begin{corollary}\label{aux43} Let $\infty \geq n'>n$.
\begin{enumerate}
\item $\{U_f \}_{f \in \Z_{++}^{\m|n}}$ (respectively $\{L_f \}_{f
\in \Z_{++}^{\m|n}}$) is a basis for $\widehat{\mathcal
E}^{\m|n}_+$.

\item ${\mathfrak{Tr}}_{n',n}$ sends $U_f \in \widehat{\mathcal
E}^{\m|n'}_+$ to $U_{f^{(n)}}$ if $f(i) =i $ for all $i \ge n+1$,
and to $0$ otherwise.

\item ${\mathfrak{Tr}}_{n',n}$ sends $L_f \in \widehat{\mathcal
E}^{\m|n'}_+$ to $L_{f^{(n)}}$ if $f(i) =i $ for all $i \ge n+1$,
and to $0$ otherwise.

\item For $f, g \in \Z_{++}^{\m|n'}$ such that $f(i) =g(i) =i $
for all $i \ge n+1$, we have
$$u_{g,f} (q) =u_{g^{(n)}, f^{(n)}} (q), \qquad \ell_{g,f} (q) =
\ell_{g^{(n)}, f^{(n)}} (q).$$
\end{enumerate}
\end{corollary}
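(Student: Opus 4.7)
The plan is to derive all four parts from the unique characterization of $U_f$ and $L_f$ in the preceding theorem together with Proposition \ref{commutativity2}. The crucial input is a combinatorial compatibility between the super Bruhat ordering and the truncation map.

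For (1), I would first verify that the super Bruhat ordering on $\Z_+^{\m|n}$ respects the ``$++$'' condition: if $f \in \Z_{++}^{\m|n}$ and $g \in \Z_+^{\m|n}$ with $g \preccurlyeq f$, then $g \in \Z_{++}^{\m|n}$. This reduces to a case-by-case check of the three elementary moves $\downarrow$ at the last positive coordinate, or alternatively to the $\epsilon$-weight description of the ordering in \cite[\S 2b]{Br}. Given this, the triangular expansions $U_f \in K_f + \widehat{\sum}_{g \prec f} q\Z[q] K_g$ and $L_f \in K_f + \widehat{\sum}_{g \prec f} q^{-1}\Z[q^{-1}] K_g$ involve only $K_g$ with $g \in \Z_{++}^{\m|n}$, so the families $\{U_f\}$ and $\{L_f\}$ indexed by $\Z_{++}^{\m|n}$ lie in $\widehat{\mathcal E}^{\m|n}_+$; unitriangularity with respect to the monomial basis $\{K_g\}_{g \in \Z_{++}^{\m|n}}$ of $\widehat{\mathcal E}^{\m|n}_+$ forces them to be bases.

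For (2) and (3), fix $f \in \Z_{++}^{\m|n'}$ and let $u := \mathfrak{Tr}_{n',n}(U_f)$; by Proposition \ref{commutativity2}, $\overline{u} = u$. If $f(i) = i$ for all $i > n$, then $f^{(n)} \in \Z_{++}^{\m|n}$ (since $f(n) < f(n+1) = n+1$ gives $f(n) \leq n$). Applying $\mathfrak{Tr}_{n',n}$ to the expansion of $U_f$ retains precisely those $K_g$ with $g(i) = i$ for all $i > n$; a direct check on the three elementary moves shows that each such $g \preccurlyeq f$ satisfies $g^{(n)} \preccurlyeq f^{(n)}$ in $\Z_+^{\m|n}$, with coefficients in $q\Z[q]$ preserved. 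Thus $u$ meets the defining properties of $U_{f^{(n)}}$ and equals it by uniqueness. In the opposite case, where $f(i_0) < i_0$ for some $i_0 > n$, to conclude $u = 0$ one needs the ``propagation'' statement that every $g \in \Z_+^{\m|n'}$ with $g \preccurlyeq f$ also satisfies $g(i) < i$ for some $i > n$. The argument for $L_f$ is entirely parallel. Part (4) then follows immediately by applying $\mathfrak{Tr}_{n',n}$ to $U_f = \sum_g u_{g,f}(q) K_g$ and $L_f = \sum_g \ell_{g,f}(q) K_g$ and comparing with the expansions of $U_{f^{(n)}}$ and $L_{f^{(n)}}$ obtained from (2) and (3).

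The main obstacle is the propagation lemma just highlighted: a chain $f = h_0 \downarrow \cdots \downarrow h_r = g$ may leave $\Z_+^{\m|n'}$ at intermediate steps, yet one must ensure that a non-saturated positive tail at $f$ persists in $g$. The key observation is that among the three elementary moves, only (a) and (c) affect positive coordinates; move (c) is a transposition preserving the multiset $\{h_k(1), \ldots, h_k(n')\}$, while move (a) decreases a single positive coordinate by $1$. Hence this multiset is weakly decreasing along any $\downarrow$-chain, so an unsaturated tail at $f$ cannot be ``filled in'' at any $g \preccurlyeq f$, which is precisely what is needed.
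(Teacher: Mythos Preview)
Your proposal is correct and follows the expected line; the paper itself gives no proof for this corollary, treating it as an immediate consequence of Proposition~\ref{commutativity2} by analogy with the special cases in \cite{Br, CWZ}. One small simplification is available: in your argument for (2), the auxiliary claim that $g^{(n)} \preccurlyeq f^{(n)}$ is not actually needed, since the uniqueness characterization of $U_{f^{(n)}}$ requires only bar-invariance together with membership in $K_{f^{(n)}} + \widehat{\sum}_{g'} q\Z[q]\, K_{g'}$ (with no Bruhat restriction on the $g'$), and both of these are immediate from Proposition~\ref{commutativity2} and the injectivity of $g \mapsto g^{(n)}$ on the surviving terms. Your ``propagation'' observation---that the sorted multiset of positive values can only weakly decrease along any $\downarrow$-chain---is exactly the right mechanism for the vanishing case, and the downward-closure of $\Z_{++}^{\m|n}$ you isolate for part (1) reappears later in the paper as Lemma~\ref{lem:domin}.
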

\section{The parabolic Brundan-Kazhdan-Lusztig theory for $\mathfrak g\mathfrak l(m|n)$}
\label{sec:superRep}

\subsection{The category $\FPn$}

For $m,n \in \N$ the Lie superalgebra $\mathfrak g =\glmn$ is
generated by the elementary matrices $e_{ij}$, where $i,j \in
I(m|n)$. For $i \in I(m|n)$, let $\bar{i}=\bar{0}$ if $i<0$ and
$\bar{i}=\bar{1}$ if $i>0$.
Let $\mathfrak h$ be the standard Cartan subalgebra of $\mathfrak
g$ consisting of the diagonal matrices, $\mathfrak b$ the standard
Borel subalgebra of the upper triangular matrices, and $\Delta^+$
the set of positive roots for $\mathfrak g$. By means of the
natural inclusion ${\mathfrak{gl}}(m|n) \subseteq
{\mathfrak{gl}}(m|n+1)$ via $I(m|n) \subseteq I(m|n+1)$, we let
${\mathfrak{gl}}(m|\infty):=\lim\limits_{\stackrel{\longrightarrow}{n}}
{\mathfrak{gl}}(m|n)$.

Recall that $\m =(m_1,\ldots,m_s)$ with $\sum_i m_i =m$. Consider
the Levi subalgebra $\mathfrak l :=\mathfrak{gl}(m_1)\oplus \cdots
\oplus \mathfrak{gl}(m_s) \oplus \mathfrak{gl}(n)$ and the
corresponding parabolic subalgebra $\mathfrak p :=\mathfrak l
+\mathfrak b$ of $\mathfrak g$. (We shall occasionally write
$\mathfrak p_n$ if we need to keep track of $n$.)

Let $\{\delta_i| i \in I(m|n)\}$ be the basis of $\mathfrak h^*$
dual to $\{e_{ii}\vert i \in I(m|n)\}$. Let $X_{m|n}$ be the set
of integral weights $\la=\sum_{i \in I(m|n)} \la_i \delta_i$,
$\la_i \in\Z$. A symmetric bilinear form on $\mathfrak h^*$ is
defined by
$$(\delta_i|\delta_j)=-{\rm sgn}(i)\delta_{ij},\qquad i,j \in I(m|n).$$
Define
\begin{align*}
X^+_{\m|n} &:= \{\la \in X_{m|n} \mid
  \la_{-m} \ge \cdots \ge\la_{-m+m_1-1}, \\
& \qquad\quad \la_{-m+m_1} \ge \cdots \ge \la_{-m+m_1+m_2-1},\\
& \qquad\quad \cdots, \la_{-m_s} \ge \cdots \ge \la_{-1}, \la_1
\ge \cdots \ge \la_n\},   \\
X_{\m|n}^{++} &:= \{\la \in X^+_{\m|n} \mid  \la_n \geq 0 \}.
\end{align*}

We may regard an element $\la$ in $X^{++}_{\m|n}$ as an element in
$X^{++}_{\m|n'}$ for $n'>n$ by adjoining zeros, i.e. letting
$\la_{i} =0$ for $n' \ge i \ge n+1$. Let
$$X^{++}_{\m|\infty} \equiv X^{+}_{\m|\infty}
:=\lim\limits_{\stackrel{\longrightarrow}{n}} X^{++}_{\m|n}.
$$
For $n \in \N \cup \infty$ define
$$\rho = -\sum_{i \in I(m|n)} i \delta_i.$$
Define a bijection
\begin{eqnarray}
 X_{m|n} \longrightarrow \Z^{m|n}, \qquad \la \mapsto f_\la,
\end{eqnarray}
where $f_\la \in \Z^{m|n}$ is given by $f_\la (i) = (\la +\rho |
\delta_i)$ for all $i \in I(m|n)$.
This map induces bijections $X^{+}_{\m|n} \rightarrow \Zmn$ and
$X^{++}_{\m|n} \rightarrow \Z_{++}^{\m|n}$. Using this bijection
we define the notions such as the degree of atypicality,
$\epsilon$-weight, partial order $\preccurlyeq$, et cetera, for
elements in $X^+_{\m|n}$ by requiring them to be compatible with
those defined for elements in $\Zmn$.

For $\la \in X_{m|n}$, we define the {\em parabolic Verma module}
to be
$$
K_n(\la) :=U(\mathfrak g) \otimes_{U (\mathfrak p)} L^0_n(\la),$$
where $L^0_n(\la)$ is the irreducible $\mathfrak l$-module of
highest weight $\la$ extended trivially to a $\mathfrak p$-module.
The irreducible quotient $\mathfrak g$-module of $K_n(\la)$ is
denoted by $L_n(\la)$. Let $[M: L_{n}(\la)]$ denote the
multiplicity of the composition factor $L_{n}(\la)$ in a
$\mathfrak g\mathfrak l(m|n)$-module $M$. When $n=\infty$ we will
make it a convention to drop the subscript $n$ in $K_n(\la),
L_n(\la)$ et cetera.

For $n\in \mathbb N $, $\FPn$ is the category of finitely
generated $\mathfrak{gl}(m|n)$-modules $M$, with $M$ semisimple
over $\mathfrak l$, locally finite over $\mathfrak p$, and
\begin{equation*}
M=\bigoplus_{\g\in X_{m|n}}M_\g,
\end{equation*}
where as usual $M_\g$ denotes the $\g$-weight space of $M$ with
respect to $\mathfrak h$. Note that any object in $\FPn$, when
regarded as a module over its even subalgebra, has finite length by
results of the classical category $\mathcal O$, and hence it has
finite length as well. Denote by $\text{Hom}_{\m|n}$ the
$\text{Hom}$ space in the category $\FPn$. We twist the standard
$\mathfrak g$-module structure on the graded dual $M^*$ of such an
$M$ with the automorphism given by the negative supertranspose on
$\mathfrak g$, and denote the resulting $\mathfrak g$-module by
$M^\tau$. We denote by $\FPPn$ the full subcategory of $\FPn$ which
consists of modules whose composition factors are of the form
$L_n(\la)$ for $\la \in X_{\m|n}^{++}$. We let $\mathcal
O^{++}_{\m|\infty}$ be the category of finitely generated
$\mathfrak{gl}(m|\infty)$-modules that are $\mathfrak l$-semisimple,
locally finite over $ \mathfrak{p}_N$ $\subset \mathfrak{gl}(m|N)$
for all finite $N$, and such that the composition factors are of the
form $L(\la)$ for $\la \in X_{\m|\infty}^{+}$.

\subsection{The truncation functor}
\label{subsec:truncat}

Let $\wt(v)$ denote the weight (or $\delta$-weight) of a weight
vector $v$ in a $\mathfrak g\mathfrak l(m|n)$-module.
\begin{definition}\label{def:trunc}
For $n<n' \le \infty$, the {\em truncation functor}
${\mathfrak{tr}}_{n',n}: \mathcal O_{\m|n'}^{++} \longrightarrow
\FPPn$ is the exact functor which sends an object $M$ to
$${\mathfrak{tr}}_{n',n}(M) := \text{span } \{ v \in M \mid (\wt(v)| \delta_{k}) =
0, \text{ for all }n+1 \le k \le n'\}.$$ When $n'$ is clear from
the context we will also write ${\mathfrak{tr}}_{n}$ for
$\mathfrak{tr}_{n',n}$. (It is easily checked that
${\mathfrak{tr}}_{n',n}(M) \in \FPPn$.)
\end{definition}
We have a system  of categories $\FPPn$ with a compatible sequence
of functors ${\mathfrak{tr}}_{n',n}$ in the sense that
${\mathfrak{tr}}_{n'',n} = {\mathfrak{tr}}_{n',n} \circ
{\mathfrak{tr}}_{n'',n'}$ for $n''>n'>n$.

We have the natural inclusions
${\mathfrak g\mathfrak l}(m|n) \subset {\mathfrak g\mathfrak
l}(m|n+1)$. The following is a variant of \cite[Lemma~3.5]{CWZ}
and can be proved similarly.
\begin{lemma} Let $Y =L$ or $K$.
We have the natural inclusions of $\glmn$-modules:
$Y_n(\la) \subseteq Y_{n+1} (\la) \text{ for }\la \in
X^{++}_{\m|n}.$ Furthermore,
$\mathfrak{tr}_{n+1,n}(Y_{n+1}(\la)) =Y_n(\la)$.
\end{lemma}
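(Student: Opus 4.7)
The proof should mirror that of \cite[Lemma~3.5]{CWZ}, generalized from $\m = m$ to arbitrary $\m$. First I would verify the ambient inclusion $\mathfrak p_n \subset \mathfrak p_{n+1}$: the first $s$ Levi blocks $\gl(m_a)$ are identical, $\gl(n) \subset \gl(n+1)$ naturally, and any root between two different blocks of $\mathfrak l_n$ remains between two different blocks of $\mathfrak l_{n+1}$, so $\mathfrak u_n^{\pm} \subset \mathfrak u_{n+1}^{\pm}$. By the classical branching $\gl(n) \downarrow \gl(n+1)$ applied to the last tensor factor of $L^0_{n+1}(\la)$, the $\delta_{n+1}$-weight-zero subspace of $L^0_{n+1}(\la)$ is precisely the top $\gl(n)$-summand (the one indexed by $\mu=(\la_1,\dots,\la_n)$), which is canonically identified with $L^0_n(\la)$; since $\mathfrak u_n^+$ annihilates $L^0_{n+1}(\la)$, this is an inclusion of $\mathfrak p_n$-modules.

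For $Y=K$, Frobenius reciprocity then produces a $\glmn$-map $K_n(\la) \to K_{n+1}(\la)$. Via the PBW decomposition $U(\mathfrak u_{n+1}^-) \cong U(\mathfrak u_n^-) \otimes U(\mathfrak u_{n+1}^- / \mathfrak u_n^-)$, the extra generators (negatives of the odd roots $\delta_i - \delta_{n+1}$, $i<0$) strictly raise the $\delta_{n+1}$-weight, and since $L^0_{n+1}(\la)$ has $\delta_{n+1}$-weights $\geq 0$, the $\delta_{n+1}$-weight-zero subspace of $K_{n+1}(\la)$ is exactly $U(\mathfrak u_n^-) \cdot L^0_n(\la) \cong K_n(\la)$. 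This simultaneously exhibits the inclusion $K_n(\la) \hookrightarrow K_{n+1}(\la)$ and the equality $\mathfrak{tr}_{n+1,n}(K_{n+1}(\la)) = K_n(\la)$.

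For $Y=L$, applying the exact functor $\mathfrak{tr}_{n+1,n}$ to $K_{n+1}(\la) \twoheadrightarrow L_{n+1}(\la)$ and using the $K$-case shows that $\mathfrak{tr}_{n+1,n}(L_{n+1}(\la))$ is a highest-weight quotient of $K_n(\la)$ of highest weight $\la$; equivalently, it is the $\glmn$-submodule $N := U(\glmn) \cdot v_\la \subseteq L_{n+1}(\la)$. The main obstacle is to show $N$ is simple, hence isomorphic to $L_n(\la)$. The key observation that should drive this is that any $\glmn$-singular vector of $\delta_{n+1}$-weight zero in $L_{n+1}(\la)$ is automatically $\gl(m|n+1)$-singular: the positive generators of $\gl(m|n+1)$ not already in $\mathfrak p_n$ all have weight $\delta_j - \delta_{n+1}$ for some $j$, and they annihilate any $\delta_{n+1}$-weight-zero vector (since the result would have $\delta_{n+1}$-weight $-1$, which does not occur in $L_{n+1}(\la)$). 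Simplicity of $L_{n+1}(\la)$ then forces any such singular vector to have weight $\la$, so $N$ admits no proper nonzero $\glmn$-submodule.
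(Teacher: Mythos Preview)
Your argument is correct and follows precisely the approach the paper intends: the paper states this is a variant of \cite[Lemma~3.5]{CWZ} provable in the same way, and your generalization from $\m=m$ to arbitrary $\m$ carries out exactly that adaptation. The only slip is notational: you write ``branching $\gl(n)\downarrow\gl(n+1)$'' where you mean restriction $\gl(n+1)\downarrow\gl(n)$.
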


It follows that $\cup_n K_n(\la)$ and $\cup_n L_n(\la)$ are
naturally $\glsuper$-modules. They are direct limits of
$\{K_n(\la)\}$ and $\{L_n(\la)\}$ and isomorphic to $K(\la)$ and
$L(\la)$, respectively. Similarly $\cup_n L_n^0(\la) \cong
L^0(\la)$ as $\mathfrak l$-modules.

\begin{corollary} \label{lem:trunK}
For $\la \in X^{+}_{\m|n'}, n <n' \le \infty,$ and $Y =L$ or $K$,
we have
\begin{equation*}{\mathfrak{tr}}_{n',n}(Y_{n'} (\la)) =
 \left\{ \begin{array}{rr}
  Y_n(\la), & \quad \text{if }  \la_{i} =0 \;\forall i>n, \\
  0, & \quad \text{otherwise.}
  \end{array} \right.
 \end{equation*}
 \end{corollary}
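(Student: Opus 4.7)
The plan is to split the analysis into two cases, depending on whether $\la_i = 0$ for all $i>n$ or not.

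In the positive case, when $\la_i = 0$ for all $i>n$, I would iterate the preceding lemma. The compatibility $\mathfrak{tr}_{n'',n} = \mathfrak{tr}_{n',n}\circ \mathfrak{tr}_{n'',n'}$ of the truncation functors reduces the problem (for finite $n'$) to applying $\mathfrak{tr}_{k+1,k}$ successively for $k = n'-1, n'-2, \ldots, n$. At each step the hypothesis of the preceding lemma is satisfied (since $\la_{k+1} = 0$, so $\la \in X^{++}_{\m|k+1}$ arises by adjoining a zero to an element of $X^{++}_{\m|k}$), giving $\mathfrak{tr}_{k+1,k}(Y_{k+1}(\la)) = Y_k(\la)$. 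Chaining yields $\mathfrak{tr}_{n',n}(Y_{n'}(\la)) = Y_n(\la)$. For $n' = \infty$, I would use that $\mathfrak{tr}_{\infty,n}$ is defined by a weight-span condition on vectors and hence commutes with the direct union $Y_\infty(\la) = \bigcup_N Y_N(\la)$, reducing to the finite case applied to each $Y_N(\la)$ with $N \gg 0$.

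In the vanishing case, when some $\la_i \neq 0$ for $i > n$—which by dominance $\la_1 \ge \cdots \ge \la_{n'} \ge 0$ forces $\la_{n+1} > 0$—the goal is to show $\mathfrak{tr}_{n',n}(K_{n'}(\la)) = 0$. Since $L_{n'}(\la)$ is a quotient of $K_{n'}(\la)$, the same conclusion then follows for $Y = L$. For a weight $\mu$ of $K_{n'}(\la) = U(\mathfrak g) \otimes_{U(\mathfrak p)} L^0_{n'}(\la)$ and index $k \in [n+1, n']$, the only positive roots of $\glmn$ that contribute to $\mu_k$ are the Levi roots internal to the $\mathfrak{gl}(n')$-block and the non-Levi odd roots $\delta_i - \delta_k$ with $i < 0$, giving
\begin{equation*}
\mu_k = \la_k + \sum_{i=1}^{k-1} c'_{i,k} - \sum_{j=k+1}^{n'} c'_{k,j} + \sum_{i<0} c_{i,k},
\end{equation*}
for non-negative integer coefficients $c', c$. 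Summing over $k \in [n+1, n']$, the Levi contributions with both indices in $[n+1,n']$ telescope and cancel, leaving
\begin{equation*}
\sum_{k=n+1}^{n'} \mu_k \;=\; \sum_{k=n+1}^{n'} \la_k + \sum_{\substack{1 \le i \le n\\ n+1 \le k \le n'}} c'_{i,k} + \sum_{\substack{i<0\\ n+1 \le k \le n'}} c_{i,k} \;\geq\; \sum_{k=n+1}^{n'} \la_k \;>\; 0,
\end{equation*}
using $\la_{n+1}>0$ and $\la_k \ge 0$ for all $k\ge 1$. Hence no weight of $K_{n'}(\la)$ can satisfy $\mu_k = 0$ for every $k \in [n+1, n']$, so $\mathfrak{tr}_{n',n}(K_{n'}(\la)) = 0$. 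The case $n' = \infty$ then follows by applying the same argument verbatim to each $Y_N(\la)$ for $N \gg 0$ and passing to the union.

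The main obstacle is the bookkeeping in the vanishing case: carefully tracing which positive roots of $\glmn$ (Levi versus non-Levi, bosonic versus odd) contribute to the coordinate $\mu_k$ for $k$ in the ``truncated'' range $[n+1, n']$, in the super setting where one must distinguish the $\mathfrak{gl}(n')$-block roots from the cross-block odd roots. Once the formula for $\mu_k$ is set up correctly, the telescoping and the resulting strict positivity of $\sum \mu_k$ are immediate from dominance.
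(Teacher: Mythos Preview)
Your argument is correct and supplies considerably more detail than the paper, which states this as an immediate corollary of the preceding lemma without further proof. The iteration in the positive case is exactly what is intended, and the weight-sum argument in the vanishing case is the natural one: the telescoping is correct and $\sum_{k=n+1}^{n'}\mu_k \ge \sum_{k=n+1}^{n'}\la_k>0$ finishes it.

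One slip to flag: you write ``by dominance $\la_1 \ge \cdots \ge \la_{n'} \ge 0$'', but the last inequality $\la_{n'}\ge 0$ is not part of the stated hypothesis $\la \in X^{+}_{\m|n'}$; it is the stronger condition $\la \in X^{++}_{\m|n'}$. In fact the vanishing claim fails without it: for $m=1$, $n=0$, $n'=2$ and $\la = (\la_{-1}\mid 1,-1) \in X^+_{1|2}\setminus X^{++}_{1|2}$, the $\gl(2)$-irreducible of highest weight $(1,-1)$ contains the zero weight, so $K_2(\la)$ has a weight vector with $\mu_1=\mu_2=0$ and $\mathfrak{tr}_{2,0}(K_2(\la))\neq 0$ even though $\la_1\neq 0$. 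Since the truncation functor is only defined on $\mathcal O^{++}_{\m|n'}$, every subsequent use in the paper takes $\la \in X^{++}_{\m|n'}$, and $X^+_{\m|\infty}=X^{++}_{\m|\infty}$ anyway, this is a typo in the hypothesis rather than a flaw in your method; reading $X^{++}$ for $X^+$, your proof goes through as written.
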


\begin{lemma}  \label{lem:domin}
Let $\la \in X^{++}_{\m|n}$ and $\mu \in X_{\m|n}^+$ be such that
$\mu \preccurlyeq \la$. Then $\mu \in X^{++}_{\m|n}$.
 \end{lemma}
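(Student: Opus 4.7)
The plan is to transfer the statement to the combinatorial model via the bijection $\la\mapsto f_\la$, where $f_\la(i)=-{\rm sgn}(i)\la_i+|i|$, and then verify the claim on the generators of the super Bruhat order. Under this bijection, membership in $X_{\m|n}^{+}$ corresponds to $f_\la\in\Z_+^{\m|n}$, and the extra condition $\la_n\ge 0$ defining $X_{\m|n}^{++}$ translates into the single inequality $f_\la(n)\le n$. So the claim reduces to: for $f_\la,f_\mu\in\Z_+^{\m|n}$ with $f_\mu\preccurlyeq f_\la$ and $f_\la(n)\le n$, one has $f_\mu(n)\le n$.

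By the definition of $\preccurlyeq$ there is a chain $f_\la=h_1\downarrow h_2\downarrow\cdots\downarrow h_r=f_\mu$ in $\Z^{m|n}$. The intermediate $h_k$ need not themselves lie in $\Z_+^{\m|n}$, but this does not matter: it suffices to show that the value at the index $n$ is monotone non-increasing along each elementary move, i.e., that $f\downarrow g$ implies $g(n)\le f(n)$.

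This is a short case analysis on the three types of moves defining $\downarrow$. In case~(1), $g=f-d_i+d_j$ with $i<0<j$ and $f(i)=f(j)$, so $g$ agrees with $f$ outside $\{i,j\}$ and $g(j)=f(j)-1$; hence $g(n)=f(n)$ if $j\ne n$, and $g(n)=f(n)-1$ if $j=n$. In case~(2), $g=f\cdot\tau_{ij}$ with $i<j<0$ touches only negative-indexed entries, so $g(n)=f(n)$. In case~(3), $g=f\cdot\tau_{ij}$ with $0<i<j$ and $f(i)<f(j)$: the constraint $i<j\le n$ forces $i\ne n$, and if $j=n$ then $g(n)=f(i)<f(j)=f(n)$, while otherwise $g(n)=f(n)$. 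In every case $g(n)\le f(n)$.

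Iterating along the chain yields $f_\mu(n)\le f_\la(n)\le n$, so $f_\mu\in\Z_{++}^{\m|n}$ and hence $\mu\in X_{\m|n}^{++}$, as required. There is no real obstacle; the only point to watch is that the intermediate $h_k$ may leave $\Z_+^{\m|n}$, which is why the pointwise numerical inequality $g(n)\le f(n)$---rather than any structural property of $\Z_+^{\m|n}$---is the right invariant to propagate along the chain.
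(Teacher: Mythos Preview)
Your proof is correct and follows essentially the same approach as the paper: both transfer to $\Z_+^{\m|n}$ via $\la\mapsto f_\la$ and analyze the three elementary moves defining $\downarrow$ along a chain from $f_\la$ to $f_\mu$. The only cosmetic difference is the invariant propagated: the paper tracks the multiset $\{f(i)\}_{1\le i\le n}$ (unchanged by cases~(2) and~(3), and with one value lowered in case~(1)), whereas you track the single value $f(n)$ directly; either invariant immediately yields $f_\mu(n)\le n$.
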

\begin{proof}
Recall that the super Bruhat ordering $\succcurlyeq$ is defined to
be the transitive closure of the three cases of dominance
$f\downarrow g$ in Subsection~\ref{subsec:superFock}, where only in
the first case therein the set $\{f(i)\}_{1\le i \le n}$ will be
changed. More precisely, one particular $f(i)$ involved in an
atypical pair is replaced by some smaller integer.

Thus, thanks to $\la\succcurlyeq \mu$, $\{f_\mu(i)\}_{1\le i \le
n}$ is obtained by consecutively lowering the values
$\{f_\la(i)\}_{1\le i \le n}$ (which are involved in atypical
pairs), whence $\mu \in X^{++}_{\m|n}$.
\end{proof}

Given $M \in \FPn$, denote by $[M]$ the corresponding element in
the Grothendieck group $G(\FPn)$ of the category $\FPn$. Corollary
\ref{lem:trunK} and the exactness of the truncation functor
${\mathfrak{tr}}_{n',n}$ implies the following.

\begin{proposition}   \label{samemult}
For $\la, \mu \in X^{++}_{\m|n}$ and $n' \ge n$, we also regard
$\la, \mu \in X^{++}_{\m|n'}$ by adjoining zeros. Then,
$[K_n(\la): L_n(\mu)] =[K_{n'}(\la): L_{n'}(\mu)].$
\end{proposition}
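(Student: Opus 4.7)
The plan is to apply the exact truncation functor $\mathfrak{tr}_{n',n}$ to a composition series of $K_{n'}(\la)$ and compare the resulting expressions in the Grothendieck groups on the two sides. Since the claim for general $n'>n$ follows from iterating the case $n'=n+1$ (using $\mathfrak{tr}_{n'',n}=\mathfrak{tr}_{n',n}\circ\mathfrak{tr}_{n'',n'}$), I will argue directly for arbitrary $n'$.

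First I would note that $K_{n'}(\la)$ has finite length, so
\[
[K_{n'}(\la)] \;=\; \sum_{\nu}\, [K_{n'}(\la):L_{n'}(\nu)]\,[L_{n'}(\nu)]
\]
is a finite sum, where by standard highest weight considerations every composition factor $L_{n'}(\nu)$ has $\nu \preccurlyeq \la$ in the super Bruhat order. Since $\la \in X^{++}_{\m|n} \subseteq X^{++}_{\m|n'}$ (adjoining zeros), Lemma~\ref{lem:domin} forces each such $\nu$ to lie in $X^{++}_{\m|n'}$; in particular $K_{n'}(\la) \in \mathcal O^{++}_{\m|n'}$ and the sum above sits in $G(\mathcal O^{++}_{\m|n'})$.

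Next I would apply the exact functor $\mathfrak{tr}_{n',n}$. Because $\la_i = 0$ for all $i>n$, Corollary~\ref{lem:trunK} gives $\mathfrak{tr}_{n',n}(K_{n'}(\la)) = K_n(\la)$, and applied to a composition factor it yields
\[
\mathfrak{tr}_{n',n}\bigl(L_{n'}(\nu)\bigr) \;=\; \begin{cases} L_n(\nu), & \nu_i = 0 \text{ for all } i>n, \\ 0, & \text{otherwise.} \end{cases}
\]
The surviving $\nu$'s are exactly those obtained by extending elements of $X^{++}_{\m|n}$ by zeros; here the defining condition $\nu_n \ge 0$ is automatic because $\nu_n \ge \nu_{n+1} = 0$. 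Exactness then gives
\[
[K_n(\la)] \;=\; \sum_{\nu \in X^{++}_{\m|n}} [K_{n'}(\la):L_{n'}(\nu)]\,[L_n(\nu)]
\]
in $G(\FPn)$, and reading off the coefficient of $[L_n(\mu)]$ yields the claimed equality.

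I do not foresee any serious obstacle: the proof is pure bookkeeping built on two features of $\mathfrak{tr}_{n',n}$, namely exactness and the explicit dichotomy of Corollary~\ref{lem:trunK}. The one point requiring attention is that the surviving simple composition factors $L_{n'}(\nu)$ are in natural (restriction) bijection with $\{L_n(\nu): \nu \in X^{++}_{\m|n}\}$, with no collapsing and no additional simples appearing; this is precisely what Lemma~\ref{lem:domin} together with the definition of $X^{++}_{\m|n'}$ guarantees.
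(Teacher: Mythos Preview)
Your proof is correct and follows exactly the approach the paper indicates: the paper's proof is the single sentence ``Corollary~\ref{lem:trunK} and the exactness of the truncation functor $\mathfrak{tr}_{n',n}$ implies the following,'' and you have simply written out the details of that deduction. Your explicit invocation of Lemma~\ref{lem:domin} (to ensure all composition factors lie in $X^{++}_{\m|n'}$, so that the functor and Corollary~\ref{lem:trunK} apply) is a nice clarification that the paper leaves implicit.
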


Given $\la \in X^{+}_{\m|k}$, we denote by $\mathfrak J_{k}(\la)$
the set of the highest weights of the composition factors of
$K_{k}(\la)$ and by $r_{k}(\la)$ the length of a composition
series of $K_{k}(\la)$. Clearly, there exists $\,n(\la)\in\N$ such
that the degree of atypicality $\# \la$ (where we regard $\la \in
X^{++}_{\m|n}$ by adjoining zeros) is independent of $n$ for
$\infty \geq n \ge n(\la)$.

\begin{proposition} \label{series}
\begin{enumerate}
 \item
The $r_{n}(\la)$ and $\mathfrak J_n(\la)$ (with the tail of zeros
in a weight ignored) are independent of $n \ge n(\la)$.
Furthermore, for $n'\geq n \ge n(\la)$ the truncation functor
$\mathfrak{tr}_{n',n}$ maps bijectively the set of Jordan-H\"older
series for $K_{n'}(\la)$ to the set of Jordan-H\"older series for
$K_n(\la)$.
 \item
The parabolic Verma module $K(\la)$ for $\la \in
X^{+}_{\m|\infty}$ has a finite composition series, whose
composition factors are of the form $L(\mu)$ with $\mu\in
X^{+}_{\m|\infty}$, and hence, $K(\la)\in\Fi$. Furthermore,
$[K(\la): L(\mu)]=[K_n(\la): L_n(\mu)].$
\end{enumerate}
\end{proposition}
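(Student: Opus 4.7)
Proof proposal: My plan is first to establish a stability statement on composition factors via the super Bruhat ordering, and then to deduce both parts by formal manipulations of the exact truncation functor and direct limits.

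The crucial technical step is: for $n\ge n(\la)$ and any $\mu\in X^+_{\m|n+1}$ with $\mu\preccurlyeq\la$ (viewing $\la$ in $X^{++}_{\m|n+1}$ by adjoining $\la_{n+1}=0$), one has $\mu_{n+1}=0$. To prove this, I would analyze $\epsilon$-weights. Since $f_\la(n+1)=n+1$ and, by the defining property of $n(\la)$, no $i<0$ satisfies $f_\la(i)=n+1$, we have $(\wt^\epsilon(\la),\epsilon_{n+1})=-1$. The super Bruhat ordering preserves $\epsilon$-weights, while its elementary moves (1)--(3) only weakly decrease values appearing in the positive part of $f$. These two facts together force the value $n+1$ to appear exactly once in the positive part of $f_\mu$ and no value strictly greater than $n+1$ to appear. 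Since the positive part of $f_\mu$ is strictly increasing, this gives $f_\mu(n+1)=n+1$, i.e., $\mu_{n+1}=0$.

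For part~(1), given any Jordan-H\"older series $0=M_0'\subset\cdots\subset M_{r'}'=K_{n+1}(\la)$ with quotients $L_{n+1}(\mu_i)$, Lemma~\ref{lem:domin} places each $\mu_i\in X^{++}_{\m|n+1}$, and the stability step gives $(\mu_i)_{n+1}=0$. Exactness of $\mathfrak{tr}_{n+1,n}$ together with Corollary~\ref{lem:trunK} yields the Jordan-H\"older series $0=\mathfrak{tr}(M_0')\subset\cdots\subset\mathfrak{tr}(M_{r'}')=K_n(\la)$ with quotients $L_n(\mu_i)$; combined with Proposition~\ref{samemult} this gives $r_{n+1}(\la)=r_n(\la)$ and $\mathfrak J_{n+1}(\la)=\mathfrak J_n(\la)$, and iteration gives stability for all $n\ge n(\la)$. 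For the bijection, injectivity is immediate: a strict inclusion $N\subsetneq N'$ of submodules stays strict under $\mathfrak{tr}_{n+1,n}$ since $\mathfrak{tr}_{n+1,n}(N'/N)\ne 0$ by exactness and non-vanishing of $\mathfrak{tr}_{n+1,n}$ on each simple factor; surjectivity follows by the explicit lift $\widetilde N_i:=U(\mathfrak{gl}(m|n+1))\cdot N_i\subset K_{n+1}(\la)$ and verifying $\widetilde N_i/\widetilde N_{i-1}\cong L_{n+1}(\mu_i)$ via the stability of composition factors.

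For part~(2), using $K(\la)=\bigcup_n K_n(\la)$ from the lemma preceding Corollary~\ref{lem:trunK}, I would choose a compatible system of Jordan-H\"older series $\{M_\bullet^n\}$ for $n\ge n(\la)$ via the lifts from part~(1), so that $M_i^n=M_i^{n+1}\cap K_n(\la)$. Setting $M_i^\infty:=\bigcup_n M_i^n\subset K(\la)$ produces a finite filtration whose successive quotients are $\bigcup_n L_n(\mu_i)=L(\mu_i)$, yielding a finite composition series of $K(\la)$ with factors $L(\mu_i)$, $\mu_i\in X^+_{\m|\infty}$. Hence $K(\la)\in\Fi$, and the multiplicity identity $[K(\la):L(\mu)]=[K_n(\la):L_n(\mu)]$ is immediate. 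The main obstacle is the stability step: the atypicality-vs-super-Bruhat bookkeeping must be tight enough to ensure that no ``tail-creating'' composition factor of $K_{n+1}(\la)$ appears beyond those lifted from $K_n(\la)$, because everything else in the argument rests on it. Once this is in hand, the rest of the proof is essentially formal, relying only on the exactness of truncation and standard direct-limit manipulations.
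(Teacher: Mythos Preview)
Your proposal is correct and follows essentially the same approach as the paper: the key stability step (that $f_\mu$ agrees with $f_\la$ on the tail for $\mu\preccurlyeq\la$ once $n\ge n(\la)$) is exactly what the paper extracts from the proof of Lemma~\ref{lem:domin}, and your $\epsilon$-weight argument is a clean reformulation of that same observation. The paper is terser---it defers the Jordan--H\"older bijection to the argument of \cite[Lemma~3.8]{CWZ} and handles part~(2) as the case $n'=\infty$ of part~(1)---whereas you spell out the injectivity/surjectivity and the direct-limit construction explicitly, but the substance is the same.
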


\begin{proof}
(1) Let $n \ge n(\la)$. $[K_n(\la): L_n(\mu)] \neq 0$ for some
$\mu$ implies $\mu \preccurlyeq \la$. Thus we have $\mu \in
X^{++}_{\m|n}$ and actually $\mu \in X^{++}_{\m|n(\la)}$ by the
proof of Lemma~\ref{lem:domin}, where indeed $f_\mu(i) =f_\la(i)$
for $i>n(\la)$. Hence the first statement follows by
Proposition~\ref{samemult}. Now the second statement follows from
the first one and Lemma~\ref{lem:trunK} using the same argument as
for \cite[Lemma~3.8]{CWZ}.

(2) follows from the special case of (1) with $n'=\infty$.
\end{proof}

\subsection{The tilting modules}
\label{subsec:tilt}

Throughout this subsection we assume that $n$ is finite. An object
$M \in \FPn$ is said to have a {\em Verma flag} (respectively, a
{\em dual Verma flag}) if it has a filtration of $\mathfrak
g\mathfrak l (m|n)$-modules:
$$0 =M_0 \subseteq \cdots \subseteq M_r =M$$
such that each $M_i/M_{i-1}$ is isomorphic to a parabolic Verma
module $K_n(\la^i)$ (respectively, $K_n(\la^i)^\tau$) for some
$\la^i \in X_{\m|n}^+$. We define $(M :K_n(\mu))$ for $\mu\in
X_{\m|n}^+$ to be the number of subquotients of a Verma flag of
$M$ that are isomorphic to $K_n(\mu)$.
%
The {\em tilting module} associated to $\la\in X_{m|n}^+$ in the
category $\FPn$ is an indecomposable $\glmn$-module $U_n(\la)$
such that
$U_n(\la)$ has a Verma flag with $K_n(\la)$ at the bottom, and
$\text{Ext}^1(K_n(\mu),U_n(\la))=0$ for all $\mu\in X^+_{\m|n}$.
By a parabolic version of \cite{Br2} as in Soergel \cite{So2} for
the usual semisimple Lie algebras, the tilting module $U_n(\la)$ in
the category $\FPn$ exists and is unique. Following \cite{Br2, So2},
the projective cover $P_n(\la)$ of $L_n(\la)$ exists for each
$\la\in X^+_{\m|n}$ and admits a finite Verma flag. The following is
a synthesis of standard results (see \cite{Jan, Br2}) adapted to our
particular setup.

\begin{proposition} \label{flagExt}
\begin{enumerate}
 \item
Let $M$ be a module with a finite Verma flag and $N$ be a module
with a finite dual Verma flag. Then, ${\rm Ext}^i(M,N)=0$ for all
$i>0$.
 \item
Let $N \in \FPn$. Then the following statements are equivalent:
\begin{itemize}
 \item[(a)]
 $N$ has a dual Verma flag;
 \item[(b)]
 ${\rm Ext}^i(K_n(\la),N)=0$ for all $\la \in  X^+_{\m|n}$ and
 all $i>0$;
 \item[(c)]
 ${\rm Ext}^1(K_n(\la),N)=0$ for all $\la \in  X^+_{\m|n}$.
\end{itemize}
 \item
 A tilting module in $\FPn$ has a finite dual Verma flag.
\end{enumerate}
\end{proposition}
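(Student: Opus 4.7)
The plan is to prove the three parts in succession, following the standard highest-weight-category template (cf.\ \cite{Br2, Jan, So2}): part (1) is the technical heart of the argument, while parts (2) and (3) will be essentially formal consequences.

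For part (1), I will argue by dévissage on the Verma flag of $M$ and the dual Verma flag of $N$, using the long exact sequences of $\text{Ext}$ repeatedly to reduce to the fundamental vanishing
\[
\text{Ext}^i_{\m|n}\bigl(K_n(\la),\, K_n(\mu)^\tau\bigr) = 0 \qquad \text{for all } i \ge 1 \text{ and all } \la,\mu \in X^+_{\m|n}.
\]
To prove this, I will take a projective resolution $\cdots \to P^1 \to P^0 \to K_n(\la) \to 0$ by projective covers $P_n(\nu)$, each of which admits a finite Verma flag by the parabolic analogue of \cite{Br2, So2} recalled just before the proposition (so all syzygies again have Verma flags). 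Applying $\text{Hom}_{\m|n}(-,\, K_n(\mu)^\tau)$ and invoking (i) $\text{Hom}_{\m|n}(K_n(\nu'), K_n(\mu)^\tau) = \C\,\delta_{\nu',\mu}$ --- immediate from the fact that $K_n(\nu')$ has simple top $L_n(\nu')$ while $K_n(\mu)^\tau$ has simple socle $L_n(\mu)$ --- together with (ii) the additivity identity $\dim \text{Hom}_{\m|n}(P_n(\nu), K_n(\mu)^\tau) = (P_n(\nu) : K_n(\mu))$ (a BGG-reciprocity-type count on dual Vermas), an Euler-characteristic bookkeeping shows the resulting complex is exact in positive degrees.

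For part (2), the implication (a)$\Rightarrow$(b) is part (1) and (b)$\Rightarrow$(c) is trivial. For (c)$\Rightarrow$(a), I will pass to $N^\tau$ via the contravariant involutive duality $M \mapsto M^\tau$ on $\FPn$: since $\tau$ exchanges $K_n(\la) \leftrightarrow K_n(\la)^\tau$ and yields $\text{Ext}^i_{\m|n}(M,N) \cong \text{Ext}^i_{\m|n}(N^\tau, M^\tau)$, hypothesis (c) translates into $\text{Ext}^1_{\m|n}(N^\tau, K_n(\la)^\tau) = 0$ for all $\la$, and a dual Verma flag on $N$ corresponds under $\tau$ to a Verma flag on $N^\tau$. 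Thus it suffices to prove the parallel theorem characterizing Verma flags, which I would handle by induction on the length of $N^\tau$: pick a maximal weight $\la$ of $N^\tau$, use a highest-weight vector to produce a map $K_n(\la) \to N^\tau$, show it is an embedding and that the quotient still satisfies the Ext$^1$-vanishing hypothesis (using part (1) applied to the extracted $K_n(\la)$), and iterate. Part (3) is then immediate: the defining vanishing property of the tilting module $U_n(\la)$ is precisely condition (c), so part (2) supplies a dual Verma flag, necessarily finite since $U_n(\la)$ has finite length (it even has a finite Verma flag by hypothesis).

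The main obstacle I anticipate is the Ext-vanishing of part (1): the Euler-characteristic computation itself is classical, but it depends crucially on the existence of projective covers $P_n(\nu)$ with genuine Verma flags in our parabolic super setting, which is the content of the parabolic analogue of \cite{Br2, So2} that is imported (but not reproved) just before the proposition. The socle-extension step in the proof of (c)$\Rightarrow$(a) is a close second, in that one must verify the Ext$^1$-vanishing hypothesis is preserved under the quotient step of the induction, which is exactly where part (1) feeds back into the argument.
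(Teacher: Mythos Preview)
Your overall plan matches the paper's: d\'evissage reduces (1) to ${\rm Ext}^i(K_n(\la),K_n(\mu)^\tau)=0$, projectives with Verma flags are the tool, and (2), (3) follow formally. But your execution of the key vanishing has a genuine gap.

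The parenthetical ``so all syzygies again have Verma flags'' does not follow from what you have written. Only the \emph{first} syzygy $Z^0=\ker\bigl(P_n(\la)\to K_n(\la)\bigr)$ visibly has a Verma flag, because $K_n(\la)$ sits at the top of the Verma flag of $P_n(\la)$. For $Z^1=\ker(P^1\to Z^0)$ you would need that kernels of surjections between Verma-filtered modules are again Verma-filtered, and the standard proof of that fact already uses ${\rm Ext}^1(K_n(\nu),K_n(\mu)^\tau)=0$ --- precisely what you are trying to establish. Separately, ``Euler-characteristic bookkeeping'' does not give exactness of the complex ${\rm Hom}_{\m|n}(P^\bullet,K_n(\mu)^\tau)$: it only yields the alternating identity $\sum_i(-1)^i\dim{\rm Ext}^i=\delta_{\la,\mu}$, which does not force the individual ${\rm Ext}^i$ to vanish. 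Your ingredients (i) and (ii) are correct, but they do not by themselves close this gap.

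The paper's remedy is to separate the base case from the inductive step. For $i=1$ it invokes \cite[Lemma~3.6(iii)]{Br2}, a direct argument. Once ${\rm Ext}^1=0$ is known, the induction on $i$ uses only the \emph{first} syzygy: from $0\to Z^0\to P_n(\la)\to K_n(\la)\to 0$ with $Z^0$ Verma-filtered, the long exact sequence squeezes ${\rm Ext}^{i+1}(K_n(\la),K_n(\mu)^\tau)$ between ${\rm Ext}^i(Z^0,K_n(\mu)^\tau)$ (zero by the inductive hypothesis and d\'evissage on the Verma flag of $Z^0$) and ${\rm Ext}^{i+1}(P_n(\la),K_n(\mu)^\tau)=0$. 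No higher syzygies enter, and no circularity arises. Your treatment of (2) and (3) is fine and in line with the paper, which simply cites \cite[Proposition~4.16]{Jan} for (2).
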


\begin{proof}
Part (2) can be proved using (1) exactly as for
\cite[Proposition~4.16]{Jan}. Part (3) follows from (2) (also see
\cite{Br2}).

So it remains to prove (1). Using an induction on the Verma flag
length on $M$
and then an induction on the dual Verma flag length on $N$, it
suffices to show that ${\rm Ext}^i(K_n(\la),K_n(\mu)^\tau)=0$ for
all $\la, \mu$ and $i \ge 1$.

As in \cite[Lemma~3.6~(iii)]{Br2}, we have ${\rm
Ext}^i(K_n(\la),K_n(\mu)^\tau)=0$ with $i=1$.
%
The ${\rm Ext}^i$ vanishing for $i>1$ follows by a standard
induction argument, which we sketch below for the convenience of the
reader. We have an exact sequence
\begin{equation*}
0\rightarrow K\rightarrow P_n(\la)\rightarrow K_n(\la)\rightarrow 0
\end{equation*}
where  $K$ has a finite Verma flag. We get a long exact sequence
\begin{equation*}
\leftarrow {\rm Ext}^{i+1}(P_n(\la),K_n(\mu)^\tau)\leftarrow {\rm
Ext}^{i+1}(K_n(\la),K_n(\mu)^\tau)\leftarrow {\rm
Ext}^i(K,K_n(\mu)^\tau) \leftarrow \cdots.
\end{equation*}
Note that ${\rm Ext}^{i+1}(P_n(\la),K_n(\mu)^\tau)=0$, since
$P_n(\la)$ is projective. By inductive assumption, $ {\rm
Ext}^i(K_n(\nu),K_n(\mu)^\tau) =0$ for all $\nu$, and thus ${\rm
Ext}^i(K,K_n(\mu)^\tau)=0$ by induction on the Verma flag length of
$K$. Hence, ${\rm Ext}^{i+1}(K_n(\la),K_n(\mu)^\tau)=0$.
\end{proof}


\begin{corollary}\label{tiltingdual}
We have $U_n(\la)\cong U_n(\la)^\tau$.
\end{corollary}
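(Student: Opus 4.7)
The plan is to verify that $U_n(\lambda)^\tau$ satisfies the two defining properties of the tilting module $U_n(\lambda)$, and then invoke uniqueness of tilting modules.

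First, recall that $M \mapsto M^\tau$ is an exact, character-preserving, contravariant involution on the finite-length subcategory of $\FPn$ (character-preserving because the twist by the negative supertranspose cancels the sign flip from graded dualization on $\mathfrak{h}$). Consequently $\tau$ preserves indecomposability, fixes every simple up to isomorphism ($L_n(\mu)^\tau \cong L_n(\mu)$), and sends each $K_n(\mu)$ to the dual Verma $K_n(\mu)^\tau$ by definition. Applying $\tau$ to the Verma flag of $U_n(\lambda)$ therefore produces a dual Verma flag of $U_n(\lambda)^\tau$, while applying $\tau$ to the dual Verma flag of $U_n(\lambda)$ guaranteed by Proposition \ref{flagExt}(3) produces a Verma flag of $U_n(\lambda)^\tau$.

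Next, by Proposition \ref{flagExt}(1), the existence of a dual Verma flag on $U_n(\lambda)^\tau$ immediately yields $\mathrm{Ext}^1(K_n(\mu), U_n(\lambda)^\tau) = 0$ for every $\mu \in X^+_{\m|n}$. Since Verma characters are linearly independent, Verma flag multiplicities are character-theoretic invariants, hence $(U_n(\lambda)^\tau : K_n(\mu)) = (U_n(\lambda) : K_n(\mu))$ for all $\mu$; in particular $\lambda$ remains maximal in the Bruhat ordering among $\mu$ with nonzero Verma flag multiplicity, and appears with multiplicity one. The standard observation that a Verma module of maximal weight can be arranged at the bottom of a Verma flag (cf.~\cite{Jan}) then exhibits a Verma flag of $U_n(\lambda)^\tau$ beginning with $K_n(\lambda)$.

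Assembling these pieces, $U_n(\lambda)^\tau$ is indecomposable, admits a Verma flag with $K_n(\lambda)$ at the bottom, and satisfies the $\mathrm{Ext}^1$ vanishing against all parabolic Vermas. By the uniqueness of tilting modules noted just before Proposition \ref{flagExt}, we conclude $U_n(\lambda)^\tau \cong U_n(\lambda)$. The one step needing genuine care is the multiplicity-plus-maximality argument placing $K_n(\lambda)$ at the bottom of the Verma flag of $U_n(\lambda)^\tau$; everything else reduces mechanically to Proposition \ref{flagExt} and the formal behavior of $\tau$.
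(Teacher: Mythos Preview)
Your proof is correct and follows essentially the same approach as the paper's: verify that $U_n(\lambda)^\tau$ satisfies the defining properties of the tilting module (Verma flag with $K_n(\lambda)$ at the bottom, $\mathrm{Ext}^1$ vanishing against all parabolic Vermas, indecomposable) and invoke uniqueness. The only cosmetic differences are that the paper obtains the $\mathrm{Ext}^1$ vanishing by applying $\tau$ directly to the identity $\mathrm{Ext}^1(U_n(\lambda),K_n(\mu)^\tau)=0$ rather than first exhibiting a dual Verma flag on $U_n(\lambda)^\tau$, and it identifies $\lambda$ as the unique maximal weight via $\mathrm{Hom}$ computations from the Soergel construction rather than via the character-preserving property of $\tau$; you are also slightly more careful in explicitly noting that $\tau$ preserves indecomposability.
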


\begin{proof}
We have ${\rm Ext}^1(U_n(\la),K_n(\mu)^\tau)=0$ by
Proposition~\ref{flagExt}~(1), and hence by applying the functor
$\tau$, ${\rm Ext}^1(K_n(\mu),U_n(\mu)^\tau)=0$. By the construction
of tilting modules \cite{So2}, ${\rm Hom}_{{\bf
m}|n}(K_n(\mu),U_n(\la))=0$, for $\mu\succ\la$, and ${\rm Hom}_{{\bf
m}|n}(K_n(\la),U_n(\la))=1$. Thus there are no weights in
$U_n(\la)^\tau$ greater than $\la$, which appears with multiplicity
one. Now $U_n(\la)^\tau$ also has a Verma flag by
Proposition~\ref{flagExt}.  Thus $U_n(\la)^\tau\cong U_n(\la)$ by
uniqueness of tilting modules.
\end{proof}

\subsection{A parabolic version of the Brundan conjecture}
\label{subsec:BrConj}

The same arguments as in \cite{Br2, So2} give us the following:
\begin{eqnarray} \label{eq:tiltduality}
(U_n(\la): K_n(\mu)) =[K_n(-w_0\mu- 2\rho+2\rho_{\mathfrak l}):
L_n(-w_0\la- 2\rho+2\rho_{\mathfrak l})],
\end{eqnarray}
where we recall that $w_0$ is the longest element in the Weyl
group $S_{\m|n}$ of the Levi subalgebra $\mathfrak l$, and
$\rho_{\mathfrak l}$ is half the sum of positive roots of
$\mathfrak l$.

It is well known that each $\la \in X^+_{\m|n}$ (or more generally
$\la \in \mathfrak h^*$) gives rise to a central character
$\chi_\la$. There is a neat characterization of central characters
in terms of $\epsilon$-weights \cite[Lemma~4.18]{Br}: $\chi_\la
=\chi_\mu$ for $\la, \mu \in X^+_{\m|n}$ if and only if
$\wt^\epsilon (f_\la)=\wt^\epsilon (f_\mu).$ It follows that the
category $\FPn$ has a ``block" decomposition $\FPn =\sum_{\g \in
P} \mathcal O^+_\g$.

Let $V$ be the natural $\mathfrak g\mathfrak l (m|n)$-module and
$V^*$ its dual. For $a\in \Z, r \ge 1$ we define the translation
functors $E_a^{(r)}, F_a^{(r)}: \FPn \longrightarrow \FPn$ by
sending $M \in \mathcal O^+_\g$ to
\begin{eqnarray}  \label{eq:transl}
 F_a^{(r)} M &:=& \text{pr}_{\g - r(\epsilon_a
 -\epsilon_{a+1})} (S^rV \otimes M),\nonumber \\
 E_a^{(r)} M &:=& \text{pr}_{\g + r(\epsilon_a
 -\epsilon_{a+1})} (S^rV^* \otimes M).
\end{eqnarray}
By convention, set $F_a =F_a^{(1)}, E_a =E_a^{(1)}.$ Let
$\FnDelta$ be the full subcategory of $\FPn$ consisting of all
modules with Verma flags. Let $G(\FnDelta)_{\mathbb
Q}:=G(\FnDelta)\otimes_\Z{\mathbb Q}$ and let $\mathcal
E^{\m|n}|_{q=1}$ be the specialization of $\mathcal E^{\m|n}$ as
$q\to 1$. Denote the $q\to 1$ specialization of $U_f, K_f$ by
$U_f(1), K_f(1)$ et cetera.

\begin{theorem} \label{superChe=tran}
Let $n \in \N$.
\begin{enumerate}
\item Sending the Chevalley generators $E_a^{(r)}, F_a^{(r)}$
($a\in \Z, r \ge 1$) to the translation functors $E_a^{(r)},
F_a^{(r)}$ defines a $\mathcal U_{q=1}$-module structure on
$G(\FnDelta)_{\mathbb Q}$.

\item The linear map $i: G(\FnDelta)_{\mathbb Q} \rightarrow
\mathcal E^{\m|n}|_{q=1}$, which sends $[K_n(\la)]$ to
$K_{f_\la}(1)$, for each $\la \in X^+_{\m|n}$, is an isomorphism of
$\mathcal U_{q=1}$-modules.
\end{enumerate}
\end{theorem}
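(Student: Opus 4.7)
The plan is to prove both parts simultaneously by analyzing the translation functors on parabolic Verma modules and matching the resulting combinatorics with the action of the Chevalley generators on the monomial basis of $\mathcal E^{\m|n}|_{q=1}$. First, I would check that $E_a^{(r)}, F_a^{(r)}$ preserve $\FnDelta$: via the tensor identity
\begin{equation*}
S^r V \otimes K_n(\la) \;\cong\; U(\mathfrak g)\otimes_{U(\mathfrak p)}\bigl(S^r V \otimes L^0_n(\la)\bigr),
\end{equation*}
one obtains a Verma flag of $S^r V \otimes K_n(\la)$ from a weight-space filtration of $S^r V|_{\mathfrak p}$ combined with the Kostant/PRV-type decomposition of each subquotient tensored with the irreducible $\mathfrak l$-module $L^0_n(\la)$. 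Central character projection preserves the Verma-flag property, so the translation functors descend to $\mathbb Q$-linear endomorphisms of $G(\FnDelta)_{\mathbb Q}$.

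Next I would explicitly compute $F_a^{(r)}[K_n(\la)]$ in the basis $\{[K_n(\mu)]\}$ using the filtration above, and similarly for $E_a^{(r)}$. The Levi $\mathfrak l =\gl(m_1)\oplus\cdots\oplus\gl(m_s)\oplus\gl(n)$ has $s+1$ blocks, and the restriction of $S^r V$ to $\mathfrak l$ factors accordingly; the resulting combinatorial expansion of $F_a^{(r)}[K_n(\la)]$ then matches, term by term, the expansion of $\Delta^s(F_a^{(r)})\cdot K_{f_\la}$ specialized at $q=1$ on the tensor product $\bigotimes_a \Lambda^{m_a}\mathbb V \otimes \Lambda^n \mathbb W$. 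The $\Lambda^n \mathbb W$ factor (as opposed to $\Lambda^n \mathbb V$) is forced by the supersign appearing when moving $V$ past the odd root generators, which at the Fock-space level is encoded in the dual module $\mathbb W$. Granted this matching, the map $i$ is a $\mathbb Q$-linear bijection on the monomial/Verma bases that intertwines the two actions, and statement (1) follows automatically since $i$ is faithful and the $\mathcal U_{q=1}$-relations hold on $\mathcal E^{\m|n}|_{q=1}$.

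The main obstacle is the combinatorial bookkeeping for the matching in the previous paragraph: one must verify that the multiplicities $(F_a^{(r)}K_n(\la) : K_n(\mu))$ agree with the coefficients in $\Delta^s(F_a^{(r)})K_{f_\la}$ at $q=1$. This is the natural parabolic generalization of \cite[Theorem~3.5]{Br} and \cite[Theorem~3.10]{CWZ}; the computation is essentially the same as theirs, with the single difference that the iterated comultiplication $\Delta^s$ (rather than $\Delta^2$ in the case $\m=m$ of \cite{CWZ}) tracks how $S^r V$ splits across the multiple $\mathfrak{gl}(m_a)$-factors of the Levi. As the authors note elsewhere in the paper, once the framework of \cite{Br, CWZ} is in place, this step reduces to careful indexing rather than a new conceptual input, so I would expect to omit most routine details and refer to the arguments in those papers.
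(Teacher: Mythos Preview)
Your proposal is correct and takes essentially the same approach as the paper: both reduce the statement to the corresponding results in \cite{Br} (the paper cites \cite[Theorems~4.28, 4.29]{Br}), observing that the passage from $\m=m$ to general $\m$ amounts to replacing $\Delta^2$ by the iterated comultiplication $\Delta^s$ and tracking the extra Levi blocks. You supply more of the actual mechanics (tensor identity, Verma-flag filtration of $S^rV\otimes K_n(\la)$, matching with the monomial-basis action) than the paper's one-line proof, but the underlying argument is the same.
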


\begin{proof}
This is a straightforward generalization of \cite[Theorems 4.28,
4.29]{Br}, and it can be proved similarly.
\end{proof}

The following is a parabolic version of \cite[Conjecture~4.32]{Br}.

\begin{conjecture} \label{conj:BrKL} [Parabolic
Brundan-Kazhdan-Lusztig Conjecture]
 Let $n \in \N\cup \infty$. The map $i: G(\FnDelta)_{\mathbb Q}
\rightarrow \mathcal E^{\m|n}|_{q=1}$ sends $[U_n(\la)]$ to
$U_{f_\la}(1)$ for each $\la \in X^+_{\m|n}$.
 \linebreak
(The case for $n =\infty$ will be clarified and made plausible by
Theorem~\ref{tiltInf} below.)
\end{conjecture}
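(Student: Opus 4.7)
The plan is to characterize the image of $[U_n(\la)]$ under $i$ by the same two properties that uniquely determine the canonical basis element $U_{f_\la}(1)$: unitriangularity with respect to the super Bruhat ordering on monomials, and invariance under (the specialization of) the bar map of Proposition~\ref{thm:involutionsuper}. The unitriangularity half is essentially already contained in the earlier material. Since $U_n(\la)$ has a finite Verma flag by Proposition~\ref{flagExt}(3) and Corollary~\ref{tiltingdual}, with $K_n(\la)$ at the bottom appearing with multiplicity one, and since formula \eqref{eq:tiltduality} together with Lemma~\ref{lem:domin} forces every other Verma subquotient $K_n(\mu)$ to satisfy $\mu \prec \la$ within the same central-character block, we immediately obtain $i([U_n(\la)]) = K_{f_\la}(1) + \sum_{\mu\prec\la} c_{\la\mu}\, K_{f_\mu}(1)$ with $c_{\la\mu}\in\N$. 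This matches the triangular half of the defining property of $U_{f_\la}$.

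The core difficulty is therefore to equip $G(\FnDelta)_{\mathbb Q}$ with an intrinsic involution compatible via $i$ with the bar map on $\mathcal E^{\m|n}|_{q=1}$ and then to verify that $[U_n(\la)]$ is fixed by it. A natural candidate comes from the contravariant duality $\tau$ and the self-duality $U_n(\la)\cong U_n(\la)^\tau$ of Corollary~\ref{tiltingdual}: one sends $[K_n(\mu)]$ to $\sum_\nu [K_n(\mu):L_n(\nu)]\,[K_n(\nu)^\tau]$ and then re-expresses each $[K_n(\nu)^\tau]$ in the Verma basis. I would verify this gives an honest anti-linear involution by checking (a) that it commutes with the translation action $E_a^{(r)}, F_a^{(r)}$ via Theorem~\ref{superChe=tran}, and (b) that it fixes $[K_n(\la)]$ for $\la$ minimal in the super Bruhat order --- such a weight is typical with no larger comparable weight in its block, so $K_n(\la)=L_n(\la)=U_n(\la)$ is simultaneously projective, tilting and injective, and bar-invariance is automatic. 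These two properties together with the normalization in Proposition~\ref{thm:involutionsuper}(1) then force the algebraic bar on the Fock-space side to match the categorical one.

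Granting that step, the identification $[U_n(\la)]\mapsto U_{f_\la}(1)$ for general $\la$ should follow by propagation: using the $\mathfrak{sl}_2$-categorification of Chuang-Rouquier adapted in Section~\ref{sec:categorification} to $\FPn$, show that the indecomposable summand of $F_{a_1}^{(r_1)}\cdots F_{a_k}^{(r_k)}U_n(\la^0)$ (with $\la^0$ a minimal seed in the block) corresponds, as a class in $G(\FnDelta)_{\mathbb Q}$, to the canonical basis element produced by the same string of divided-power Chevalley generators acting on $K_{f_{\la^0}}(1)$ in $\mathcal E^{\m|n}$. The positivity result Theorem~\ref{th:BrConj} is precisely what guarantees that this expansion is a \emph{finite} sum of monomials and so picks out a well-defined canonical basis vector. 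The case $n=\infty$ then reduces to the finite case by limit: combine the canonical-basis truncation compatibility of Corollary~\ref{aux43}(2) with the tilting-module stability established in Section~\ref{sec:superRep}, which asserts $\mathfrak{tr}_{n',n}(U_{n'}(\la))=U_n(\la)$ once $n$ exceeds a bound depending only on $\#\la$.

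The principal obstacle is the second step: constructing the categorical bar-involution and identifying it with the algebraic one, or equivalently producing, for each tilting $U_n(\la)$, a resolution whose Euler characteristic computes the bar-involute of $[K_n(\la)]$. In the classical (non-super) parabolic type $A$ setting this is supplied by BGG-type resolutions and ultimately by Beilinson-Bernstein / Brylinski-Kashiwara localisation; in the super setting no geometric replacement is presently available. This is precisely why the paper only establishes the conjecture under a regularity condition on $\la$, or in the small-rank special cases $\m=(1,1)$ (arbitrary $n$) and $\m=(m,1)$ with $n=1$ where combinatorial control over the super Bruhat order is sharp enough to verify bar-invariance by hand.
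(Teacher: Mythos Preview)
The statement is a \emph{conjecture}, left open in general; the paper establishes it only in the regular case (Theorem~\ref{formula:canonical}), for $\m=(1,1)$ (Theorem~\ref{canonical11n}), and for $\m=(m,1)$, $n=1$ (Theorem~\ref{main:m11}). Your proposal is essentially an outline of why a direct attack should fail, but the scheme itself contains a more basic flaw than the one you flag at the end.

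The characterization ``bar-invariant and unitriangular'' does not survive specialization to $q=1$. The bar map on $\widehat{\mathcal E}^{\m|n}$ is anti-linear with respect to $q\mapsto q^{-1}$ and is built from the quasi-$R$-matrix, which degenerates to $1$ at $q=1$; on each wedge factor the monomials are already bar-fixed. Hence at $q=1$ the bar map is the \emph{identity}, every element of $\mathcal E^{\m|n}|_{q=1}$ is bar-invariant, and your two conditions collapse to unitriangularity alone, which is already known for $i([U_n(\la)])$ and determines nothing. The canonical basis is a genuinely quantum object: to match $i([U_n(\la)])$ with $U_{f_\la}(1)$ one must either lift $\FPn$ to a $\Z$-graded category so that $q$ reappears as a grading shift (the BGS Koszul machinery alluded to in Remark~\ref{rem:folklore} for the $n=0$ case, presently unavailable for $\gl(m|n)$), or compute both sides independently. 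Your proposed categorical involution via $\tau$ collapses for the same reason: in the Grothendieck group $[M^\tau]=[M]$, since $\tau$ is exact and fixes simples, so the map you write down is the identity.

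The paper's method in the proved cases is entirely different and never invokes bar-invariance on the representation side. It produces, case by case, an explicit monomial $X$ in the divided-power translation functors together with a simpler weight $h$ such that $XU_h=U_f$ on the Fock side; knowing inductively that $i([U_n(h)])=U_h(1)$, Theorem~\ref{superChe=tran} and Lemma~\ref{sum:tilt} give $i([XU_n(h)])=U_f(1)$, and one then argues directly (via Proposition~\ref{3terms}, Theorem~\ref{cr:simplicity}, or ad hoc socle analysis) that $XU_n(h)$ is indecomposable, hence equals $U_n(f)$.
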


Conjecture~\ref{conj:BrKL} can be equivalently reformulated as
either of the following conjectural identities, in light of
(\ref{dualbasis}), (\ref{eq:tiltduality}), and
Theorem~\ref{superChe=tran}: for $\la, \mu \in X^+_{\m|n}$,
\begin{eqnarray*}
(U_n(\la): K_n(\mu)) &=& u_{\mu,\la}(1),  \\
{[}K_n(\la): L_n(\mu)] &=& u_{-w_0\la- 2\rho+2\rho_{\mathfrak l}, -w_0\mu-2\rho+2\rho_{\mathfrak l}}(1),  \\
\text{ch} L_n(\la) &=& \sum_{\mu \in X^+_{\m|n}}
\ell_{\mu,\la}(1)\, \text{ch} K_n(\mu).
\end{eqnarray*}
We note that the validity of \cite[Conjecture~4.32]{Br} would imply
Conjecture \ref{conj:BrKL}.

\begin{remark} \label{rem:genO}
Let $\n =(n_1,\ldots, n_r) \in \N^r$ with $n=\sum_{b=1}^r n_b$.
One can formulate the more general category $\mathcal O_+^{\m|\n}$
of $\gl(m|n)$-modules which are semisimple over $\oplus_{a=1}^s
\gl(m_a) \oplus \oplus_{b=1}^r\gl(n_b)$. All the statements on
tilting modules and the Brundan-Kazhdan-Lusztig Conjecture in
Subsections~\ref{subsec:tilt} and \ref{subsec:BrConj} can be
readily generalized to this more general setup (cf.
Remark~\ref{rem:general}). Brundan's conjecture \cite{Br} was
formulated for the full category $\mathcal O$, i.e. when all $m_a$
and $n_b$ are equal to $1$.

On the other hand, the BKL conjecture on the irreducible
characters in any parabolic category would follow from the
validity of the corresponding Brundan's conjecture for the full
category $\mathcal O$ (using the same argument as for the usual
Lie algebras of type $A$).
\end{remark}
\subsection{Tilting modules with $n$ varied}
\label{subsec:inftytilt}

\begin{proposition}\label{nestedtilt3}
For $\la\in X^{++}_{m|n+1}$ the truncation functor
${\mathfrak{tr}}_n$ sends $U_{n+1} (\la)$ to $ U_n(\la)$ if
$(\la|\delta_{n+1})=0$, and to $0$ otherwise.
\end{proposition}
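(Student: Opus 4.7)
The plan is to split the argument into two cases based on whether $(\la|\delta_{n+1}) = 0$. In both cases the main tool is the exactness of $\mathfrak{tr}_n$ applied to the Verma flag of $U_{n+1}(\la)$, combined with Corollary~\ref{lem:trunK}.

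For Case~1, $(\la|\delta_{n+1}) \ne 0$, I would first observe that every Verma $K_{n+1}(\mu)$ occurring in the Verma flag of $U_{n+1}(\la)$ satisfies $\mu \preccurlyeq \la$. By inspection of the proof of Lemma~\ref{lem:domin}, the multiset $\{f_\mu(i)\}_{1 \le i \le n+1}$ is obtained from $\{f_\la(i)\}_{1 \le i \le n+1}$ by a sequence of atypical lowerings, so $f_\mu(n+1) \le f_\la(n+1) < n+1$, equivalently $\mu_{n+1} \ge \la_{n+1} > 0$. Corollary~\ref{lem:trunK} then yields $\mathfrak{tr}_n K_{n+1}(\mu) = 0$ for every such $\mu$, and exactness of $\mathfrak{tr}_n$ on the Verma flag gives $\mathfrak{tr}_n U_{n+1}(\la) = 0$.

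For Case~2, $(\la|\delta_{n+1}) = 0$, set $T := \mathfrak{tr}_n U_{n+1}(\la)$. First, by exactness and Corollary~\ref{lem:trunK}, $T$ inherits a Verma flag whose surviving factors are $K_n(\mu)$ for $\mu \preccurlyeq \la$ with $\mu_{n+1} = 0$; in particular $K_n(\la)$ sits at the bottom. Next, a direct weight-space check shows that $\mathfrak{tr}_n$ commutes with the duality $\tau$, so from $U_{n+1}(\la)^\tau \cong U_{n+1}(\la)$ (Corollary~\ref{tiltingdual}) I obtain $T \cong T^\tau$, and therefore $T$ has a dual Verma flag as well (either by applying $\tau$ to the Verma flag of $T$, or via Proposition~\ref{flagExt}(2)). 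Consequently $T$ decomposes as a direct sum of indecomposable tilting modules $U_n(\nu)$; since $\la$ is the maximal weight of $T$ with $\dim T_\la = 1$, exactly one such summand is $U_n(\la)$, and I can write $T = U_n(\la) \oplus T'$ with $T' = \bigoplus_{\nu \prec \la} U_n(\nu)^{m_\nu}$.

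The hard part will be showing $T' = 0$. My strategy is to compare Verma multiplicities: by exactness of $\mathfrak{tr}_n$, $(T : K_n(\mu)) = (U_{n+1}(\la) : K_{n+1}(\mu^+))$, where $\mu^+$ denotes the extension of $\mu \in X^{++}_{\m|n}$ to an element of $X^{++}_{\m|n+1}$ by appending a zero. Applying \eqref{eq:tiltduality} at both levels converts this to a comparison of composition multiplicities $[K_k(-w_0\nu - 2\rho + 2\rho_{\mathfrak l}) : L_k(-w_0\la - 2\rho + 2\rho_{\mathfrak l})]$ for $k = n$ versus $k = n+1$, which one hopes to handle via Proposition~\ref{samemult}. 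The main obstacle is the bookkeeping involved: $w_0$, $\rho$, and $\rho_{\mathfrak l}$ all depend on $k$, so one must carefully track the $\delta_{n+1}$-components of the Ringel-twisted weights to verify that Proposition~\ref{samemult} indeed applies (the naive computation shows the twisted weights are not simply zero-extensions of one another, but a shift-like identification seems to hold). A possible alternative, sidestepping the multiplicity comparison, would be to prove that the functoriality-induced map $\text{End}_{\gl(m|n+1)}(U_{n+1}(\la)) \to \text{End}_{\gl(m|n)}(T)$ is surjective, so that $\text{End}(T)$ is automatically local and $T$ is indecomposable, forcing $T' = 0$ directly.
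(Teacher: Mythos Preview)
Your Case~1 and the setup of Case~2 (exactness, Verma flag survives with $K_n(\la)$ at the bottom, $\mathfrak{tr}_n$ commutes with $\tau$, hence $T$ has a dual Verma flag and is a sum of tiltings) match the paper's argument essentially verbatim.

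The discrepancy is in how you finish. Your primary strategy (a), comparing $(T:K_n(\mu))$ with $(U_n(\la):K_n(\mu))$ via \eqref{eq:tiltduality} and Proposition~\ref{samemult}, runs into the obstacle you yourself flag: the Ringel-twisted weight $-w_0\la-2\rho+2\rho_{\mathfrak l}$ need not lie in $X^{++}_{\m|n}$ (its last coordinate can be arbitrarily negative when $\la_1$ is large), so Proposition~\ref{samemult} does not apply directly. One could try to salvage this with the shift by $k\,{\bf 1}_{m|n}$ as in the proof of Theorem~\ref{th:BrConj}, but the interaction of that shift with the level change $n\to n+1$ and with $w_0$, $\rho$, $\rho_{\mathfrak l}$ is delicate and you have not carried it out. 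As written, (a) is incomplete.

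Your ``possible alternative'' (b) is exactly what the paper does, and is the clean route. The paper proves that
\[
\text{Hom}_{\m|n+1}(M,N)\longrightarrow \text{Hom}_{\m|n}(\mathfrak{tr}_nM,\mathfrak{tr}_nN)
\]
is surjective whenever $M$ has a Verma flag and $N$ a dual Verma flag, by a double induction on the flag lengths (using Lemma~\ref{lem:domin}) which reduces to the base case $M=K_{n+1}(\la)$, $N=K_{n+1}(\mu)^\tau$, where it is immediate from \eqref{eq:onezero}. Applying this with $M=N=U_{n+1}(\la)$ (which has both a Verma and a dual Verma flag) shows that $\text{End}_{\m|n}(T)$ is a quotient of the local ring $\text{End}_{\m|n+1}(U_{n+1}(\la))$, hence local, hence $T$ is indecomposable and $T'=0$. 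So you should simply promote (b) to the main argument and drop (a).
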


\begin{proof}
By the construction of tilting modules (cf. \cite{So2, Br2}),
$U_{n+1}(\la)$ has a Verma flag with subquotients isomorphic to
$K_{n+1}(\mu)$ with $\mu \preccurlyeq \la$. If
$(\la|\delta_{n+1})>0$, then $(\mu|\delta_{n+1})>0$ and thus
${\mathfrak{tr}}_n(U_{n+1}(\la))=0$ by Lemma~\ref{lem:trunK}.

Thanks to Lemma~\ref{lem:domin}, the truncation functor
${\mathfrak{tr}}_n$ preserves Verma flags. It follows from the
commutativity of $\tau$ with ${\mathfrak{tr}}_n$ and
Proposition~\ref{flagExt} that ${\mathfrak{tr}}_n$ also preserves
the dual Verma flags. By Proposition~\ref{flagExt}, ${\rm
Ext}^1(K_n(\mu), {\mathfrak{tr}_n}(U_{n+1} (\la))=0$. If
$(\la|\delta_{n+1})=0$, then ${\mathfrak{tr}_n}(K_{n+1} (\la))=K_{n}
(\la)$ and clearly $K_n(\la)$ sits at the bottom of
${\mathfrak{tr}_n}(U_{n+1} (\la))$.

To show that ${\mathfrak{tr}_n}(U_{n+1}(\la)) =U_n(\la)$, it remains
to show that ${\mathfrak{tr}_n}(U_{n+1}(\la))$ is indecomposable.
Indeed, this follows by the same argument for
\cite[Proposition~1.5]{Don} with the help of
Proposition~\ref{flagExt}. We recall here that the counterpart in
our setup of (\cite[Proposition~1.5]{Don} states that
$\text{Hom}_{\m|n+1}(M,N) \rightarrow \text{Hom}_{\m|n}
(\mathfrak{tr}_nM,\mathfrak{tr}_nN)$ is surjective, for $M$
(respectively $N$) with a finite Verma (respectively dual Verma)
flag. Its proof is elementary and uses induction on the (dual) Verma
length, Lemma~\ref{lem:domin}, and the standard fact that
\begin{eqnarray}  \label{eq:onezero}
\text{Hom}_{\m|n} (K_n(\la),K_n(\mu)^\tau) \cong \delta_{\la,\mu}
\C.
\end{eqnarray}
Thus, $\text{End}_{\m|n} (\mathfrak{tr}_n U_{n+1}(\la))$, as a
quotient of the local $\C$-algebra
$\text{End}_{\m|n+1}(U_{n+1}(\la))$, is local. This implies that
${\mathfrak{tr}_n}(U_{n+1}(\la))$ is indecomposable.
\end{proof}

\begin{proposition}   \label{truntilt}
For $\la, \mu \in X^{++}_{\m|n}$ and $n' \ge n$, we also regard
$\la, \mu \in X^{++}_{\m|n'}$ by adjoining zeros. Then,
$(U_n(\la): K_n(\mu)) =(U_{n'}(\la): K_{n'}(\mu)).$
\end{proposition}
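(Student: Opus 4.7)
The plan is to prove the statement by induction on $n' - n$, so it suffices to handle the case $n' = n+1$. For the rest of the argument, write $\widetilde{\la}, \widetilde{\mu} \in X^{++}_{\m|n+1}$ for the extensions of $\la, \mu$ by adjoining a zero at position $n+1$. The key input is Proposition~\ref{nestedtilt3}: since $(\widetilde{\la} \mid \delta_{n+1}) = 0$, the truncation functor satisfies $\mathfrak{tr}_n(U_{n+1}(\widetilde{\la})) = U_n(\la)$.

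The strategy is to push a Verma flag of $U_{n+1}(\widetilde{\la})$ down through $\mathfrak{tr}_n$ and compare subquotient multiplicities. Concretely, I would fix a filtration
\[
0 = M_0 \subseteq M_1 \subseteq \cdots \subseteq M_r = U_{n+1}(\widetilde{\la})
\]
with $M_i/M_{i-1} \cong K_{n+1}(\nu^i)$ and $\nu^i \preccurlyeq \widetilde{\la}$; Lemma~\ref{lem:domin} ensures each $\nu^i \in X^{++}_{\m|n+1}$. Applying the exact functor $\mathfrak{tr}_n$ and invoking Corollary~\ref{lem:trunK} shows that the induced filtration of $U_n(\la)$ has $i$-th subquotient equal to $K_n(\nu^i|_{I(m|n)})$ when $\nu^i_{n+1} = 0$, and zero otherwise. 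Discarding the zero subquotients yields a Verma flag of $U_n(\la)$ in which the multiplicity of $K_n(\mu)$ equals the number of indices $i$ with $\nu^i_{n+1} = 0$ and $\nu^i|_{I(m|n)} = \mu$; but these two conditions together force $\nu^i = \widetilde{\mu}$. Hence $(U_n(\la) : K_n(\mu)) = (U_{n+1}(\widetilde{\la}) : K_{n+1}(\widetilde{\mu}))$, closing the induction.

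There is no real obstacle: the only substantive inputs, namely that $\mathfrak{tr}_n$ preserves Verma flags with the expected subquotients and that $\mathfrak{tr}_n(U_{n+1}(\widetilde{\la})) = U_n(\la)$, have already been established in Corollary~\ref{lem:trunK}, Lemma~\ref{lem:domin}, and Proposition~\ref{nestedtilt3}. The well-definedness of the Verma flag multiplicities $(M : K_n(\mu))$ is standard and can be invoked without comment.
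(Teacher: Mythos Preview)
Your proof is correct and follows essentially the same approach as the paper's. The paper's own proof is a one-line reference back to the analogous argument for Proposition~\ref{samemult}, explicitly invoking Lemma~\ref{lem:domin} and Proposition~\ref{nestedtilt3}; you have simply written out the details of that argument, pushing a Verma flag through the exact truncation functor and using Corollary~\ref{lem:trunK} to identify the surviving subquotients.
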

\begin{proof}
The proof is similar to the proof for Proposition~\ref{samemult},
now with the help of Lemma~\ref{lem:domin} and
Proposition~\ref{nestedtilt3}.
\end{proof}

\begin{theorem} \label{tiltInf}
Let $\la\in X^{+}_{\m|\infty}$.
\begin{enumerate}
\item There exists a unique (up to isomorphism) tilting module
$U(\la)$ in $\Fi$ with $K(\la)$ sitting at the bottom of a Verma
flag. Moreover, $U(\la) = \cup_n U_n(\la)$.

\item The functor ${\mathfrak{tr}}_n$ sends $U (\la)$ to
 $ U_n(\la)$ if $(\la|\delta_{n+1})=0$ and to $0$ otherwise.

\item We have $(U(\la):K(\mu))=(U_n(\la):K_n(\mu))$ for $n\gg0$.

\item The Verma flag length for $U(\la)$ and $U_n(\la)$ for $n\gg
0$ is the same (and finite).
\end{enumerate}
\end{theorem}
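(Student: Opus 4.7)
The plan is to construct $U(\la)$ as the direct limit of the $U_n(\la)$ and verify each of the four parts by passing to the limit. Using Proposition~\ref{nestedtilt3}, the equality $\mathfrak{tr}_n(U_{n+1}(\la))=U_n(\la)$ (for $\la\in X^{++}_{\m|n+1}$ with $(\la|\delta_{n+1})=0$) realizes $U_n(\la)$ as a $\glmn$-submodule of $U_{n+1}(\la)$ via the weight-subspace definition of $\mathfrak{tr}_n$; I would define $U(\la):=\bigcup_n U_n(\la)$ as the resulting $\glsuper$-module. To place $U(\la)\in\Fi$, I would verify $\mathfrak l$-semisimplicity, local $\mathfrak p_N$-finiteness, and that the composition factors are of the form $L(\mu)$ with $\mu\in X^+_{\m|\infty}$; all three lift from the $U_n(\la)$ using the identifications $L(\mu)=\bigcup_n L_n(\mu)$ from the lemma preceding Corollary~\ref{lem:trunK}.

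Parts (2) and (3) then follow quickly. For Part (2), $\mathfrak{tr}_n$ commutes with direct limits, and iterated application of Proposition~\ref{nestedtilt3} together with the fact that for $\la\in X^+_{\m|\infty}$ the condition $(\la|\delta_{n+1})=0$ forces $(\la|\delta_i)=0$ for all $i\ge n+1$ (by dominance) gives $\mathfrak{tr}_n(U_k(\la))=U_n(\la)$ for all $k>n$ in that case, and $0$ otherwise. Part (3) is Proposition~\ref{truntilt} in the limit. For the Verma flag in Part (1), I would choose compatible Verma flags of the $U_n(\la)$ level by level: by Lemma~\ref{lem:domin} and Corollary~\ref{lem:trunK}, $\mathfrak{tr}_n$ restricts a Verma flag of $U_{n+1}(\la)$ to one of $U_n(\la)$, with each step either surviving as some $K_n(\mu)$ or collapsing, and taking the direct limit yields a Verma flag for $U(\la)$ with $K(\la)$ at the bottom. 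Ext-vanishing $\mathrm{Ext}^1(K(\mu),U(\la))=0$ reduces via the exhaustion to the finite-$n$ vanishing from Proposition~\ref{flagExt}. Uniqueness of $U(\la)$ follows from indecomposability: any $\glsuper$-endomorphism of $U(\la)$ preserves each $U_n(\la)$ (by weight-space considerations) and, since each $\mathrm{End}(U_n(\la))$ is local (with maximal ideal of nilpotents), the endomorphism is either an isomorphism on every $U_n(\la)$ (yielding an iso of $U(\la)$) or a non-unit on every $\mathrm{End}(U_n(\la))$ (yielding a non-unit of $\mathrm{End}(U(\la))$), so $\mathrm{End}(U(\la))$ is local.

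The main obstacle is Part (4): finiteness of the Verma flag length of $U(\la)$ and its stabilization to $r_n(\la)$ for $n\gg 0$. Since $\mathfrak{tr}_n$ embeds the Verma flag of $U_n(\la)$ into that of $U_{n+1}(\la)$, the length $r_n(\la)$ is nondecreasing in $n$, and Proposition~\ref{truntilt} gives stability of each individual multiplicity $(U_n(\la):K_n(\mu))$ with $\mu\in X^+_{\m|\infty}$; what remains is to rule out indefinite appearance of ``boundary'' Verma factors $K_n(\mu)$ with $\mu_n>0$ as $n$ grows. My approach would apply the tilting-Verma duality $(\ref{eq:tiltduality})$ to recast $(U_n(\la):K_n(\mu))=[K_n(-w_0\mu-2\rho+2\rho_{\mathfrak l}):L_n(-w_0\la-2\rho+2\rho_{\mathfrak l})]$, and then invoke Proposition~\ref{series} after absorbing the $n$-dependence of $-2\rho+2\rho_{\mathfrak l}$ (which, on the positive block, is a constant shift) via central-character invariance. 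The delicate step is checking that, under this normalization, Proposition~\ref{series}'s stability applies uniformly to produce a bound on the support of Verma flag multiplicities and hence the required stabilization of $r_n(\la)$.
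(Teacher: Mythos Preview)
Your treatment of Parts (1)--(3) matches the paper's approach: define $U(\la)=\bigcup_n U_n(\la)$, use Proposition~\ref{nestedtilt3} iterated for Part~(2), and invoke Proposition~\ref{truntilt} for Part~(3). The paper simply cites \cite[Theorem~3.16]{CWZ} for the details you spell out.

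For Part~(4), however, your detour through the duality (\ref{eq:tiltduality}) is both unnecessary and problematic. The dual weight $-w_0\la-2\rho+2\rho_{\mathfrak l}$ depends on $n$ in an essential way: on the positive block its $i$-th coordinate is $-\la_{n+1-i}+(n+1)$, so passing from $n$ to $n+1$ does not simply adjoin a zero but shifts all entries and prepends a new one. This means the family $\{-w_0\la-2\rho+2\rho_{\mathfrak l}\}_n$ is not of the form to which Proposition~\ref{series} applies, and ``central-character invariance'' does not repair this, since Proposition~\ref{series} is a statement about specific weights, not about central characters.

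The paper's argument (``similar to the proof of Proposition~\ref{series}'') is direct and avoids the duality entirely. By the construction of tilting modules (cf.\ \cite{So2,Br2}, and as used in the proof of Proposition~\ref{nestedtilt3}), every $\mu$ with $(U_n(\la):K_n(\mu))\neq 0$ satisfies $\mu\preccurlyeq\la$. The proof of Lemma~\ref{lem:domin} then shows not only $\mu\in X^{++}_{\m|n}$ but in fact $f_\mu(i)=f_\la(i)$ for all $i>n(\la)$, exactly as in the proof of Proposition~\ref{series}(1). Hence no ``boundary'' factors $K_n(\mu)$ with $\mu_i>0$ for $i>n(\la)$ ever occur, the truncation $\mathfrak{tr}_{n,n(\la)}$ loses no Verma subquotient, and the Verma flag length of $U_n(\la)$ equals that of $U_{n(\la)}(\la)$ for all $n\ge n(\la)$. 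This is the missing step you were looking for, and it requires nothing beyond the super Bruhat order control already established.
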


\begin{proof}
We define $U(\la)$ to be $\cup_n U_n(\la)$. The same proof for
\cite[Theorem~3.16]{CWZ} applies here to prove (1) and (2), with
the help of Proposition~\ref{truntilt} above. (3) and (4) follow
by an argument similar to the proof of Proposition~\ref{series}.
\end{proof}

\begin{remark}
Conjecture~\ref{conj:BrKL} as $n$ varies is compatible with the
properties of truncation maps and the truncation functors (cf.
Corollary~\ref{aux43} and Proposition~\ref{nestedtilt3}).
\end{remark}

\section{Kazhdan-Lusztig theory for $\mathfrak g\mathfrak l(m+n)$ revisited and super duality}
\label{sec:FockredKL}

\subsection{Kazhdan-Lusztig polynomials and canonical basis for
${\mathcal E}^{\m+n}$} \label{sec:bar}

In this subsection we give a presentation of certain parabolic
Kazhdan-Lusztig polynomials in terms of the Fock space ${\mathcal
E}^{\m+n}$ (compare \cite{FKK, Br, BKl}).

For $n\in\N$ let $\mathcal E^{\m+n}_+$ denote the subspace of
$\mathcal E^{\m+n}$ spanned by elements of the form $\mathcal
K_f$, $f\in\Z^{\m+n}_{++}.$
For $n'>n$ define the {\em truncation map}
$ {\textsf{Tr}}_{n',n}:  {\mathcal E}^{\m+n'}_+ \longrightarrow
{\mathcal E}^{\m+n}_+ $
by sending $\mathcal K_f$ to $\mathcal K_{f^{(n)}}$ if $f(i+1)
=-i$ for all $i \ge n$, and to $0$ otherwise. This gives rise to
${{\textsf{Tr}}_{n}}: \mathcal E^{\m+\infty}\rightarrow \mathcal
E_+^{\m+n}$, for all $n$, which in turn allows us to define a
topological completion $\widehat{\mathcal
E}^{\m+\infty}:=\lim\limits_{\stackrel{\longleftarrow}{n}}\mathcal
E_+^{\m+n}$, similarly as in \cite[\S2-d]{Br}. For a finite $n$
let $\widehat{\mathcal E}^{\m+n}\equiv {\mathcal E}^{\m+n}$.

The following proposition can be established similarly as
\cite[Theorems~2.14 and 3.5]{Br} for the special cases $\m
=(1,\ldots,1)$ or $\m=m$.

\begin{proposition} \label{th:barinf}
Let $n\in \N \cup \infty$. There exists a unique anti-linear bar
map ${}^-: \widehat{\mathcal E}^{\m+n} \rightarrow
\widehat{\mathcal E}^{\m+n}$ such that
\begin{enumerate}
\item $\overline{\mathcal K_f} = \mathcal K_f$, for all $f \in
\Z_+^{\m+n}$ minimal in the Bruhat ordering.

\item $\overline{X u} = \overline{X} \overline{u}$, for all $X \in
\mathcal U$ and $u \in \widehat{\mathcal E}^{\m+n}$.

 \item The bar map is an involution.

\item $\overline{\mathcal K_f} = \mathcal K_f + (*)$, where $(*)$
is a (possibly infinite when $n=\infty$) $\Z [q,q^{-1}]$-linear
combination of $\mathcal K_g$'s with $g \in \Z_+^{\m+n}$ such that
$g < f$ in the Bruhat ordering.
\end{enumerate}
\end{proposition}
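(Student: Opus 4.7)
The plan is to adapt Brundan's construction of the bar involution on $\Lambda^m\mathbb V\otimes\Lambda^n\mathbb V^*$ and on $\wgv$ in \cite[Theorems~2.14, 3.5]{Br}, together with Lusztig's quasi-$R$-matrix formalism \cite[Chap.~24, 27]{Lu}, to the parabolic tensor product $\mathcal E^{\m+n}=\bigotimes_a\Lambda^{m_a}\mathbb V\otimes\Lambda^n\mathbb V$. The passage from the known cases (corresponding essentially to $s=1$) to the general parabolic situation is in large part mechanical, so I would emphasize only the new issues: compatibility with the parabolic Bruhat order on $\Z_+^{\m+n}$, the iteration of the quasi-$R$-matrix across the $s+1$ tensor factors, and the convergence required when $n=\infty$.

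First I would fix the bar involution on each tensor factor. Each finite wedge $\Lambda^{m_a}\mathbb V$ carries a natural anti-linear involution fixing the basis $v_{a_1}\wedge\cdots\wedge v_{a_{m_a}}$ with $a_1>\cdots>a_{m_a}$, and similarly for $\Lambda^n\mathbb V$ when $n$ is finite; for $n=\infty$ one invokes Brundan's construction on $\wgv$ fixing the basis $|\la\rangle$. Next I would iteratively define the bar on the tensor product via Lusztig's quasi-$R$-matrix $\Theta$: for two factors $\overline{u\otimes v}=\Theta(\bar u\otimes \bar v)$, and for the $(s+1)$-fold tensor iterate this by peeling off one factor at a time, mirroring the iterated comultiplication $\Delta^s$.

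Properties (2) and (3) are then built into the construction: property (2) is the defining property of $\Theta$, and property (3) follows from $\Theta\cdot\bar\Theta=1$. For property (1), the components of $\Theta$ other than $1\otimes 1$ involve products of $F_a$'s on a left tensorand and $E_a$'s on the right, and by an argument parallel to \cite[Theorem~2.14]{Br} these annihilate $\mathcal K_f$ for $f$ minimal in the Bruhat order, since any such transfer of an index between adjacent blocks either violates the strict decrease required by $\Z_+^{\m+n}$ or produces an $f'>f$, contradicting minimality. The triangularity in property (4), together with integrality of the coefficients, follows from the facts that each nonzero term of $\Theta$ lowers $f$ strictly in the Bruhat order and has coefficient in $\Z[q,q^{-1}]$. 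Uniqueness is then routine: the difference of two such maps is anti-linear, $\mathcal U$-equivariant, fixes the minimal $\mathcal K_f$, and is strictly lower-triangular, hence vanishes by induction on the Bruhat order.

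The step I expect to be the main obstacle is the $n=\infty$ case, where one must show that the a priori formal sum defining $\overline{\mathcal K_f}$ actually lies in $\widehat{\mathcal E}^{\m+\infty}$, i.e.\ that its image under $\textsf{Tr}_n$ is a finite sum for every finite $n$. The argument should run as follows: the action of $\Theta$ preserves the $\epsilon$-weight $\wt^\epsilon$ from \eqref{nonsuperweight}, so every $g$ with nonzero coefficient in $\overline{\mathcal K_f}$ satisfies $\wt^\epsilon(g)=\wt^\epsilon(f)$; combined with the tail condition $f(i+1)=-i$ for $i$ large and the Bruhat-triangularity $g\le f$, this forces $g$ to coincide with $f$ outside a bounded index set, so $\textsf{Tr}_n(\overline{\mathcal K_f})$ has finite support. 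Once well-definedness on $\widehat{\mathcal E}^{\m+\infty}$ is in place, the commutativity of the bar involution with $\textsf{Tr}_{n',n}$ (the analogue of Proposition~\ref{commutativity2}) follows by the same quasi-$R$-matrix argument as in \cite[Proposition~2.8]{CWZ}, making the construction uniform in $n$.
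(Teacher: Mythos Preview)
Your approach is essentially the one the paper intends: it explicitly says the proposition ``can be established similarly as \cite[Theorems~2.14 and 3.5]{Br},'' i.e.\ via Lusztig's quasi-$R$-matrix iterated across the $s+1$ tensor factors, and your outline of that construction, including the $n=\infty$ convergence argument via preservation of $\wt^\epsilon$, matches this.

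One small slip in your write-up of property~(1): you say the nontrivial components of $\Theta$ acting on $\mathcal K_f$ ``produce an $f'>f$, contradicting minimality,'' but minimality of $f$ says nothing about elements \emph{above} it. What the quasi-$R$-matrix analysis actually gives is property~(4): each nontrivial term of $\Theta$ produces only $\mathcal K_g$ with $g<f$ in the Bruhat order (or zero, when the resulting sequence violates the strict-decrease condition in some $\Lambda^{m_a}\mathbb V$). Property~(1) is then an immediate corollary of~(4), since for minimal $f$ there are no $g<f$ in $\Z_+^{\m+n}$ and the lower-order sum is empty. Reordering your argument so that~(4) is established first and~(1) deduced from it fixes this.
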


The next theorem follows from Proposition~\ref{th:barinf}.

\begin{theorem} \label{th:canonical}
Let $n \in \N \cup \infty$. There exist unique topological bases
$\{\mathcal U_f\},$ $ \{\mathcal L_f \}$, where ${f \in
\Z_+^{\m+n}}$,  for $\widehat{\mathcal E}^{\m+n}$ such that
\begin{enumerate}
\item $\overline{\mathcal U}_f =\mathcal U_f$ and
$\overline{\mathcal L}_f =\mathcal L_f$;

\item $\mathcal U_f \in \mathcal K_f + \widehat{\sum}_{g \in
\Z_+^{\m+n}} q \Z [q] \mathcal K_g$ and $\mathcal L_f \in \mathcal
K_f + \widehat{\sum}_{g \in \Z_+^{\m+n}} q^{-1} \Z [q^{-1}]
 \mathcal K_g$.

\item $\mathcal U_f = \mathcal K_f + (*)$ and  $\mathcal L_f =
\mathcal K_f + (**)$, where $(*)$ and $(**)$ are (possibly
infinite when $n=\infty$) $\Z [q,q^{-1}]$-linear combinations of
$\mathcal K_g$'s with $g \in \Z_+^{\m+n}$ such that $g  <f$. For
$n$ finite, $(*)$ and $(**)$ are always finite sums.
\end{enumerate}
\end{theorem}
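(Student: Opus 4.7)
The plan is to construct $\mathcal{U}_f$ and $\mathcal{L}_f$ inductively on the Bruhat order, adapting the standard Kazhdan--Lusztig/Lusztig argument (\cite{KL, Lu, Br}) to the parabolic topological Fock space $\widehat{\mathcal{E}}^{\m+n}$. The key input is the bar involution supplied by Proposition \ref{th:barinf}.

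For uniqueness, I would first show that any bar-invariant element lying in $\widehat{\sum}_{g} q\Z[q]\, \mathcal{K}_g$ (and analogously in $\widehat{\sum}_{g} q^{-1}\Z[q^{-1}]\, \mathcal{K}_g$) must vanish. In the finite-$n$ case this follows by picking a Bruhat-maximal index $g$ at which a hypothetical nonzero coefficient $c_g(q) \in q\Z[q]$ lives and then equating coefficients of $\mathcal{K}_g$ on both sides of $\overline{X} = X$, which forces $c_g(q) = c_g(q^{-1}) \in q\Z[q] \cap q^{-1}\Z[q^{-1}] = \{0\}$. For $n = \infty$ one first applies the truncation maps $\textsf{Tr}_n$ (which commute with the bar involution) to reduce to the finite case at every level. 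Applied to the difference of two candidate canonical (resp.\ dual canonical) basis elements, this lemma yields uniqueness of $\mathcal{U}_f$ and of $\mathcal{L}_f$.

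For existence I would proceed by induction on the Bruhat order. When $f$ is Bruhat-minimal, Proposition \ref{th:barinf}(1) lets me set $\mathcal{U}_f = \mathcal{L}_f = \mathcal{K}_f$. For general $f$, assume $\mathcal{U}_g$ has been built for all $g < f$. Starting from the candidate $\mathcal{K}_f$, I would repeatedly correct it by adding $\Z[q,q^{-1}]$-multiples of the $\mathcal{U}_g$ ($g < f$), processing indices in decreasing Bruhat order so as to successively cancel the residual lower-order terms in $\overline{\mathcal{K}_f} - \mathcal{K}_f$. At every step the required correction coefficient $a_{g,f}(q) \in q\Z[q]$ exists and is unique by the standard Lusztig lemma on writing a bar-antisymmetric Laurent polynomial in the form $a(q) - a(q^{-1})$ with $a \in q\Z[q]$. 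The construction of $\mathcal{L}_f$ is parallel, with $q$ and $q^{-1}$ swapped throughout.

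The main obstacle is convergence in the topological completion when $n = \infty$. For finite $n$ the sums $(*)$ and $(**)$ in (3) are automatically finite, because the Bruhat order on $\Z_+^{\m+n}$ is locally finite (each $S_{m+n}$-orbit is finite). For $n = \infty$ the recursion a priori involves infinitely many predecessors, so one must verify that the resulting formal series actually lies in $\widehat{\mathcal{E}}^{\m+\infty}$. I would handle this as in \cite[\S2-d]{Br} by invoking the fact that the bar involution commutes with the truncation maps $\textsf{Tr}_{n',n}$ (provable via the quasi-$R$-matrix of \cite[Ch.~24, 27]{Lu}, exactly as for Proposition \ref{commutativity2} on the super side). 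The $\textsf{Tr}_n$-images of the candidate $\mathcal{U}_f$ then coincide with the already-constructed finite-$n$ canonical basis elements at the truncated indices, so they assemble compatibly into a well-defined element of the inverse limit $\widehat{\mathcal{E}}^{\m+\infty}$ satisfying properties (1)--(3).
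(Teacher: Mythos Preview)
Your proposal is correct and follows precisely the standard Kazhdan--Lusztig/Lusztig argument that the paper invokes (the paper itself offers no proof beyond the sentence ``The next theorem follows from Proposition~\ref{th:barinf}'', referring implicitly to \cite{KL, Lu, Br}). Your handling of the $n=\infty$ case via truncation maps is also what the paper has in mind; note only that the commutativity of $\textsf{Tr}_{n',n}$ with the bar involution appears in the paper as Proposition~\ref{commutativity-reduct}, stated after the theorem, so you are right to observe that it can be established independently (via the quasi-$R$-matrix, as in Proposition~\ref{commutativity2}) rather than as a corollary.
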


We define $\mathfrak u_{g,f}(q) \in \Z[q], \mathfrak l_{g,f}(q)
\in \Z[q^{-1}]$ for $f,g \in \Z_+^{\m+n}$ by
\begin{eqnarray} \label{ulpoly}
\mathcal U_f =\sum_{g \in \Z_+^{\m+n}} \mathfrak u_{g,f}(q)
\mathcal K_g, \qquad
\mathcal L_f =\sum_{g \in \Z_+^{\m+n}} \mathfrak l_{g,f}(q)
\mathcal K_g.
\end{eqnarray}
Note that $\mathfrak u_{g,f}(q) =\mathfrak l_{g,f}(q)=0$ unless $g
\le f$ and $\mathfrak u_{f,f}(q) =\mathfrak l_{f,f}(q) =1$. These
polynomials can be identified as {\em (parabolic) Kazhdan-Lusztig
polynomials} (cf. Theorem~\ref{th:multiCan} below).

\begin{remark}  \label{dualUK}
By the same type of arguments as in \cite[\S3-c]{Br} we can
introduce a symmetric bilinear form $\langle\cdot,\cdot\rangle$ on
${\mathcal E}^{\m+n}$ such that
%
$\langle\mathcal L_f,\mathcal U_{-g\cdot w_0}\rangle=\delta_{f,g}$
for $f,g \in \Z_+^{\m+n}$,
which readily implies that the matrices $[\mathfrak{u}_{-f \cdot
w_0,-g \cdot w_0}(q)]$ and $[\mathfrak l_{f ,g}(q^{-1})]$ are
inverses of each other. Equivalently, we have
\begin{equation*}
\mathcal K_f=\sum_{g\in\Z^{\m+n}_+} \mathfrak{u}_{-f\cdot w_0,-g
\cdot w_0}(q^{-1})\mathcal L_g =
\sum_{g\in\Z^{\m+n}_+}\mathfrak{l}_{-f \cdot w_0,-g \cdot
w_0}(q^{-1})\mathcal U_g, \quad  f\in\Z^{\m+n}_+.
\end{equation*}
\end{remark}

\begin{proposition} \label{commutativity-reduct}
\begin{enumerate}
\item The truncation map ${\textsf{Tr}}_{n', n}: {\mathcal
E}^{\m+n'}_+\rightarrow{\mathcal E}^{\m+n}_+$ commutes with the
bar-involution, where $\infty \geq n'>n$.

\item $\textsf{Tr}_{n',n}$ sends $\mathcal U_f$ (respectively
$\mathcal L_f$) to $\mathcal U_{f^{(n)}}$ (respectively $\mathcal
L_{f^{(n)}}$) if $f(i+1) =-i$, for all $i \ge n$, and to $0$
otherwise.

\item For $f, g \in \Z_{++}^{\m+n'}$ such that $f(i+1) =g(i+1)
=-i$ for all $i \ge n$, we have
$$\mathfrak u_{g,f} (q) =\mathfrak u_{g^{(n)}, f^{(n)}} (q),\quad
\mathfrak l_{g,f} (q) =\mathfrak l_{g^{(n)}, f^{(n)}} (q).$$
\end{enumerate}
\end{proposition}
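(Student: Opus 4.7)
The plan is to mirror the structure of Proposition~\ref{commutativity2} and Corollary~\ref{aux43} from the super setting: establish the bar-compatibility (1) by the quasi-$R$-matrix argument of \cite[Proposition~2.8]{CWZ} adapted to the non-super Fock space, then deduce (2) and (3) from (1) via the uniqueness in Theorem~\ref{th:canonical}.

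For part (1), by composition it suffices to treat $n'=n+1$. I would identify $\widehat{\mathcal{E}}^{\m+n+1}$ as a subspace of $\widehat{\mathcal{E}}^{\m+n}\otimes \mathbb V$ (via the $q$-antisymmetrizer in the last two tensor slots) and invoke Lusztig's quasi-$R$-matrix $\Theta = 1\otimes 1 + \sum_{\nu>0}\Theta_\nu$ from \cite[Chap.~24, 27]{Lu} to write $\overline{u\otimes v_a} = \Theta\cdot(\bar u\otimes v_a)$. The truncation $\textsf{Tr}_{n+1,n}$ projects onto the component whose last wedge slot is $v_{-n}$. For each $\nu > 0$, the factor $\Theta''_\nu$ of $\Theta_\nu$ has strictly negative weight, so $\Theta''_\nu\cdot v_{-n}$ is a combination of $v_c$ with $c<-n$; these contributions are killed by truncation. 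Only the $1\otimes 1$ summand survives, giving
\[
\textsf{Tr}_{n+1,n}(\overline{\mathcal K_{f^{(n)}}\otimes v_{-n}}) \;=\; \overline{\mathcal K_{f^{(n)}}} \;=\; \overline{\textsf{Tr}_{n+1,n}(\mathcal K_{f^{(n)}}\otimes v_{-n})}
\]
for monomials $\mathcal K_f$ with $f(n+1)=-n$. For $\mathcal K_f$ with $f(n+1)>-n$, both sides vanish: the multiset-preserving nature of Bruhat transpositions ensures that every $\mathcal K_g$ in the expansion of $\overline{\mathcal K_f}$ from Proposition~\ref{th:barinf}(4) also has $g(n+1)>-n$, in direct analogy with the bookkeeping behind \cite[Proposition~2.8]{CWZ}.

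For part (2), given $f \in \Z_+^{\m+n'}$ set $u := \textsf{Tr}_{n',n}(\mathcal U_f)$. By part (1), $\bar u = u$. Expand $\mathcal U_f = \mathcal K_f + \sum_{g<f}\mathfrak u_{g,f}(q)\mathcal K_g$ with $\mathfrak u_{g,f}(q)\in q\Z[q]$ and apply truncation termwise. If $f$ satisfies the tail condition $f(i+1)=-i$ for $i\ge n$, then the surviving $g<f$ carry the same tail, and $g\mapsto g^{(n)}$ is a Bruhat-order-preserving injection into $\Z_+^{\m+n}$ with $g^{(n)}<f^{(n)}$. Thus
\[
u \in \mathcal K_{f^{(n)}} + \widehat{\sum}_{h<f^{(n)}} q\Z[q]\,\mathcal K_h,
\]
and uniqueness in Theorem~\ref{th:canonical} forces $u=\mathcal U_{f^{(n)}}$. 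If $f$ violates the tail condition, then by the same combinatorial observation no $g\le f$ in the expansion satisfies the tail condition, so $u=0$. The same argument applies to $\mathcal L_f$ with $q\Z[q]$ replaced by $q^{-1}\Z[q^{-1}]$.

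Part (3) is then formal: the identity $\mathcal U_{f^{(n)}}=\textsf{Tr}_{n',n}(\mathcal U_f)$ matched against \eqref{ulpoly} under the bijection $g\leftrightarrow g^{(n)}$ on tail-stabilized elements gives $\mathfrak u_{g,f}(q)=\mathfrak u_{g^{(n)},f^{(n)}}(q)$, and analogously for $\mathfrak l$. The main obstacle is the quasi-$R$-matrix bookkeeping in part (1); this is a non-super transposition of \cite[Proposition~2.8]{CWZ} and should require no essentially new ideas beyond verifying that all nontrivial $\Theta$-corrections land outside the image of $\textsf{Tr}_{n+1,n}$.
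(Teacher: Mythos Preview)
Your proposal is correct and follows the same approach as the paper, which simply writes ``Part~(1) is proved similarly as \cite[Proposition~4.29]{CWZ}. (2) and (3) are immediate corollaries.'' Two small remarks: the paper points to \cite[Proposition~4.29]{CWZ} (the non-super analogue) rather than \cite[Proposition~2.8]{CWZ}, though the underlying quasi-$R$-matrix mechanism is the same; and in your weight computation, with the comultiplication used here $\Theta''_\nu$ acts on $v_{-n}$ by raising the index (via $F$'s), so the corrections land in $v_c$ with $c>-n$ rather than $c<-n$---either way they are annihilated by $\textsf{Tr}_{n+1,n}$, so your conclusion is unaffected.
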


\begin{proof}
Part (1) is proved similarly as \cite[Proposition~4.29]{CWZ}. (2)
and (3) are immediate corollaries.
\end{proof}
\subsection{A Fock space isomorphism and consequences}

\begin{proposition} \label{Fockiso}
\begin{enumerate}
\item There is an isomorphism of $\mathcal U$-modules  $C: \wgv
\rightarrow \wgw$ which sends $|\la \rangle$ to $|\la'_* \rangle$
for each partition $\la$.

\item The map $C$ extends naturally to an isomorphism of $\mathcal
U$-modules
$$\natural: \widehat{\mathcal E}^{\m+\infty}
\stackrel{\cong}{\longrightarrow} \widehat{\mathcal
E}^{\m|\infty},$$
\end{enumerate}
which is compatible with the actions of all divided powers
$E_a^{(s)}, F_a^{(s)}.$
\end{proposition}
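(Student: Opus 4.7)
The plan is to establish (1) as an isomorphism of $\mathcal U$-modules between the two semi-infinite wedge spaces, and then derive (2) by tensoring with the identity on the common factor $\bigotimes_a\Lambda^{m_a}\mathbb V$. For (1), I would first fix the combinatorial identification of bases: the indices appearing in $|\lambda\rangle = v_{\lambda_1}\wedge v_{\lambda_2-1}\wedge\cdots$ form the set $S_\lambda := \{\lambda_i - i + 1 : i\geq 1\}$, while those in $|\lambda'_*\rangle = w_{1-\lambda'_1}\wedge w_{2-\lambda'_2}\wedge\cdots$ form $T_\lambda := \{i-\lambda'_i : i\geq 1\}$. The classical fact that $\mathbb Z = S_\lambda\sqcup T_\lambda$ shows $T_\lambda$ is exactly the complement of $S_\lambda$, so both bases $\{|\lambda\rangle\}$ and $\{|\lambda'_*\rangle\}$ are parametrized by the same charge-zero Maya diagrams. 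I then define $C$ as the $\mathbb Q(q)$-linear map $|\lambda\rangle\mapsto|\lambda'_*\rangle$, which is a bijection of bases by construction.

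Next I would verify that $C$ intertwines the $\mathcal U$-action by checking the generators $E_a, F_a, K_{a,a+1}$. Combinatorially, the formulas $F_a v_a = v_{a+1}$ and $F_a w_{a+1} = w_a$ both realize the addition of a box of content $a$ to $\lambda$: in the Maya picture this is the single move $a\mapsto a+1$ in $S_\lambda$, equivalently $a+1\mapsto a$ in the complementary $T_\lambda$; a parallel statement holds for $E_a$ (box removal) and for the $K_{a,a+1}$-eigenvalues (differences of occupation numbers). The $q$-powers arising from the $K_{a,a+1}\otimes F_a$ part of $\Delta(F_a)$ in the iterated coproduct match on both sides because the occupations of $S_\lambda$ and $T_\lambda$ at positions $a$ and $a+1$ are related by complementation, and the vacuum normalizations reconcile the two counts. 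A cleaner alternative, avoiding explicit $q$-power bookkeeping, is to observe that $|\emptyset\rangle$ and $|\emptyset_*\rangle$ are highest-weight vectors of the same weight (both annihilated by every $E_a$, with $K_{0,1}$-eigenvalue $q$ and all other $K_{b,b+1}$-eigenvalues equal to $1$) and that each of $\Lambda^\infty\mathbb V$ and $\Lambda^\infty\mathbb W$ is a cyclic $\mathcal U$-module generated by its vacuum; the unique $\mathcal U$-intertwiner sending vacuum to vacuum must then coincide with $C$ by induction on $|\lambda|$, using that appropriate sequences of $F_a$'s build every $|\lambda\rangle$ (resp.\ every $|\lambda'_*\rangle$) from the vacuum.

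For (2), set $\natural := \mathrm{id}_{\bigotimes_a\Lambda^{m_a}\mathbb V}\otimes C$. Because $\mathcal U$ acts on both ambient tensor products via the same iterated coproduct $\Delta^s$ and each tensor factor of $\natural$ is $\mathcal U$-equivariant, so is $\natural$; compatibility with every element of $\mathcal U$, and in particular with the divided powers $E_a^{(r)}, F_a^{(r)}$, is then automatic because equivariance is a pointwise statement. Extension of $\natural$ to the topological completions $\widehat{\mathcal E}^{\mathbf m+\infty}\to\widehat{\mathcal E}^{\mathbf m|\infty}$ proceeds by observing that $C$ respects the stable-tail structures governing them: the tail $v_{1-i}$ for $i\gg 0$ on the $\mathbf m+\infty$ side corresponds under $C$ to the tail $w_i$ for $i\gg 0$ on the $\mathbf m|\infty$ side, so the finite-projection systems defining the two completions are matched by $\natural$ and the extension follows by continuity. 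The principal technical obstacle is the $q$-power bookkeeping in the direct intertwining check; the highest-weight approach circumvents this at the cost of establishing cyclicity and the weight data, both of which are standard in the literature on $q$-deformed Fock spaces (cf.~\cite{KMS, Br}).
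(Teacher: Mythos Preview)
Your proposal is correct and follows essentially the same route as the paper: for (2) both you and the paper set $\natural = \mathrm{id}\otimes C$ and invoke $\mathcal U$-equivariance of each tensor factor plus compatibility of the tail conditions defining the two completions. For (1) the paper simply cites \cite[Theorem~6.3]{CWZ}, whereas you unpack the argument (complementary Maya diagrams $S_\lambda\sqcup T_\lambda=\Z$, then either a direct generator check or the highest-weight/cyclicity route); this is the standard argument behind that citation, so there is no genuine divergence in strategy.
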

\begin{proof}
Part (1) above is \cite[Theorem~6.3]{CWZ}. Recall that ${\mathcal
E}^{\m+\infty} =\otimes_{i=1}^s\Lambda^{m_i}\mathbb V \otimes
\La^\infty \mathbb V$ and ${\mathcal
E}^{\m|\infty}=\otimes_{i=1}^s\Lambda^{m_i}\mathbb V \otimes
\La^\infty \mathbb V^*$. Then $C: \wgv \rightarrow \wgw$ induces a
$\mathcal U$-module isomorphism $\natural =1\otimes C: {\mathcal
E}^{\m+\infty} \stackrel{\cong}{\longrightarrow} {\mathcal
E}^{\m|\infty}$. One can further check that these two topological
completions $\widehat{\mathcal E}^{\m+\infty}$ and
$\widehat{\mathcal E}^{\m|\infty}$  are indeed compatible under
$\natural$.
\end{proof}

Given $\la = \sum_{i \in I(m|\infty)} \la_i\delta'_i \in
X_{\m+\infty}^{+}$ so that by definition $\la^{>0} :=(\la_1,
\la_2, \ldots)$ is a partition. Denoting by $(\la'_1, \la'_2,
\ldots)$ the conjugate partition of $\la^{>0}$, we define a weight
$$\la^{\natural} := \sum_{i=-m}^{-1} \la_i \delta_i
+\sum_{j=1}^\infty \la_j' \delta_j  \in X_{\m|\infty}^{+}.
$$
This actually defines bijections (denoted by $\natural$ by abuse
of notation)
$$X_{\m+\infty}^{+} \stackrel{\natural}{\longleftrightarrow}
X_{\m|\infty}^{+},\qquad\quad
\Z_+^{\m+\infty}\stackrel{\natural}{\longleftrightarrow}\Z_+^{\m|\infty},
$$
when coupling with the two bijections $X_{\m|\infty}^{+}
\leftrightarrow \Z_+^{\m|\infty}$ and $X_{\m+\infty}^{+}
\leftrightarrow \Z_+^{\m+\infty}$. There is a simple combinatorial
description for the bijection
\begin{eqnarray} \label{bij}
\Z_+^{\m+\infty}
\stackrel{\natural}{\longrightarrow}\Z_+^{\m|\infty}, \quad
(f^{<0}|f^{>0}) \mapsto f=(f^{<0}|\Z \backslash f^{>0})
\end{eqnarray}
in light of \cite[Lemma~6.2]{CWZ}, where $f^{>0}$ denotes the
restriction of $f$ to $I(0|\infty)$ and $\Z \backslash f^{>0}$
denotes the complement of $f^{>0}$ in $\Z$.

\begin{lemma}  \label{lem:move}
\begin{enumerate}
 \item
For $f,g \in \Z_+^{\m+\infty}$, $f \ge g$ in the Bruhat ordering
if and only if $f^\natural \succcurlyeq g^\natural$ in the super
Bruhat ordering.

\item A weight $\la \in X_{m+\infty}^{+}$ is minimal in the Bruhat
ordering if and only if $\la^\natural \in \Xmi^{+}$ is minimal in
the super Bruhat ordering.
\end{enumerate}
\end{lemma}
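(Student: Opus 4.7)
The plan is to prove Part (1) by matching atomic Bruhat moves under the bijection $\natural$ described in (\ref{bij}), and deduce Part (2) as a formal consequence of order-preservation.

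Recall that $\natural$ is the identity on negative indices, while on the positive side one has $\{f^\natural(l): l\ge1\} = \mathbb Z \setminus \{f(l): l\ge 1\}$. I would systematically translate the three types of super Bruhat moves $f^\natural \downarrow h$ into non-super Bruhat moves on $f$. Type (2) moves (swaps on negative indices) translate verbatim to non-super swaps, since $f^\natural$ and $f$ agree there. Type (3) moves (swaps on positive indices) correspond, under the complementation, to swaps of the ``dual'' values in $f^{>0}$, yielding non-super transpositions that decrease $f$. For a Type (1) move at an atypical pair $f^\natural(i) = f^\natural(j) = k$ with $i<0<j$: the condition $k \in \mathbb Z \setminus f^{>0}$ is automatic from $\natural$, and the move lands in $\Z_+^{\m|\infty}$ exactly when $k-1 \in f^{>0}$, in which case it translates into the cross-sign transposition $f \mapsto f \cdot \tau_{i,l}$, where $l$ is the positive index with $f(l) = k-1$. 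Iterating over a chain of covering moves, and using that equality of $\epsilon$-weights is preserved under $\natural$ (the positive contribution flips sign and is complemented in $\mathbb Z$), one obtains the biconditional in Part (1).

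Part (2) then follows immediately from Part (1), since an order-preserving bijection between posets preserves minimal elements. The main obstacle lies in the careful handling of Type (1) moves: a single atomic super move may land outside $\Z_+^{\m|\infty}$ when $k-1 \notin f^{>0}$, and to realize an arbitrary chain of super Bruhat moves between elements of $\Z_+^{\m|\infty}$ as a chain of non-super moves between elements of $\Z_+^{\m+\infty}$, one must allow intermediate elements outside these subsets and verify that the induced orderings on $\Z_+^{\m+\infty}$ and $\Z_+^{\m|\infty}$ still match through those intermediaries. An alternative route that avoids this technical bookkeeping is to use the $\epsilon$-weight dominance characterization of both Bruhat orderings (see \cite[\S 2b]{Br} for the super case and its classical analogue), whose partial-sum inequalities transform into each other under the complementation of positive value-sets implicit in $\natural$.
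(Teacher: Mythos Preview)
Your approach of matching generating moves under $\natural$ is exactly what the paper does, but the paper dissolves your ``main obstacle'' by a simplification you miss: the induced orderings on $\Z_+^{\m+\infty}$ and $\Z_+^{\m|\infty}$ can each be written as the transitive closure of only \emph{two} kinds of moves, taken with a $(\,\cdot\,)^+$-conjugate at every step so one never leaves the $+$-subset. On the super side these are (i) $g=(f-d_i+d_j)^+$ for $i<0<j$ with $f(i)=f(j)$, and (ii) $g=(f\cdot\tau_{ij})^+$ for $i<j<0$ with $f(i)>f(j)$; on the non-super side, (i$'$) $g=(f\cdot\tau_{ij})^+$ for $i<0<j$ with $f(i)>f(j)$, and (ii$'$) identical to~(ii). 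Your Type~(3) move never appears: a positive-index swap on an element of $\Z_+^{\m|\infty}$ followed by $(\,\cdot\,)^+$ returns the original element, so your claim that it corresponds to ``swaps of dual values in $f^{>0}$'' is wrong (a swap does not change the set of positive values, hence not its complement either). With this reformulation the correspondence (ii)$\leftrightarrow$(ii$'$) is immediate from (\ref{bij}), and (i)$\leftrightarrow$(i$'$) is exactly the content of \cite[Lemma~6.6]{CWZ}, to which the paper refers; no intermediaries outside $\Z_+$ are needed, and the condition $k-1\in f^{>0}$ that troubled you becomes irrelevant. Part~(2) follows from~(1) just as you say. Your alternative route through the $\epsilon$-weight dominance inequalities is viable but is not the argument the paper gives.
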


\begin{proof}
(2) is a special case of (1), so let us prove (1).

Denote by $f^+$ the (unique if exists) conjugate in
$\Z_+^{m+\infty}$ of $f\in \Z^{m+\infty}$ under the action of
$S_{\m+\infty}$. The super Bruhat ordering $\succcurlyeq$ on
$\Z_+^{\m|\infty}$ is the transitive closure of the partial order
$g\succcurlyeq f$ given by
\begin{enumerate}
 \item [(i)] $g=(f-d_i +d_j)^+ \text{ for some } i<0 <j \text{ such that } f(i)
=f(j);$

\item[(ii)] $g =(f \cdot \tau_{ij})^+ \text{ for some } i<j<0
\text{ such that } f(i)>f(j)$.
\end{enumerate}

On the other hand, the Bruhat ordering $\geq$ on
$\Z_+^{\m+\infty}$ is the transitive closure of the partial order
$g \geq f$ given by
\begin{enumerate}
 \item [(i')]  $g =(f \cdot \tau_{ij})^+ \text{ for some } i<0<j
\text{ such that } f(i)>f(j);$

\item[(ii')] $g =(f \cdot \tau_{ij})^+ \text{ for some } i<j<0
\text{ such that } f(i)>f(j)$.
\end{enumerate}

Exactly as explained in the proof of \cite[Lemma~6.6]{CWZ} when
$\m=m$, under the explicit bijection $\natural: \Z_+^{\m+\infty}
{\longrightarrow}\Z_+^{\m|\infty}$ given by (\ref{bij}), the
Step~(i) corresponds to Step~(i'). Now clearly the Step (ii)
corresponds to (ii') by (\ref{bij}). This proves (1).
\end{proof}

\begin{theorem} \label{correspondence}
The isomorphism $\natural: \widehat{\mathcal E}^{\m+\infty}
\longrightarrow \widehat{\mathcal E}^{\m|\infty}$ has the
following properties:
\begin{enumerate}

\item $\natural (\mathcal K_f) = K_{f^\natural}$ for each
$f\in\Z_+^{\m+\infty}$;

 \item $\natural$ is compatible
with the bar involutions, i.e., $\natural (\bar{u})
=\overline{\natural (u)}$ for each $u \in \widehat{\mathcal
E}^{\m+\infty}$;

\item $\natural (\mathcal L_f) = L_{f^\natural}$ for each
$f\in\Z_{+}^{\m+\infty}$;

\item $\natural (\mathcal U_f) =U_{f^\natural}$ for each
$f\in\Z_{+}^{\m+\infty}$.

\item For $f,g \in\Z_{+}^{\m+\infty}$, we have $\mathfrak u_{g,f}
(q) = u_{g^\natural,f^\natural} (q),$
and $ \mathfrak l_{g,f} (q) = \ell_{g^\natural,f^\natural} (q). $
\end{enumerate}
\end{theorem}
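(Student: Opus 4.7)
The plan is to verify the five items essentially in order, with (1) being a definitional check, (2) being the crux, and (3)--(5) being formal consequences of (1), (2), and the uniqueness characterizations of the canonical and dual canonical bases.

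For (1), I would unwind the definition of $\natural=1\otimes C$ from Proposition~\ref{Fockiso}: on the first $s$ tensor factors $\natural$ is the identity, while on the last factor $C$ sends $|\lambda\rangle\mapsto|\lambda'_*\rangle$. The combinatorial bijection (\ref{bij}) on $\Z^{\m+\infty}_+\to\Z^{\m|\infty}_+$ is precisely designed so that, on the last tensor slot, the semi-infinite wedge indexed by $f^{>0}$ goes to the semi-infinite wedge indexed by the complement $\Z\setminus f^{>0}$, reflecting the conjugation of the partition. Hence $\natural(\mathcal K_f)=K_{f^\natural}$.

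For (2), I would invoke the uniqueness characterization of the bar involution in Proposition~\ref{th:barinf}. Define a candidate map on $\widehat{\mathcal E}^{\m+\infty}$ by $u\mapsto \natural^{-1}\bigl(\overline{\natural(u)}\bigr)$. By Proposition~\ref{Fockiso}(2), $\natural$ intertwines the $\mathcal U$-action, so this candidate map is anti-linear, commutes with $\mathcal U$, and is an involution. For a minimal $f\in\Z^{\m+\infty}_+$, Lemma~\ref{lem:move}(2) gives that $f^\natural$ is minimal in the super Bruhat order, so $\overline{K_{f^\natural}}=K_{f^\natural}$ by Proposition~\ref{thm:involutionsuper}(1); combined with (1) this shows the candidate fixes $\mathcal K_f$ for all minimal $f$. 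Finally, using (1) together with Lemma~\ref{lem:move}(1) (which converts lower terms in the Bruhat order to lower terms in the super Bruhat order), the candidate map sends $\mathcal K_f$ to $\mathcal K_f+(*)$ with $(*)$ a $\Z[q,q^{-1}]$-linear combination of $\mathcal K_g$ with $g<f$. By the uniqueness clause of Proposition~\ref{th:barinf}, the candidate map coincides with the bar involution on $\widehat{\mathcal E}^{\m+\infty}$, which is exactly the asserted intertwining.

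For (3) and (4), I would apply (1) and (2) to the defining properties of $U_{f^\natural}$ and $L_{f^\natural}$. By (1), $\natural(\mathcal U_f)\in K_{f^\natural}+\widehat\sum_{g<f}q\Z[q]\,K_{g^\natural}$, and under $\natural$ these lower indices $g^\natural$ are precisely the elements $\prec f^\natural$ by Lemma~\ref{lem:move}(1). By (2), $\natural(\mathcal U_f)$ is bar-invariant. The uniqueness of the canonical basis in Theorem stated in Section~\ref{aux:base} then forces $\natural(\mathcal U_f)=U_{f^\natural}$. The argument for $\natural(\mathcal L_f)=L_{f^\natural}$ is verbatim the same, replacing $q\Z[q]$ by $q^{-1}\Z[q^{-1}]$. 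Item (5) is then immediate: compare the expansion coefficients on both sides of $\natural(\mathcal U_f)=U_{f^\natural}$ and $\natural(\mathcal L_f)=L_{f^\natural}$ in the monomial bases, using (1) once more to match $\mathcal K_g$ with $K_{g^\natural}$.

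The main obstacle is verifying (2), and within that the key technical ingredient is Lemma~\ref{lem:move}(1): without the fact that the bijection $\natural$ preserves the partial orders, one cannot control the ``lower order'' terms appearing in $\natural(\overline{\mathcal K_f})$, and the uniqueness argument for the bar involution would break down. Everything else is a standard application of the KL/Lusztig uniqueness machinery once the order-preserving bijection on index sets is in hand.
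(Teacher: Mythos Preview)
Your proposal is correct and follows essentially the same approach as the paper's own proof: item (1) from the definitions and Proposition~\ref{Fockiso}, item (2) from the uniqueness characterization of the bar involution together with Lemma~\ref{lem:move}, and items (3)--(5) from the uniqueness of (dual) canonical bases. Your version is considerably more detailed---in particular you spell out the candidate map $u\mapsto\natural^{-1}(\overline{\natural(u)})$ and explicitly invoke Lemma~\ref{lem:move}(1) to control the lower-order terms, whereas the paper's terse proof of (2) cites only Lemma~\ref{lem:move}(2)---but the underlying logic is the same.
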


\begin{proof}
(1) follows from the definitions and Proposition~\ref{Fockiso}. (2)
follows from Proposition~\ref{Fockiso}, Lemma \ref{lem:move}~(2) and
the characterizations of the bar involutions. (3) and (4) follow
from (1), (2), Lemma \ref{lem:move}, and the characterizations of
these bases.
\end{proof}

The following verifies a parabolic version of \cite[Conjecture
2.28]{Br}.

\begin{theorem}  \label{th:BrConj}
\begin{enumerate}
\item  The Brundan-Kazhdan-Lusztig polynomials satisfy the
following positivity: $\mathfrak u_{\mu,\la} (q) \in \N[q],\;
\mathfrak l_{\mu,\la} (-q^{-1}) \in \N[q]$ for all $\la, \mu \in
X_{\m|n}^{+}$.

\item For each $a\in \Z, r\ge 1$, and $f \in \Z_+^{\m|n}$, the
coefficients of $E_a^{(r)} U_f, F_a^{(r)} U_f$ (respectively
$E_a^{(r)} L_f, F_a^{(r)} L_f$) in the expansion in terms of the
canonical basis $\{U_g\}$ (respectively, the dual canonical basis
$\{L_g\}$) lie in $\N[q, q^{-1}]$.
\end{enumerate}
\end{theorem}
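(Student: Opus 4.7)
The plan is to transport both statements to the classical (non-super) Fock space $\widehat{\mathcal{E}}^{\m+\infty}$ via the isomorphism $\natural$ of Theorem~\ref{correspondence}, and then invoke the established positivity results from type~$A$ Kazhdan--Lusztig theory.

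For part~(1), I would first use Corollary~\ref{aux43}(4) to reduce the finite-$n$ statement to the case $n=\infty$: after appending the canonical tail $f(i)=i$ for large $i$, the polynomials $u_{\mu,\la}(q)$ and $\ell_{\mu,\la}(q)$ for $\la,\mu \in X_{\m|n}^{++}$ coincide with the corresponding polynomials on $\widehat{\mathcal{E}}^{\m|\infty}_+$. Next, Theorem~\ref{correspondence}(5) identifies these with the classical polynomials $\mathfrak{u}(q), \mathfrak{l}(q)$ on $\widehat{\mathcal{E}}^{\m+\infty}$ via the bijection $\natural$. Finally, by Proposition~\ref{commutativity-reduct}(3), these in turn agree with the classical $\mathfrak{u}, \mathfrak{l}$ on $\mathcal{E}^{\m+N}$ for large finite $N$, which are parabolic Kazhdan--Lusztig polynomials of type~$A$ through the identification of Theorem~\ref{th:multiCan}. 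The desired positivities $\mathfrak{u}(q) \in \N[q]$ and $\mathfrak{l}(-q^{-1}) \in \N[q]$ then follow from the classical positivity of parabolic KL polynomials, which is a consequence of the perverse-sheaf interpretation on partial flag varieties (Beilinson--Bernstein, Brylinski--Kashiwara, with Kashiwara--Tanisaki in the parabolic setting).

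For part~(2), the key point is that $\natural$ intertwines every divided power $E_a^{(r)}$ and $F_a^{(r)}$ (Proposition~\ref{Fockiso}(2)) and identifies $\{U_f\}$ with $\{\mathcal{U}_{f'}\}$ and $\{L_f\}$ with $\{\mathcal{L}_{f'}\}$ (Theorem~\ref{correspondence}(3)(4)). Hence the expansion coefficients of $E_a^{(r)} U_f$ and $F_a^{(r)} U_f$ in $\{U_g\}$ on the super side coincide with those of $E_a^{(r)} \mathcal{U}_{f'}$ and $F_a^{(r)} \mathcal{U}_{f'}$ in $\{\mathcal{U}_{g'}\}$ on the classical side, and the analogous identification holds for the dual canonical basis. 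Positivity in $\N[q,q^{-1}]$ on the classical side is Lusztig's positivity theorem for canonical bases of integrable modules over quantum groups of type~$A$, obtainable via the perverse-sheaf realization. For finite $n$, one again passes through the truncation compatibility to reach the $n=\infty$ setting.

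The hard part is not internal to this argument but lies in the appeal to the classical geometric positivity theorems. Once those are granted, what remains is a bookkeeping exercise chasing bases and polynomials through the Fock space isomorphism $\natural$; the substantive infrastructure has already been assembled in Theorem~\ref{correspondence} together with the compatibility results of Sections~\ref{sec:basic} and \ref{sec:FockredKL}. A minor technical point to check carefully is that the stabilization under truncation on both the super and classical sides lines up with $\natural$ consistently, but this is essentially forced by the combinatorial bijection \eqref{bij} together with Proposition~\ref{commutativity-reduct} and Corollary~\ref{aux43}.
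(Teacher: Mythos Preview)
Your proposal is correct and follows essentially the same route as the paper: transport everything through $\natural$ to the classical Fock space, then invoke the geometric positivity of parabolic type~$A$ Kazhdan--Lusztig polynomials (for part~(1)) and Lusztig's positivity for canonical bases (for part~(2), cf.\ Remark~\ref{rem:folklore}).

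One small gap: the truncation maps $\mathfrak{Tr}_{n',n}$ and the compatibility statements in Corollary~\ref{aux43} only apply on $\widehat{\mathcal{E}}^{\m|n}_+$, i.e.\ for weights in $X_{\m|n}^{++}$, whereas the theorem is stated for all of $X_{\m|n}^{+}$. The paper handles this by an elementary shift: setting $\mathbf{1}_{m|n}=(-1,\ldots,-1\,|\,1,\ldots,1)$, one has $u_{\mu,\la}(q)=u_{\mu+k\mathbf{1}_{m|n},\,\la+k\mathbf{1}_{m|n}}(q)$ and similarly for $\ell$, and $\la+k\mathbf{1}_{m|n}\in X_{\m|n}^{++}$ for $k\gg 0$. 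An analogous shift (together with the compatible shift $a\mapsto a-k$ on the Chevalley generator index) is used in part~(2) to land inside $\mathcal{E}^{\m|n}_+$ before passing to $n=\infty$. You should insert this reduction step; otherwise your argument only covers the $++$ case.
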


\begin{remark} \label{rem:folklore}
Set $n=0$ in Theorem~\ref{th:BrConj}, and we are in the setup of
the Fock space corresponding to usual parabolic category $\mathcal
O_\m^+$ of $\gl(m)$-modules. It is folklore that
Theorem~\ref{th:BrConj}~(2) with $n=0$ should be true and indeed a
proof is known to Lusztig \cite{Lu3}. Theorem~\ref{th:BrConj}~(2)
with $n=0$ would also follow from the graded lifts in the sense of
Beilinson, Ginzburg and Soergel \cite{BGS} of the category
$\mathcal O_\m^+$ and the divided power translation functors
$E_a^{(r)}, F_a^{(r)}$, for $a \in\Z, r \ge 1$. For example, a
complete proof in a special case of such a lift of the divided
powers has been written down by Frenkel, Khovanov and Stroppel
\cite[Theorems~3.6, 5.3]{FKS} (see Remark~5.6 therein for the
general category $\mathcal O$, and the parabolic case should
follow too). We thank Jon Brundan for the reference and
clarification.
\end{remark}

\begin{proof}
(1) It suffices to prove when $n$ is finite. Let us identify the
Kazhdan-Lusztig polynomials for $\glmn$ with the usual
Kazhdan-Lusztig polynomials for $\mathfrak{gl}(m+N)$ for {\em
finite} $n$ and $N$. Given $\la, \mu \in X_{\m|n}^{++}$, we obtain
$\la_\infty \in X_{\m|\infty}^+$ the extension of $\la$ by zeros,
and $\la_\infty^\natural \in X_{\m+\infty}^+$. Write
$\la_\infty^\natural = ((\la_\infty^\natural)^{<0} |
(\la_\infty^\natural)^{>0})$. Assuming the lengths of the
partitions $(\mu_\infty^\natural)^{>0}$ and
$(\la_\infty^\natural)^{>0}$ are no larger than $N$, we have
$\la_\infty^{\natural,(N)}, \mu_\infty^{\natural,(N)} \in
X_{\m+N}^{++}.$ Then,
\begin{eqnarray*}
 u_{\mu,\la}(q)
 &=& u_{\mu_\infty,\la_\infty}(q)
 =\mathfrak u_{\mu_\infty^\natural,\la_\infty^\natural}(q)
 = \mathfrak u_{\mu_\infty^{\natural, (N)},\la_\infty^{\natural,
 (N)}}(q).
\end{eqnarray*}
Similarly, we have
$
 \ell_{\mu,\la}(q)
 = \mathfrak l_{\mu_\infty^{\natural,
 (N)},\la_\infty^{\natural,(N)}}(q).$

The general case of $u_{\mu,\la}(q), \ell_{\mu,\la}(q)$ for $\la,
\mu \in X_{\m|n}^{+}$ can be easily reduced to the case considered
above as follows. Let ${\bf 1}_{m|n}:=(\overbrace{-1,\ldots,
-1}^m|\overbrace{1,\ldots,1}^n) \in X_{\m|n}^{++}$. Note that
$u_{\mu,\la}(q) =u_{\mu+k{\bf 1}_{m|n},\la+k{\bf 1}_{m|n}}(q),
\ell_{\mu,\la}(q) =\ell_{\mu+k{\bf 1}_{m|n},\la+k{\bf
1}_{m|n}}(q)$, and also that $\la+k{\bf 1}_{m|n} \in
X_{\m|n}^{++}$, for $\la\in X_{\m|n}^{+}$ and $k \gg 0$.

Thus our result follows from the corresponding well-known
positivity results of Kazhdan-Lusztig polynomials which was proved
using deep geometric techniques \cite{KL2, BB, BK}.

(2) Let $1_{m|n}\in\Z^{{\bf m}|n}$ denote function given by
$1_{m|n}(i)=1$, for all $i\in I(m|n)$. The formula for $U_{f-k{
1}_{m|n}}$, with $k \in\Z$, is obtained from $U_f$ by shifting the
weights in the monomials that appear in $U_f$ by $-k{1}_{m|n}$. Also
if we write $X_a^{(r)} U_f=\sum_{g}x_{gf}(q) U_g$, with
$x_{gf}(q)\in\Z[q,q^{-1}]$, then $X_{a-k}^{(r)}
U_{f-k1_{m|n}}=\sum_{g}x_{gf}(q) U_{g-k 1_{m|n}}$ (here $X=E,F$).
Thus it suffices to verify (2) within ${{\mathcal E}}^{{\bf m}|n}_+$
by assuming $a<n$ and $f \in\Z_{++}^{\m|n}$. Using the truncation
maps we can pass to the case when $n=\infty$ (see
Corollary~\ref{aux43}). By Proposition~\ref{Fockiso} and
Theorem~\ref{correspondence}, this amounts to prove the
corresponding statement for $\mathcal U_f$ and $\mathcal L_f$ in
$\widehat{\mathcal E}^{\m+\infty}$. But this follows from the
validity of the corresponding statement in $\widehat{\mathcal
E}^{\m+n}$ for $n$ finite (see Remark~\ref{rem:folklore}) and the
property of the truncation map $\textsf{Tr}_{\infty,n}$ in
Proposition~\ref{commutativity-reduct}.
\end{proof}

As explained in \cite[2-k]{Br}, the positivity in
Theorem~\ref{th:BrConj}~(2) together with (a parabolic variant of)
the algorithm in \cite[2-j]{Br} for computing the canonical basis
elements in $\widehat{\mathcal E}^{\m|n}$ imply the following.

\begin{corollary}
Let $n$ be finite. Every canonical basis element $U_f$ in the
completion $\widehat{\mathcal E}^{\m|n}_+$ actually lies in
${\mathcal E}^{\m|n}_+$, that is, $U_f$ is a finite sum of
monomials $K_g$.
\end{corollary}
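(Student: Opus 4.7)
My plan is to adapt Brundan's argument from \cite[\S2-k]{Br} to the parabolic setting, by combining the positivity statements of Theorem~\ref{th:BrConj} with a parabolic variant of the inductive algorithm of \cite[\S2-j]{Br} for computing canonical basis elements in $\widehat{\mathcal E}^{\m|n}_+$. The idea is that the algorithm builds $U_f$ as an $\N[q,q^{-1}]$-combination extracted from the action of divided powers on previously constructed canonical basis elements, and positivity forces this combination to be finite at every stage.

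First, I would set up a recursive construction of $U_f$ along the lines of \cite[\S2-j]{Br}. The base case is $f$ minimal in the super Bruhat order, where Proposition~\ref{thm:involutionsuper}(1) together with the uniqueness of the canonical basis gives $U_f = K_f$, a finite sum of monomials. For the inductive step, the (parabolic) algorithm produces $f' \prec f$ and a divided power $X \in \{E_a^{(r)}, F_a^{(r)}\}$ such that
\[
XU_{f'} = U_f + \sum_{g \prec f} c_g(q)\, U_g,
\]
with every $c_g(q) \in \N[q,q^{-1}]$ by Theorem~\ref{th:BrConj}(2). Assuming by induction that $U_{f'}$ is a finite sum of monomials, the left-hand side is also a finite sum of monomials, since each divided power acts on a monomial by a finite sum of monomials.

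The crucial remaining point is to verify that only finitely many $g$ contribute on the right-hand side. Here I would invoke the polynomial positivity $u_{h,g}(q)\in\N[q]$ from Theorem~\ref{th:BrConj}(1): the coefficient of any monomial $K_h$ in $\sum_g c_g(q) U_g$ equals $c_h(q) + \sum_{g \succ h} c_g(q)\, u_{h,g}(q)$, which is a sum of nonnegative elements of $\N[q,q^{-1}]$ and hence cannot exhibit any cancellation. Consequently every $g$ with $c_g(q)\neq 0$ must lie in the (finite) monomial support of $XU_{f'}$. Each such $U_g$ with $g\prec f$ is a finite sum of monomials by the inductive hypothesis, so
\[
U_f = XU_{f'} - \sum_{g \prec f} c_g(q)\, U_g
\]
is itself a finite sum of monomials.

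The main obstacle is organizing the recursion correctly: the super Bruhat order on $\Z_+^{\m|n}$ admits infinite descending chains, so the induction must proceed along a well-founded secondary statistic (for instance, a combinatorial length attached to the algorithmic word producing $f$, as in Brundan's case $\m=(1,\ldots,1)$) rather than along $\prec$ itself. I would then need to verify that the algorithm of \cite[\S2-j]{Br} carries over verbatim to the parabolic setting; this should go through because its essential inputs—the action of divided powers on monomials and the structure of the quasi-$R$-matrix used to construct the bar involution on $\widehat{\mathcal E}^{\m|n}$—respect the parabolic tensor factorization $\bigotimes_a \Lambda^{m_a}\mathbb V \otimes \Lambda^n\mathbb W$.
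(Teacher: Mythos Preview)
Your proposal is correct and takes essentially the same approach as the paper: the paper's proof is nothing more than the sentence preceding the corollary, which invokes the positivity of Theorem~\ref{th:BrConj}(2) together with a parabolic variant of Brundan's algorithm \cite[\S2-j, \S2-k]{Br}, and your write-up is a faithful elaboration of exactly that argument. One small sharpening: you do not actually need the inductive hypothesis to conclude that each $U_g$ with $c_g(q)\neq 0$ is finite---the same positivity argument you use to bound the index set (namely $c_g(q)u_{h,g}(q)\in\N[q,q^{-1}]$ forbids cancellation) already shows that the monomial support of every such $U_g$ is contained in the finite monomial support of $XU_{f'}$, which sidesteps any concern about whether those $g$ are strictly smaller in your well-founded statistic.
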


\begin{remark}
Such a finiteness of canonical basis elements in $\mathcal
E^{\m|n}$ supports Conjecture~\ref{conj:BrKL}, since it is
compatible with the fact that a Verma flag of any tilting module
in $\FPn$ is finite.
\end{remark}

\begin{corollary} \label{cor:finite}
Let $\infty \ge n >n_0$, $f \in \Z^{\m|n_0}_{++}$, and extend $f$
to $f^{(n)} \in \Z^{\m|n}_{++}$ by letting $f^{(n)} (i)=i$ for
$n_0<i\le n$.  Let $n_f \gg 0$ be the smallest integer such that
$\# f^{(n)} =\# f$, for all $n \ge n_f$. Then $U_{f^{(n)}}$
contains the same (finite) number of monomials for all $\infty \ge
n \ge n_f$.
\end{corollary}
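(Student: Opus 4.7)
The plan is to reduce the stability claim to a combinatorial statement about super Bruhat predecessors of $f^{(n+1)}$, using the compatibility of canonical bases with truncation maps from Corollary~\ref{aux43}, and then to verify that statement by tracking two multiset invariants along any chain of Bruhat moves.

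First, Corollary~\ref{aux43}(2) gives $\mathfrak{Tr}_{n+1,n}(U_{f^{(n+1)}}) = U_{f^{(n)}}$ for $n \ge n_0$, since $f^{(n+1)}(n+1) = n+1$. As $\mathfrak{Tr}_{n+1,n}$ sends $K_g$ to $K_{g^{(n)}}$ when $g(n+1) = n+1$ and to $0$ otherwise, the monomial count of $U_{f^{(n)}}$ is at most that of $U_{f^{(n+1)}}$, with equality iff every monomial $K_g$ occurring in $U_{f^{(n+1)}}$ satisfies $g(n+1) = n+1$. This reduces the corollary (for finite $n$) to the claim: \emph{for $n \ge n_f$, every $g \in \Z_+^{\m|n+1}$ with $g \preccurlyeq f^{(n+1)}$ satisfies $g(n+1) = n+1$.}

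To prove the claim, I would first observe that the choice of $n_f$ forces $n+1 \notin \{f(j) : j<0\}$ whenever $n \ge n_f$: the degree of atypicality $\#f^{(n)}$ is a non-decreasing function of $n$, and its increment at the step $n \to n+1$ is nonzero precisely when the new positive value $n+1$ lies in the negative multiset, so stability of $\#f^{(n)}$ past $n_f$ rules this out. Along any chain of super Bruhat moves (the three types listed in Subsection~\ref{subsec:superFock}) starting at $f^{(n+1)}$, I would then propagate, by simultaneous induction, the following two invariants:
\begin{itemize}
\item[(a)] the value $n+1$ never appears at a negative position;
\item[(b)] the multiset of values at positive positions always contains $n+1$ and has maximum equal to $n+1$.
\end{itemize}
Both hold initially, by the observation above and by construction of $f^{(n+1)}$. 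Moves (2) and (3) are swaps within negative, resp.\ positive, positions, and trivially preserve (a) and (b). Move (1) simultaneously lowers a common value $v$ at a negative and a positive position to $v-1$; it could violate (a) only with $v=n+2$ (impossible, since then $n+2$ would lie in the positive multiset, exceeding the maximum $n+1$ guaranteed by (b)), and could violate (b) only with $v=n+1$ (impossible, since then $n+1$ would lie in the negative multiset, contradicting (a)). Since $g \in \Z_+^{\m|n+1}$ has strictly increasing positive values, invariant (b) at $g$ forces $g(n+1)=n+1$, proving the claim.

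For the case $n=\infty$, I would apply Corollary~\ref{aux43}(2) with $n'=\infty$. The tail condition in $\Z_+^{\m|\infty}$ ensures every monomial $K_g$ appearing in $U_{f^{(\infty)}}$ has $g(i)=i$ for $i \gg 0$, so such monomials biject under $\mathfrak{Tr}_{\infty,n}$ with monomials of $U_{f^{(n)}}$ for all sufficiently large $n$. Passing to the limit shows that $U_{f^{(\infty)}}$ is a finite sum whose monomial count equals the stable value established for finite $n \ge n_f$. The main obstacle is the invariant-propagation argument in the claim: a chain of super Bruhat moves may pass through elements of $\Z^{m|n+1}$ outside $\Z_+^{\m|n+1}$, so (a) and (b) must be phrased as multiset statements insensitive to $\Z_+$-normalization, and the interplay between them through move~(1) necessitates a joint rather than separate induction.
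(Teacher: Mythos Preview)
Your proposal is correct and follows essentially the same route as the paper: reduce via Corollary~\ref{aux43} to a combinatorial claim about super Bruhat predecessors of $f^{(n)}$, then verify that claim by tracking how the multisets of positive and negative values evolve under the three types of moves. The paper compresses this into one line by citing the proof of Lemma~\ref{lem:domin} (and its use in Proposition~\ref{series}), truncating directly from $n$ to $n_f$ rather than one step at a time, but the content is the same.

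One small observation: your joint induction on (a) and (b) can be streamlined. Since under move~(1) the value at the negative position strictly decreases, the single invariant ``all negative values are $\le n_f$'' (which holds initially by your atypicality analysis) propagates on its own, without any appeal to (b). Once that is known, positive values exceeding $n_f$ can never participate in move~(1), so the multiset $\{n_f+1,\dots,n\}$ of large positive values is preserved verbatim, and sorting gives $g(i)=i$ for $n_f<i\le n$ in one stroke. This is the form of the argument implicit in the paper's reference to Lemma~\ref{lem:domin}, and it handles all $n\ge n_f$ simultaneously rather than step by step. Your treatment of $n=\infty$ is slightly informal but easily made rigorous along the lines you indicate.
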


\begin{proof}
Let $\infty \ge n \ge n_f$. Also write $U_{f^{(n)}} =\sum_{g
\preceq f^{(n)}} u_{g,f^{(n)}}(q) K_g$. It follows from $g \preceq
f^{(n)}$ and $n \ge n_f$ that $g \in X_{\m|n}^{++}$ and
$g=g_1^{(n)}$ for $g_1 \in X_{\m|n_f}^{++}$. Recall that
$\mathfrak{Tr}_{n,n_f} (K_{g}) =K_{g_1}, \mathfrak{Tr}_{n,n_f}
(U_{f^{(n)}}) =U_{f^{(n_f)}}$. Thus when applying the truncation
map $\mathfrak{Tr}_{n,n_f}$ to the previous identity for
$U_{f^{(n)}}$, every nonzero monomial survives, and we obtain that
$U_{f^{(n_f)}} =\sum_{g_1 \preceq f} u_{g,f^{(n)}}(q) K_{g_1}$.
\end{proof}

\begin{corollary} \label{cor:finite-reduct}
Let $n >n_0$, $f \in \Z^{\m +n_0}_{++}$, and extend $f$ to
$f^{(n)} \in \Z^{\m+n}_{++}$ by letting $f^{(n)} (i)=1-i$ for
$n_0<i\le n$. Then, there exists $n_f \gg 0$ such that the number
of monomials in $\mathcal U_{f^{(n)}}$ is independent of $n \ge
n_f$.
\end{corollary}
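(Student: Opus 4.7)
The plan is to reduce to the super case, where the analogous stability has already been established in Corollary~\ref{cor:finite}, by transferring through the Fock space isomorphism $\natural$ of Proposition~\ref{Fockiso} and Theorem~\ref{correspondence}. First I would extend $f$ to $f^{(\infty)} \in \Z_+^{\m+\infty}$ by setting $f^{(\infty)}(i) = 1-i$ for all $i > n_0$, and consider the infinite-level canonical basis element $\mathcal U_{f^{(\infty)}} \in \widehat{\mathcal E}^{\m+\infty}$.

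The key preliminary step is that, by Theorem~\ref{correspondence}, $\natural(\mathcal U_{f^{(\infty)}}) = U_{(f^{(\infty)})^\natural}$, while $\natural$ identifies monomials bijectively. Using the explicit bijection (\ref{bij}), one checks that $(f^{(\infty)})^\natural \in \Z_+^{\m|\infty}$ is the super-extension (by $i \mapsto i$ in the tail) of an element of $\Z^{\m|n_0'}_{++}$ for a suitable $n_0'$. Corollary~\ref{cor:finite} then applies to give $U_{(f^{(\infty)})^\natural}$ as a finite sum of monomials, and pulling back through $\natural$ yields a \emph{finite} expansion
\begin{equation*}
\mathcal U_{f^{(\infty)}} = \sum_{h \le f^{(\infty)}} \mathfrak u_{h, f^{(\infty)}}(q)\, \mathcal K_h
\end{equation*}
with the same number of monomials as $U_{(f^{(\infty)})^\natural}$.

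With this finite expansion in hand, the rest is straightforward. Each $h$ in the (finite) support lies in $\Z_+^{\m+\infty}$, hence satisfies $h(i) = 1-i$ for $i \gg 0$, so I choose $n_f \ge n_0$ large enough that $h(i+1) = -i$ holds for all $i \ge n_f$, simultaneously for every such $h$. For any $n \ge n_f$, Proposition~\ref{commutativity-reduct}(2) applied to $f^{(\infty)}$ gives $\textsf{Tr}_{\infty,n}(\mathcal U_{f^{(\infty)}}) = \mathcal U_{f^{(n)}}$. Applying the truncation map termwise to the finite expansion above, each $\mathcal K_h$ maps to a nonzero $\mathcal K_{h^{(n)}}$ by the choice of $n_f$, and distinct $h$'s yield distinct truncations since they all agree with $i \mapsto 1-i$ on the tail $i>n$. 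Hence $\mathcal U_{f^{(n)}}$ contains the same number of monomials for every $n \ge n_f$. The only slightly delicate point is the translation of Corollary~\ref{cor:finite} across $\natural$, but the explicit bijection (\ref{bij}) makes this combinatorial check routine.
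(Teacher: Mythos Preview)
Your proof is correct and uses the same ingredients as the paper: the isomorphism $\natural$ together with Corollary~\ref{cor:finite} to control the number of monomials, and the behavior of the truncation maps from Proposition~\ref{commutativity-reduct}. The paper organizes the argument slightly differently---first observing that the monomial count is weakly increasing in $n$ (by truncation) and then bounding it above via Corollary~\ref{cor:finite} and Theorem~\ref{correspondence}(4)---whereas you work directly at $n=\infty$ and truncate down; but the substance is the same.
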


\begin{proof}
By a truncation map argument similar to the proof of
Corollary~\ref{cor:finite}, the number of monomial terms in
$\mathcal U_{f^{(n)}}$ is weakly increasing as $n$ increases. But
this number has to stabilize, since it is bounded according to
Corollary \ref{cor:finite} and Theorem \ref{correspondence} (4).
\end{proof}

\subsection{The category $\OPn$}
\label{sec:categoryO}

Let $n \in \N $. We shall think of ${\mathfrak g\mathfrak l}(m+n)$
as the Lie algebra of complex matrices whose rows and columns are
parameterized by $I(m|n)$. Let $e_{ij}$, $i,j\in I(m|n)$ be the
elementary matrices. We denote by $\mathfrak h_c$ (respectively
$\mathfrak b_c$) the standard Cartan (respectively Borel)
subalgebra of ${\mathfrak g\mathfrak l}(m+n)$, which consists of
the diagonal (respectively the upper triangular) matrices. Let
$\{\delta'_i, i \in I(m|n)\}$ be the basis of $\mathfrak h_c^*$
dual to $\{e_{ii}, i \in I(m|n)\}$. Introduce the Levi subalgebra
$\mathfrak l = \oplus_{i=1}^s {\mathfrak g\mathfrak l}(m_i)\oplus
{\mathfrak g\mathfrak l}(n)$ and the corresponding parabolic
subalgebra $\mathfrak q =\mathfrak l +\mathfrak b_c$ of $\mathfrak
g\mathfrak l(m+n)$. Let $\mathfrak{gl}(m+\infty)
=\lim\limits_{\stackrel{\longrightarrow}{n}}\mathfrak{gl}(m+n)$.

Define the symmetric bilinear form $(\cdot\vert\cdot)_c$ on
$\mathfrak h_c^*$ by
$$(\delta'_i\vert\delta'_j)_c=\delta_{ij}, \qquad i,j \in I(m|n).$$
Let $X_{m+n}$ be the set of integral weights $\la=\sum_{i \in
I(m|n)} \la_i \delta'_i$, $\la_i \in\Z$. Define
\begin{align*}
X^+_{\m+n} &:= \{\la \in X_{m+n} \mid
  \la_{-m} \ge \cdots \ge\la_{-m+m_1-1}, \\
& \qquad\quad \la_{-m+m_1} \ge \cdots \ge \la_{-m+m_1+m_2-1},\\
& \qquad\quad \cdots, \la_{-m_s} \ge \cdots \ge \la_{-1}, \la_1
\ge \cdots \ge \la_n\},   \\
X_{\m+n}^{++} &:= \{\la \in X^+_{\m+n} \mid  \la_n \geq 0 \}.
\end{align*}

We may regard an element $\la$ in $X^{++}_{\m+n}$ as an element in
$X^{++}_{\m+n'}$ for $n'>n$ by adjoining zeros. Set
$$X^{++}_{\m+\infty} \equiv X^{+}_{\m+\infty} :=\lim\limits_{\stackrel{\longrightarrow}{n}} X^{++}_{\m+n}.
$$
For $n \in \N \cup \infty$ define
$$\rho' =-\sum_{i=-m}^{-1}i\delta'_i+\sum_{j=1}^n(1-j)\delta'_j.$$
Define a bijection
\begin{eqnarray}
 X_{\m+n} \longrightarrow \Z^{\m+n}, \qquad \la \mapsto f_\la,
\end{eqnarray}
where $f_\la \in \Z^{\m+n}$ is given by $f_\la (i) = (\la +\rho' |
\delta_i')_c$ for all $i \in I(m|n)$.
This map induces bijections $X^{+}_{\m+n} \rightarrow \Zmpn$, and
$X^{++}_{\m+n} \rightarrow \Z_{++}^{\m+n}$. Using this bijection
we define the notions such as $\epsilon$-weight, partial order
$\leq$, et cetera, for elements in $X^+_{\m+n}$ by requiring them
to be compatible with those defined for elements in $\Zmpn$.

Given $\la \in X_{m+n}^+$, $n\in\N\cup\infty$, we define as usual
the parabolic Verma module
$${\mathcal K}_n(\la)
:=U({\mathfrak g\mathfrak l}(m+n)) \otimes_{U(\mathfrak q)}
L_n^0(\la)$$
and its irreducible quotient ${\mathfrak g\mathfrak
l}(m+n)$-module $\mathcal L_n(\la)$.

Let $n\in\N$. Denote by $\OPn$ the category of finitely generated
$\glmpn$-modules $M$ that are locally finite over $\mathfrak q$,
semisimple over $\mathfrak l$ and
\begin{equation*}
M=\bigoplus_{\g\in X_{m+n}}M_\g,
\end{equation*}
where as usual $M_\g$ denotes the $\g$-weight space of $M$ with
respect to $\mathfrak h_c$. The parabolic Verma module $\mathcal
K_n(\la)$ and the irreducible module $\mathcal L_n(\la)$ for $\la
\in X_{m+n}^+$ belong to $\OPn$. Denote by $\OPPn$ the full
subcategory of $\OPn$ which consists of $\glmpn$-modules $M$ whose
composition factors are isomorphic to $\mathcal L_n(\la)$ with
$\la \in X^{++}_{\m+n}$. Given $M\in\OPn$, we endow the restricted
dual $M^*$ with the usual $\mathfrak g\mathfrak l(m+n)$-module
structure. Further twisting the ${\mathfrak g\mathfrak
l}(m+n)$-action on $M^*$ by the automorphism given by the negative
transpose of $\mathfrak g\mathfrak l(m+n)$, we obtain another
$\mathfrak g$-module denoted by $M^\tau$.

Tilting modules $\mathcal U_n (\la)$ for $\la \in X_{\m+n}^+$ in
$\OPn$ were constructed as in \cite{CoI, So2} and are known to have
Verma flags. The character formula of the tilting module $\mathcal
U_n (\mu)$ in $\OPn$ is given by \cite{So2}: for $\la, \mu \in
X_{\m+n}^+$,
\begin{eqnarray} \label{eq:tiltU}
(\mathcal U_n (\la): \mathcal  K_n(\mu))
 =[\mathcal K_n(-w_0\mu- 2\rho' +2\rho_{\mathfrak l}):
\mathcal L_n(-w_0\la- 2\rho' +2\rho_{\mathfrak l})].
\end{eqnarray}

We remark that for $n\in \N \cup \infty$ the $\mathfrak g\mathfrak
l (m+n)$-module ${\mathcal K}_n(\la)$ is irreducible if and only
if $\la$ is a minimal weight in $X^+_{\m+n}$ in the Bruhat
ordering.

Denote by $\Oi$ the category  of finitely generated
$\mathfrak{gl}(m+\infty)$-modules that are $\mathfrak l$-semisimple,
locally finite over $\mathfrak q\cap \mathfrak{gl}(m+N)$, for every
$N$, and such that the composition factors are of the form $\mathcal
L(\la)$, $\la\in X_{\m+\infty}^+$.

\subsection{Kazhdan-Lusztig theory and (dual) canonical bases}

We will write $\mathfrak l_{g,f}(q), \mathfrak t_{g,f}(q)$ for
$\mathfrak l_{\mu,\la}(q), \mathfrak t_{\mu,\la}(q)$, where $f$,
$g$ correspond to $\la, \mu$, respectively, under the bijection
$X_{m+n}^{+} \rightarrow \Z_+^{m+n}$.

The following is a increasingly better known reformulation, in
terms of dual canonical and canonical bases, of the
Kazhdan-Lusztig conjecture, proved in \cite{BB, BK}, combined with
the translation principle and the character formula of tilting
modules \cite{So2}. The proof in \cite[Theorem~5.4]{CWZ} for the
special case (i.e. $\m =m$) works in the current setup as well
(also cf. Brundan-Kleshchev \cite{BKl}).

\begin{theorem} \label{th:multiCan}
In the Grothendieck group $G(\OPn)$, for $\nu \in X_{m+n}^{+}$, we
have
$$
[\mathcal U_n(\nu)] =\sum_{\mu \in X_{\m+n}^{+}} \mathfrak
u_{\mu,\nu}(1) [\mathcal K_n(\mu)].
$$
\end{theorem}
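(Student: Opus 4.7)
The plan is to combine three ingredients in the spirit of the proof of \cite[Theorem~5.4]{CWZ} for the special case $\m = m$, adapting it to the present general parabolic setup.

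First, I would invoke Soergel's tilting character formula \eqref{eq:tiltU}, which reduces the computation of the tilting multiplicity $(\mathcal U_n(\nu): \mathcal K_n(\mu))$ to the computation of a composition factor multiplicity $[\mathcal K_n(\hat\mu): \mathcal L_n(\hat\nu)]$ at the shifted dominant weights $\hat\mu = -w_0\mu - 2\rho' + 2\rho_{\mathfrak l}$ and $\hat\nu = -w_0\nu - 2\rho' + 2\rho_{\mathfrak l}$. Second, I would invoke the parabolic Kazhdan-Lusztig conjecture for $\gl(m+n)$, proved by Beilinson-Bernstein \cite{BB} and Brylinski-Kashiwara \cite{BK}, which identifies $[\mathcal K_n(\hat\mu): \mathcal L_n(\hat\nu)]$ with the value at $q=1$ of a parabolic Kazhdan-Lusztig polynomial of Deodhar's type for the Hecke algebra $\mathcal H_{m+n}$ with respect to the Young subgroup $S_{\m+n}$.

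Third, I would match the Fock space polynomials $\mathfrak u_{g,f}(q)$ and $\mathfrak l_{g,f}(q)$ defined in \eqref{ulpoly} with these classical parabolic Kazhdan-Lusztig polynomials. The construction of $\mathcal E^{\m+n}$ via Jimbo's quantum Schur-Weyl duality \cite{Jim} together with the $q$-skew-symmetrizers realizes it as a parabolic Hecke module in which the monomial basis $\{\mathcal K_f\}$ corresponds to the standard basis. The uniqueness of canonical and dual canonical bases (bar-invariance plus unitriangularity with respect to the Bruhat order, as in Theorem \ref{th:canonical}) then forces $\{\mathcal U_f\}$ and $\{\mathcal L_f\}$ to coincide with Deodhar's parabolic Kazhdan-Lusztig basis and its dual. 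Combining Soergel's formula with the duality of Remark \ref{dualUK} between $\mathfrak u$- and $\mathfrak l$-polynomials now yields, for $\la, \mu \in X_{\m+n}^+$,
\begin{equation*}
(\mathcal U_n(\nu):\mathcal K_n(\mu)) = \mathfrak u_{\mu,\nu}(1),
\end{equation*}
which is equivalent to the stated identity in $G(\OPn)$ (a finite sum, since $\mathcal U_n(\nu)$ has a finite Verma flag).

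The main obstacle is the third step: tracking the interaction of the involution $-w_0$, the shift by $-2\rho' + 2\rho_{\mathfrak l}$, and the conjugation $f \mapsto -f\cdot w_0$ that governs the duality in Remark \ref{dualUK}, so that the polynomial which finally appears in Soergel's formula is precisely $\mathfrak u_{\mu,\nu}(1)$ and not one of its cousins. The bookkeeping here is essentially formal -- it has been carried out for $\m = m$ in \cite[Theorem~5.4]{CWZ} and for $\m = (1,\ldots,1)$ in Brundan-Kleshchev \cite{BKl} -- and Deodhar's \cite{Deo} treatment of parabolic Kazhdan-Lusztig polynomials works uniformly for an arbitrary Young subgroup $S_{\m+n}$, so no new conceptual input is required to pass to the general $\m$.
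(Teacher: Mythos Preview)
Your proposal is correct and follows essentially the same approach as the paper: the paper does not give a detailed proof but simply remarks that the argument of \cite[Theorem~5.4]{CWZ} for $\m=m$ (cf.~also \cite{BKl}) carries over verbatim to the general parabolic setup, which is exactly the combination of Soergel's tilting formula, the Kazhdan-Lusztig theorem of \cite{BB, BK}, and the identification of the Fock space canonical bases with Deodhar's parabolic KL bases that you outline.
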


Theorem~\ref{th:multiCan} is equivalent to the following character
formula by Remark~\ref{dualUK} and (\ref{eq:tiltU}):
$$
\text{ch} \mathcal L_n(\nu) =\sum_{\mu \in X_{\m+n}^{+}} \mathfrak
l_{\mu,\nu}(1) \text{ch}\, \mathcal K_n(\mu).
$$

Recall the $\epsilon$-weight on $X_{m+n}$ defined in
(\ref{nonsuperweight}). Denote by $\chi_\la$ the central character
associated to $\la \in X_{m+n}$. By Harish-Chandra's theorem
$\chi_\la =\chi_\mu$ for $\la, \mu \in X_{m+n}$ if and only if
$\la =\sigma \cdot \mu$ for some $\sigma \in S_{m+n}$, or
equivalently $\text{wt}^\epsilon(\la)=\text{wt}^\epsilon(\mu)\in
P$. We denote by $\mathcal O^+_\g$ the block in $\OPn$ associated
to $\g \in P$. Let $V$ be the natural $\mathfrak g\mathfrak l
(m+n)$-module and $V^*$ its dual. For $a\in \Z, r\geq 1$ we define
the translation functors $E_a^{(r)}, F_a^{(r)}: \OPn
\longrightarrow \OPn$ by sending $M \in \mathcal O^+_\g$ to
\begin{eqnarray*}
 F_a^{(r)} M := \text{pr}_{\g - r(\epsilon_a
 -\epsilon_{a+1})} (S^rV \otimes M),\quad
 E_a^{(r)} M := \text{pr}_{\g + r(\epsilon_a
 -\epsilon_{a+1})} (S^rV^* \otimes M).
\end{eqnarray*}
Let $\OnDelta$ be the full subcategory of $\OPn$ consisting of all
modules with Verma flags. Let $G(\OnDelta)_{\mathbb
Q}:=G(\OnDelta)\otimes_\Z{\mathbb Q}$ and let $\mathcal
E^{\m+n}|_{q=1}$ be the specialization of $\mathcal E^{\m+n}$ as
$q\to 1$.

\begin{theorem}\label{red-KL-finite}
Let $n \in \N$.
\begin{enumerate}
\item Sending the Chevalley generators $E_a^{(r)}, F_a^{(r)}$
$(a\in\Z, r\ge 1)$ to the translation functors $E_a^{(r)},
F_a^{(r)}$ defines a $\mathcal U_{q=1}$-module structure on
$G(\OnDelta)_{\mathbb Q}$.

\item The linear map $i: G(\OnDelta)_{\mathbb Q} \rightarrow
\mathcal E^{\m+n}|_{q=1}$, which sends $[\mathcal K_n(\la)]$ to
$\mathcal K_{f_\la}(1)$, for each $\la \in X^+_{\m+n}$, is an
isomorphism of $\mathcal U_{q=1}$-modules.

\item The map $i$ sends $[\mathcal U_n(\la)]$ to $\mathcal
U_{f_\la}(1)$, for each $\la \in X^+_{\m+n}$.
\end{enumerate}
\end{theorem}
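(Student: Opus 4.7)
My plan is to prove parts (1) and (2) together as a parabolic adaptation of the Jimbo/Brundan argument in \cite[Theorems~4.28 and 4.29]{Br}, entirely parallel to the super version Theorem~\ref{superChe=tran} already proved in Section~\ref{sec:superRep}, and then to deduce (3) immediately from Theorem~\ref{th:multiCan} together with the definition of the canonical basis.

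For parts (1) and (2), I would first analyze $S^r V \otimes \mathcal{K}_n(\la)$ and $S^r V^* \otimes \mathcal{K}_n(\la)$ as $\glmpn$-modules. Each admits a Verma flag whose subquotients are of the form $\mathcal{K}_n(\la + \eta)$, with $\eta$ ranging over the weights of $S^r V$ (respectively $S^r V^*$), taken with appropriate multiplicities and in a PBW-style order; this is the parabolic variant of the standard tensor identity for Verma modules in category $\mathcal{O}$. Next, I would use Harish-Chandra's description of central characters via $\epsilon$-weights to show that the projection $\pr_{\g - r(\epsilon_a-\epsilon_{a+1})}$ selects precisely those summands $\mathcal{K}_n(\la+\eta)$ whose associated column configuration in the Fock space picture differs from that of $\la$ by moving $r$ boxes from position $a+1$ to position $a$ (and dually for $E_a^{(r)}$). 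Finally, on the Fock space side, I would expand $F_a^{(r)} \mathcal{K}_{f_\la}(1)$ and $E_a^{(r)} \mathcal{K}_{f_\la}(1)$ using the $s$-th iterated comultiplication $\Delta^s$ applied to the divided powers, and verify that the coefficients specialized at $q=1$ match the Verma-flag multiplicities on the representation-theoretic side. This will simultaneously yield (1), namely that the translation functors induce a $\mathcal{U}_{q=1}$-action on $G(\OnDelta)_{\mathbb Q}$, and (2), namely that $i$ intertwines the two actions (the fact that $i$ is a $\mathbb{Q}$-linear isomorphism is automatic since both sides have the $\mathcal{K}$-basis indexed by $X^+_{\m+n}$).

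For (3), I simply apply the isomorphism $i$ of (2) to the identity in Theorem~\ref{th:multiCan} and then compare with the $q \to 1$ specialization of (\ref{ulpoly}):
\[
i([\mathcal{U}_n(\la)]) = \sum_{\mu \in X_{\m+n}^+} \mathfrak{u}_{\mu,\la}(1)\, i([\mathcal{K}_n(\mu)]) = \sum_{g \in \Z_+^{\m+n}} \mathfrak{u}_{g,f_\la}(1)\, \mathcal{K}_g(1) = \mathcal{U}_{f_\la}(1).
\]
Thus part (3) is a formal consequence of Theorem~\ref{th:multiCan} once (1) and (2) are in place.

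The main obstacle lies entirely in (1) and (2): one must carefully verify the combinatorial matching between the central-character projections applied to $S^rV \otimes \mathcal{K}_n(\la)$ and $S^rV^* \otimes \mathcal{K}_n(\la)$ and the coefficients arising from $\Delta^s(F_a^{(r)})$ and $\Delta^s(E_a^{(r)})$ on monomials in $\mathcal{E}^{\m+n}$, including tracking the PBW ordering in the parabolic Verma flag and the way the divided powers interact with the $s$-fold tensor structure on the Fock space side. This is essentially the same bookkeeping already carried out in \cite[\S4]{Br} and in the proof of Theorem~\ref{superChe=tran} above (the classical case is in fact combinatorially easier, since all tensor factors are copies of $\mathbb{V}$), so no genuinely new conceptual input is required beyond the classical KL conjecture encoded in Theorem~\ref{th:multiCan}.
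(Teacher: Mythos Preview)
Your proposal is correct and follows essentially the same approach as the paper: the paper likewise proves (1) and (2) together by noting that $i$ is a vector space isomorphism and that the action of the translation functors on parabolic Verma modules matches the action of the divided powers of the Chevalley generators on the monomial basis (exactly the combinatorial check you outline), and then deduces (3) from Theorem~\ref{th:multiCan} and the definition of the polynomials $\mathfrak u_{\mu,\nu}$.
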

Equivalently, for $\la, \mu \in X^+_{\m+n}$, we have
\begin{eqnarray*}
(\mathcal U_n(\la): \mathcal K_n(\mu)) &=& \mathfrak u_{\mu,\la}(1)  \\
{[}\mathcal K_n(\la): \mathcal L_n(\mu)] &=& \mathfrak u_{-w_0\la-
2\rho'+2\rho_{\mathfrak l}, -w_0\mu-2\rho'+2\rho_{\mathfrak
l}}(1).
\end{eqnarray*}

\begin{proof} The map $i$ is certainly a vector space isomorphism.
One checks that the action of the translation functors on the
parabolic Verma modules is compatible with the action of the
divided powers of the Chevalley generators of $\mathcal U_{q=1}$
on the monomial basis. Thus (1) and (2) follow.  Now (3) follows
from Theorem \ref{th:multiCan} and the definition of KL
polynomials $\mathfrak u_{\mu,\nu}$ and $\mathfrak l_{\mu,\nu}$.
\end{proof}

\subsection{The case as $n \mapsto \infty$}
By studying truncation functors ${\textsf{Tr}}$ for $\OPn$ with
varying $n$, analogous to Subsection~\ref{subsec:truncat} (cf.
\cite{Don}), we can establish the counterparts of
Subsection~\ref{subsec:inftytilt}.

The following theorem should be compared to Theorem~\ref{tiltInf}.
Note that Corollary~\ref{cor:finite-reduct} is used in proving (4)
below.
\begin{theorem}\label{tiltInf:red}
Let $\la\in X^{+}_{\m+\infty}$.
\begin{enumerate}
\item There exists a unique tilting module $\mathcal U(\la)$ in
$\Oi$ with $\mathcal K(\la)$ sitting at the bottom of a Verma
flag. Moreover, $\mathcal U(\la) = \cup_n \mathcal U_n(\la)$.

\item The functor ${\textsf{tr}}_{n}$ sends $\mathcal U (\la)$ to
 $\mathcal U_n(\la)$ if $(\la|\delta_{n+1})_c=0$ and to $0$ otherwise.

\item We have $(\mathcal U(\la):\mathcal K(\mu))=(\mathcal
U_n(\la):\mathcal K_n(\mu))$ for $n\gg0$.

\item The Verma flag lengths for $\mathcal U(\la)$ and $\mathcal
U_n(\la)$ for $n\gg 0$ are the same (and finite).
\end{enumerate}
\end{theorem}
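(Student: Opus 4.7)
The plan is to mirror the proof of Theorem \ref{tiltInf} on the super side step by step, replacing each input by its classical parabolic-$\mathcal O$ counterpart. Concretely, under the natural inclusions $\mathfrak{gl}(m+n) \subset \mathfrak{gl}(m+n+1)$ one obtains $\mathcal U_n(\la) \subseteq \mathcal U_{n+1}(\la)$, so I would define $\mathcal U(\la) := \bigcup_n \mathcal U_n(\la)$ as a $\mathfrak{gl}(m+\infty)$-module and verify the four claims in turn.

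The first and most substantial step is the classical analogue of Proposition \ref{nestedtilt3}: for $\la \in X_{\m+n+1}^{++}$, the truncation functor $\textsf{tr}_n$ sends $\mathcal U_{n+1}(\la)$ to $\mathcal U_n(\la)$ if $(\la|\delta_{n+1}')_c = 0$ and to $0$ otherwise. The argument parallels the super one: a straightforward analogue of Lemma \ref{lem:domin} for the classical Bruhat ordering (standard) ensures that $\textsf{tr}_n$ preserves Verma flags, the duality $\tau$ then shows it preserves dual Verma flags, and the Ext-vanishing criterion (the classical version of Proposition \ref{flagExt}, provable identically) forces $\textsf{tr}_n(\mathcal U_{n+1}(\la))$ to have a dual Verma flag with $\mathcal K_n(\la)$ at the bottom. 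Indecomposability follows from a Donkin-type surjectivity theorem for $\textsf{tr}_n$ on Hom-spaces and the resulting locality of $\mathrm{End}(\textsf{tr}_n \mathcal U_{n+1}(\la))$ as a quotient of the local algebra $\mathrm{End}(\mathcal U_{n+1}(\la))$. The classical counterpart of Proposition \ref{truntilt} --- stabilization of $(\mathcal U_n(\la):\mathcal K_n(\mu))$ as $n$ grows --- is then a direct consequence, which is essentially (3).

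Given these truncation and stabilization properties, parts (1) and (2) are obtained in the same manner as the super analogues, reducing to the argument of \cite[Theorem~3.16]{CWZ}: tiltedness of $\mathcal U(\la)$ is inherited from the finite-level pieces by passing Ext-vanishing to the direct limit, and uniqueness is the standard Ringel-type argument once a finite Verma flag is known. Part (4), the finiteness of the Verma flag length, is exactly where Corollary \ref{cor:finite-reduct} intervenes: Theorem \ref{red-KL-finite}(3) identifies the Verma-flag length of $\mathcal U_n(\la)$ with the number of monomials in the canonical basis element $\mathcal U_{f_\la^{(n)}} \in \widehat{\mathcal E}^{\m+n}$, and Corollary \ref{cor:finite-reduct} asserts precisely that this integer stabilizes to a finite bound as $n \to \infty$. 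The main obstacle is the indecomposability step in the classical analogue of Proposition \ref{nestedtilt3}: one must verify the Donkin-type surjectivity of $\mathrm{Hom}_{\m+n+1}(M,N) \to \mathrm{Hom}_{\m+n}(\textsf{tr}_n M, \textsf{tr}_n N)$ for $M$ with a finite Verma flag and $N$ with a finite dual Verma flag, which in turn rests on the orthogonality $\mathrm{Hom}(\mathcal K_n(\la), \mathcal K_n(\mu)^\tau) \cong \delta_{\la,\mu}\mathbb{C}$ and a double induction on (dual) Verma length. All of these are within reach of standard parabolic-$\mathcal O$ techniques, so the remainder of the proof is routine translation.
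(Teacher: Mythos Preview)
Your proposal is correct and follows essentially the same approach as the paper: the paper explicitly states that the theorem is proved by establishing the classical counterparts of Subsection~\ref{subsec:inftytilt} via truncation-functor arguments analogous to Subsection~\ref{subsec:truncat} (with the Donkin reference for the indecomposability step), and singles out Corollary~\ref{cor:finite-reduct} as the key input for part~(4). Your identification of Theorem~\ref{red-KL-finite}(3) as the bridge between Verma-flag lengths and monomial counts in $\mathcal U_{f_\la^{(n)}}$, so that Corollary~\ref{cor:finite-reduct} supplies the needed finite bound, is exactly the mechanism the paper has in mind and is what distinguishes the classical argument from the super one (where Proposition~\ref{series} suffices directly).
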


The following proposition follows from Theorem~\ref{th:multiCan},
Corollary~\ref{cor:finite-reduct}, and the properties of the
truncation maps/functors.

\begin{proposition}
Let $n >n_0$ and $\la \in X_{\m +n_0}^{++}$. Extend $\la$ to
$\la^{(n)} \in X_{\m+n}^{++}$ by letting $\la^{(n)} (i)=0$ for
$n_0<i\le n$. Then, there exists $n_\la \gg 0$ such that the Verma
flag structure of $\mathcal U_{\la^{(n)}}$ is independent of $n
\ge n_\la$.
\end{proposition}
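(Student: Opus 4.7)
The plan is to reduce the statement to the Fock-space stability result already established in Corollary~\ref{cor:finite-reduct} and then transport the conclusion back to the category $\OPn$ via the identification $(\mathcal U_n(\la^{(n)}):\mathcal K_n(\mu))=\mathfrak u_{f_\mu,f_{\la^{(n)}}}(1)$ furnished by Theorem~\ref{red-KL-finite}(3). The truncation map on the Fock side (Proposition~\ref{commutativity-reduct}) is the bridge that lets us detect stability at the level of Kazhdan--Lusztig polynomial values.

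First I would translate to the Fock space. Writing $f:=f_\la\in\Z^{\m+n_0}_{++}$, the formula $f_{\la^{(n)}}(i)=(\la^{(n)}+\rho'|\delta'_i)_c = \la^{(n)}_i+1-i$ together with $\la^{(n)}_i=0$ for $n_0<i\le n$ gives $f_{\la^{(n)}}(i)=1-i$ for such $i$; hence $f_{\la^{(n)}}$ is precisely the extension $f^{(n)}$ appearing in Corollary~\ref{cor:finite-reduct}. Apply that corollary to obtain $n_\la\ge n_0$ such that the (finite) number of monomials $\mathcal K_g$ with nonzero coefficient in $\mathcal U_{f^{(n)}}$ is independent of $n\ge n_\la$.

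Next I would combine this with the truncation behavior of the canonical basis. For $n'>n\ge n_\la$, Proposition~\ref{commutativity-reduct}(2) gives $\textsf{Tr}_{n',n}(\mathcal U_{f^{(n')}})=\mathcal U_{f^{(n)}}$, while the monomial counts agree by choice of $n_\la$. Consequently every $g$ supporting $\mathcal U_{f^{(n')}}$ must satisfy the tail condition $g(i+1)=-i$ for all $i\ge n$, so $g=h^{(n')}$ for a unique $h\in\Z^{\m+n_\la}_{+}$ supporting $\mathcal U_{f^{(n_\la)}}$, and Proposition~\ref{commutativity-reduct}(3) yields
$$\mathfrak u_{h^{(n)},f^{(n)}}(q)=\mathfrak u_{h,f^{(n_\la)}}(q),\qquad n\ge n_\la.$$
Translating back via the bijection $X^{+}_{\m+n}\leftrightarrow\Z^{\m+n}_+$, each $h$ comes from a unique $\mu_0\in X^{++}_{\m+n_\la}$ extended to $X^{++}_{\m+n}$ by zeros, and Theorem~\ref{red-KL-finite}(3) then specializes at $q=1$ to
$$(\mathcal U_n(\la^{(n)}):\mathcal K_n(\mu_0^{(n)}))=\mathfrak u_{h,f^{(n_\la)}}(1),\qquad n\ge n_\la,$$
which is independent of $n$. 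Since the support is finite and stable, the entire Verma flag structure of $\mathcal U_n(\la^{(n)})$ is independent of $n\ge n_\la$.

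The substantive input here is Corollary~\ref{cor:finite-reduct}, whose own proof routes through the Fock-space isomorphism $\natural:\widehat{\mathcal E}^{\m+\infty}\to\widehat{\mathcal E}^{\m|\infty}$ of Theorem~\ref{correspondence} and the finiteness of $U_f$ guaranteed by the positivity result of Theorem~\ref{th:BrConj}; granted those, the remaining content is bookkeeping. The main obstacle is therefore not in the present proof at all, but rather in verifying cleanly that weight extension by zero on the classical side matches the extension $f\mapsto f^{(n)}$ on the Fock side (which is immediate from the formula for $\rho'$) and that the truncation bijection between supports respects the correspondence $\mu\leftrightarrow f_\mu$.
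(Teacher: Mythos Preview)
Your proof is correct and follows exactly the route the paper indicates: the paper merely states that the proposition ``follows from Theorem~\ref{th:multiCan}, Corollary~\ref{cor:finite-reduct}, and the properties of the truncation maps/functors,'' and you have faithfully unpacked precisely those ingredients (noting that Theorem~\ref{red-KL-finite}(3) is the restatement of Theorem~\ref{th:multiCan}). The verification that extending $\la$ by zeros corresponds to extending $f_\la$ by the tail $f^{(n)}(i)=1-i$ is the one bookkeeping point the paper leaves implicit, and you have handled it correctly.
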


\subsection{A general super duality conjecture}\label{sec:isom}

Based on Conjecture~\ref{conj:BrKL}, Theorems~\ref{red-KL-finite}
and \ref{correspondence} we propose the following conjecture which
generalizes \cite[Conjecture~6.10]{CWZ}, which will be referred to
as the {\em general super duality conjecture}.

\begin{conjecture} \label{conj:duality}
For a tuple of positive integers $\m$, the categories $\Fi$ and
$\Oi$ are equivalent.
\end{conjecture}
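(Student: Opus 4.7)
The plan is to construct an equivalence $T: \Oi \to \Fi$ sending $\mathcal K(\la) \mapsto K(\la^\natural)$, $\mathcal L(\la) \mapsto L(\la^\natural)$, and $\mathcal U(\la) \mapsto U(\la^\natural)$, with $\la \mapsto \la^\natural$ the bijection $X_{\m+\infty}^+ \to X_{\m|\infty}^+$ of Subsection~4.2. The Fock-space isomorphism $\natural$ of Theorem~\ref{correspondence}, together with its compatibility with canonical and dual canonical bases, should be the decategorified shadow of $T$.

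My first step is to verify that both $\Fi$ and $\Oi$ are highest weight categories (in an appropriate infinite-rank sense) with standard objects the parabolic Verma modules, costandard objects their $\tau$-duals, and indecomposable tilting objects $U(\la)$ and $\mathcal U(\la)$. This is essentially packaged by Theorems~\ref{tiltInf} and \ref{tiltInf:red} together with Proposition~\ref{flagExt}, and by Lemma~\ref{lem:move} the bijection $\natural$ respects the weight orderings. Following the Soergel--Ringel philosophy, an equivalence $T$ is then the same as an isomorphism of the endomorphism algebras of full tilting objects $\bigoplus_\la \mathcal U(\la)$ and $\bigoplus_\la U(\la^\natural)$. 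Together with the truncation functors $\mathfrak{tr}_n$ and $\textsf{tr}_n$ (Proposition~\ref{nestedtilt3} and its classical counterpart), the stability results suggest approximating $T$ by a compatible family of block-wise functors on truncated subcategories; the combinatorial input $\mathfrak u_{g,f}(q) = u_{g^\natural, f^\natural}(q)$ of Theorem~\ref{correspondence} gives a candidate matching of Verma-flag multiplicities of corresponding tiltings.

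The main obstacle, and the reason this conjecture remains open, is matching the full $\mathrm{Hom}$ and $\mathrm{Ext}$ structure rather than only Grothendieck-group data. Even the equality $[K(\la):L(\mu)] = [\mathcal K(\la^\natural):\mathcal L(\mu^\natural)]$ on composition multiplicities is equivalent to the parabolic Brundan--Kazhdan--Lusztig Conjecture~\ref{conj:BrKL}, which is presently inaccessible because no geometric model for category $\mathcal O$ over $\glmn$ is known. A structural route that bypasses explicit character formulas is to realise $\Fi$ and $\Oi$ as two $\mathfrak{sl}_\infty$-categorifications of $\widehat{\mathcal E}^{\m|\infty}|_{q=1}$ via their translation functors $E_a^{(r)}, F_a^{(r)}$, and then invoke a uniqueness theorem for integrable highest-weight categorifications; the $\mathfrak{sl}_2$-case of Chuang--Rouquier \cite{CR}, exploited in Section~\ref{canonical:11n} to settle $\m=(1,1)$, is the small-rank prototype, and the hard part will be formulating and proving such a rigidity statement in infinite rank compatibly with the highest-weight structures on both sides.
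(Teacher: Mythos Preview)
The statement is a \emph{conjecture} in the paper, not a theorem; the paper offers no proof and explicitly says (see the Introduction and Remark following Conjecture~\ref{conj:duality}) that the super duality conjecture ``presently appears to lie beyond our reach.'' So there is no paper proof to compare against.

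Your proposal is not a proof either, and you are forthright about this: you sketch two plausible strategies (matching tilting endomorphism algebras \`a la Ringel--Soergel, and a uniqueness theorem for $\mathfrak{sl}_\infty$-categorifications extending Chuang--Rouquier) and correctly identify the obstruction, namely that Theorem~\ref{correspondence} only matches Grothendieck-group data, while an equivalence of categories requires control of $\mathrm{Hom}$ and $\mathrm{Ext}$. This diagnosis is accurate and consistent with the paper's own remarks. One small correction: you write that the equality of composition multiplicities is ``equivalent'' to Conjecture~\ref{conj:BrKL}; in fact the paper shows only one implication (validity of the super duality conjecture implies the parabolic BKL conjecture via Theorem~\ref{correspondence} and the truncation compatibilities), not the converse.
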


\begin{remark}
We regard Conjecture~\ref{conj:duality} as a pointer toward a
profound connection between representation theories of Lie algebras
and Lie superalgebras. One should keep in mind some variations of
the conjecture such as an isomorphism of the full subcategories of
modules with Verma flags, or an equivalence of derived categories,
et cetera.

The validity of Conjecture~\ref{conj:duality} implies the validity
of the parabolic Brundan-Kazhdan-Lusztig Conjecture~\ref{conj:BrKL},
by using the Kazhdan-Lusztig theory for $\mathfrak{gl}(m+n)$ as
formulated in Theorem~\ref{red-KL-finite} and the properties of
truncation maps/functors (see Corollary~\ref{aux43} and
Proposition~\ref{nestedtilt3}). In particular, the original Brundan
conjecture for the full category $\mathcal O$ of $\gl(m|1)$-modules
(cf. Remark~\ref{rem:genO}) would follow from the super duality
conjecture.
\end{remark}

\section{Application of the Chuang-Rouquier $\mathfrak{sl}_2$-categorification}
\label{sec:categorification}

\subsection{The $\mathfrak{sl}_2$-categorification and category $\FPn$}

This subsection is a super analogue of Chuang-Rouquier
\cite[7.4]{CR}.

Let $\{u_i\}$ be a $\Z_2$-homogeneous basis of $\mathfrak g
=\glmn$, and $\{u^i\}$ be its dual basis with respect to the
supersymmetric bilinear form $\langle a,b \rangle := \text{str}
(ab)$, where $ab$ denotes the matrix multiplication of $a,b \in
\glmn$. The Casimir $C:= \sum_i (-1)^{|u_i|} u_i u^i$ lies in the
center of the enveloping algebra $U(\mathfrak g)$. By means of the
standard matrix elements, we readily see that
$$C = \sum_{i,j\in I(m|n)} (-1)^{\bar{j}} e_{ij} e_{ji}.
$$
Recall that $\bar{j} =0$ if $j<0$ and $\bar{j}=1$ if $j>0$. Denote
by $\{x_i\}_{i\in I(m|n)}$ the standard basis for the natural
$\mathfrak g$-module $V$, and set $|x_i|=\bar{i}$.

Given a $\mathfrak g$-module $M$, we let $X_M \in
\text{End}_{\mathfrak g} (V \otimes M)$ the adjoint map associated
to the action map $\mathfrak g \times M \rightarrow M$ (by
identifying $\mathfrak g =\text{End} (V)$). It follows that
$$ X( v \otimes m) = \Omega (v \otimes m),$$
where
$$ \Omega =\sum_{i,j\in I(m|n)} (-1)^{\bar{j}} e_{ij} \otimes e_{ji}.$$
This defines an endomorphism $X$ of the functor $V\otimes -$. One
verifies that (with all the superalgebra signs cancelling)
\begin{eqnarray} \label{eq:Casimir}
\Omega =\hf (\Delta_{\mathfrak g} (C) - C\otimes 1 - 1 \otimes C),
\end{eqnarray}
where $\Delta_{\mathfrak g}$ denotes the coproduct on $U(\mathfrak
g)$. We also define
$$T_M \in \text{End}_{\mathfrak g} (V \otimes V
\otimes M),
 \qquad v \otimes v' \otimes m \mapsto (-1)^{|v||v'|}
v'\otimes v \otimes m.
$$
This defines an endomorphism $T$ of the functor $V\otimes V
\otimes -$.

Recall that the {\em degenerate affine Hecke algebra} $H_\ell$ is
an algebra generated by $X_i (i =1,\ldots, \ell)$ and $s_i (i
=1,\ldots, \ell-1)$, subject to the following relations:
\begin{eqnarray*}
s_i^2 =1,\quad s_i s_{i+1} s_i &=& s_{i+1} s_i s_{i+1},  \nonumber\\
s_i s_j &=& s_j s_i,\quad |i-j|>1,    \nonumber \\
x_j s_i &=& s_i x_j, \quad (j \neq i,i+1),  \nonumber \\
x_{i+1} s_i -s_i x_i &=& 1, \label{hecke} \\
x_i x_j &=& x_j x_i, \quad (i \neq j). \nonumber
\end{eqnarray*}

The following is a super generalization of a theorem of
Arakawa-Suzuki \cite{AS}.

\begin{proposition} \label{Heckemodule}
There is an algebra homomorphism
\begin{align*}
H_\ell  & \longrightarrow \text{End}_{\mathfrak g} (V^{\otimes
\ell}
\otimes M), \\
s_i & \mapsto {\bf 1}_V^{\otimes \ell -i} \otimes T_{V^{\otimes
{i-1}} \otimes M}, \qquad
x_i \mapsto {\bf 1}_V^{\otimes \ell-i} \otimes X_{V^{\otimes
i-1}\otimes M}.
\end{align*}
\end{proposition}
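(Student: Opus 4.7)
The plan is to verify one-by-one the defining relations of $H_\ell$ at the level of endomorphisms of $V^{\otimes \ell} \otimes M$. The relations fall into four groups, three of which are ``formal" and one of which carries all the computational content.

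The first group consists of the relations purely among the $s_i$'s: $s_i^2 = 1$, the braid relation $s_is_{i+1}s_i = s_{i+1}s_is_{i+1}$, and $s_is_j = s_js_i$ for $|i-j|>1$. These reduce to the standard facts that the super flip $T$ is an involution and satisfies the braid relation on $V^{\otimes 3}$ (which holds because the super flip is the trivial $R$-matrix solution to the Yang-Baxter equation), together with the trivial commutation of operators acting on disjoint tensor slots. The mixed commutation $x_js_i = s_ix_j$ for $j \notin \{i,i+1\}$ is of the same character: $x_j$ and $s_i$ act through disjoint tensor factors, modulo their common $\mathfrak{g}$-action on $M$, and $T$ is $\mathfrak{g}$-linear (by the coproduct formula (\ref{eq:Casimir}) and cocommutativity of the super flip), so they commute.

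Next I would treat the commutativity $x_ix_j = x_jx_i$. The crucial general fact is that $X_N \in \operatorname{End}_{\mathfrak{g}}(V\otimes N)$ for any $\mathfrak{g}$-module $N$; this is visible directly from (\ref{eq:Casimir}) since $\Omega$ is expressed in terms of the central element $C$ and the coproduct. Iterating, $x_i$ and $x_j$ are both $\mathfrak{g}$-linear on their respective outer slots, and one checks that the remaining terms commute from the commutativity of the undeformed action.

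The main step, and the main obstacle, is the degenerate Hecke relation $x_{i+1}s_i - s_ix_i = 1$. Reducing to the case of $V \otimes V \otimes N$ with $N = V^{\otimes i-1} \otimes M$, this amounts to showing
\[
X_{V \otimes N} \circ T_N - T_N \circ (1_V \otimes X_N) = \operatorname{id}.
\]
Applying (\ref{eq:Casimir}) to decompose $X_{V\otimes N}$ via the triple coproduct, one obtains $X_{V\otimes N} = \Omega_{12} + \Omega_{1,N}^{\mathrm{cop}}$, where the first term acts on the two $V$'s alone and the second acts through $V_1$ and $N$ with a super-sign arising from passing through $V_2$. A direct but sign-sensitive calculation shows
\[
\Omega_{12}(x_a \otimes x_b) = \sum_{i,j}(-1)^{\bar{j} + (\bar{i}+\bar{j})\bar{a}} \delta_{ja}\delta_{ib}\, x_i \otimes x_j = (-1)^{\bar{a}\bar{b}}\, x_b \otimes x_a = T(x_a \otimes x_b),
\]
so $\Omega_{12} = T$ on $V \otimes V$. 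A parallel computation identifies $\Omega_{1,N}^{\mathrm{cop}}$ with $T_N \circ (1_V \otimes X_N) \circ T_N$; substituting and using $T^2 = 1$ gives precisely the desired $\operatorname{id}$ on the right-hand side.

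The hard part is thus entirely the bookkeeping of super signs: verifying that the graded tensor-product sign conventions, the super Casimir, and the coproduct conspire so that $\Omega_{12}$ really equals the super flip $T$ (and not $T$ twisted by some extra parity), and that the ``mixed" piece $\Omega_{1,N}^{\mathrm{cop}}$ is conjugate to $1_V \otimes X_N$ by $T_N$ with no residual sign. Once these two sign identities are confirmed, the remainder of the argument reproduces the classical Arakawa--Suzuki computation verbatim.
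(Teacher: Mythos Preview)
Your proposal is correct and follows essentially the same route as the paper: both reduce everything to the single relation $x_{i+1}s_i - s_ix_i = 1$, rewrite it as the identity $X_{V\otimes N}\circ T_N - T_N\circ(1_V\otimes X_N) = \mathrm{id}$ on $V\otimes V\otimes N$, and verify it via the explicit form of $\Omega$. The only difference is organizational: the paper expands $X_{V\otimes M}T_M(x_a\otimes x_b\otimes m)$ directly on basis vectors and splits the resulting sum into the two pieces you call $\Omega_{12}$ and $\Omega_{1,N}^{\mathrm{cop}}$, whereas you first make that operator decomposition $X_{V\otimes N}=\Omega_{12}+\Omega_{13}$ and then identify $\Omega_{12}=T_N$ and $\Omega_{13}=T_N(1_V\otimes X_N)T_N$ separately. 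Both computations are the same sign-chase, just packaged at the operator level versus the coordinate level.
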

\begin{proof}
All the relations are straightforward to check except
(\ref{hecke}). The relation (\ref{hecke}) is equivalent to the
following identity in $\text{End}_{\mathfrak g} (V \otimes V
\otimes M)$ for $\mathfrak g$-module $M$:
$$ T_M \circ ({\bf 1}_V \otimes X_M) = X_{V\otimes M} \circ T_M -
{\bf 1}_{V\otimes V \otimes M}.
$$

Indeed, given $a, b \in I(m|n)$, we calculate that
\begin{align*}
X_{V\otimes M}T_M (x_a \otimes & x_b \otimes m)
= (-1)^{\bar{a}\bar{b}} X_{V\otimes M} (x_b \otimes x_a \otimes m) \\
&= \sum_{i,j \in I(m|n)}
(-1)^{\bar{a}\bar{b}+\bar{j}+(\bar{i}+\bar{j})\bar{b}} e_{ij} x_b
\otimes e_{ji}
(x_a \otimes m) \\
&= \sum_{i,j \in I(m|n)}
(-1)^{\bar{a}\bar{b}+\bar{j}+(\bar{i}+\bar{j})\bar{b}} e_{ij} x_b
\otimes e_{ji}
x_a \otimes m \\
& + \sum_{i,j \in I(m|n)}
(-1)^{\bar{a}\bar{b}+\bar{j}+(\bar{i}+\bar{j})(\bar{b}+\bar{a})}
e_{ij} x_b \otimes x_a \otimes e_{ji} m
\\
&= x_a \otimes x_b \otimes m + T_M \sum_{i,j \in I(m|n)}
(-1)^{\bar{j}+(\bar{i}+\bar{j})\bar{b}}
x_a \otimes e_{ij} x_b \otimes e_{ji} m \\
&=\big ({\bf 1}_{V\otimes V \otimes M} +T_M \circ ({\bf 1}_V
\otimes X_M) \big) (x_a \otimes x_b \otimes m).
\end{align*}
\end{proof}

We write $\la \rightarrow_a \mu$ if there exists $i \in I(m|0)$
such that $\la_i -i =a, \mu_i-i =a+1$, or if there exists $i \in
I(0|n)$ such that $-\la_i+i =a+1, -\mu_i+i=a$, and in addition,
$\la_j =\mu_j$, for all $j \neq i$. Given two (integral) blocks
$\mathcal O^+_\g, \mathcal O^+_{\g'}$ in the category $\FPn$
corresponding to $\g, \g' \in P$, we write $\g\rightarrow_a \g'$
if there exists $\la, \mu \in \mathfrak h^*$ such that $K_n(\la)
\in \mathcal O^+_\g$ and $K_n(\mu) \in \mathcal O^+_{\g'}$. Denote
by $\pr_\g$ the projection onto the block $\mathcal O^+_\g$. We
can rewrite the translation functors $F_a$ (\ref{eq:transl}) as
$$
F_a = \bigoplus_{\g,\g': \g\rightarrow_a \g'} \pr_{\g'} \circ (V
\otimes -) \circ \pr_\g.
$$

\begin{proposition} \label{trans=casim}
The translation functor $F_a$ can be identified with the
generalized $(a-m)$-eigenspace of $X$ acting on $V \otimes -$.
\end{proposition}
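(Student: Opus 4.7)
The plan is to exploit formula (\ref{eq:Casimir}), which expresses $X = \Omega$ as $\tfrac12(\Delta_{\mathfrak g}(C) - C \otimes 1 - 1 \otimes C)$. Since $C$ is central in $U(\glmn)$, Brundan's characterization of central characters via $\epsilon$-weights implies that $C$ acts with a unique generalized eigenvalue $c_\g$ on every object of the block $\mathcal O^+_\g$, and by a scalar $c_V$ on the natural module $V$. Consequently, on each block component $\pr_{\g'}(V \otimes M)$ of $V \otimes M$ (for $M \in \mathcal O^+_\g$), the operator $X$ has a single generalized eigenvalue $\tfrac12(c_{\g'} - c_\g - c_V)$.

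The technical core is to verify that this eigenvalue equals $a - m$ precisely when $\g \to_a \g'$. Evaluating $C = \sum_{i,j}(-1)^{\bar j}e_{ij}e_{ji}$ on the highest weight vector of $K_n(\la)$, via the super-bracket identity
\begin{equation*}
e_{ij}e_{ji} = \bigl(e_{ii} - (-1)^{\bar i + \bar j}e_{jj}\bigr) + (-1)^{\bar i + \bar j}e_{ji}e_{ij}
\end{equation*}
together with the vanishing of raising operators on highest weight vectors, yields a closed expression for $c_\la$ in terms of $\la_i$, $|i|$, $\bar i$, $m$, $n$. Specializing to $\la = \delta_{-m}$ gives $c_V = m - n$, which one may cross-check by computing $C\cdot x_{-m}$ directly. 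For $\la \to_a \mu = \la + \delta_k$, a short case analysis --- $k < 0$ (where $\la_k - k = a$) versus $k > 0$ (where $k - \la_k = a + 1$) --- produces
\begin{equation*}
c_\mu - c_\la = 2a - m - n
\end{equation*}
in both cases, and therefore $\tfrac12(c_\mu - c_\la - c_V) = a - m$, as needed.

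To conclude, every irreducible subquotient of $V \otimes K_n(\la)$ has highest weight of the form $\la + \delta_k$ for some $k \in I(m|n)$, and each such $k$ uniquely determines an $a \in \Z$ with $\la \to_a \la + \delta_k$; hence every block component of $V \otimes M$ is precisely $F_a M$ for a uniquely determined $a$. Since distinct values of $a$ produce distinct generalized eigenvalues $a - m$ for $X$, the generalized eigenspace decomposition of $X$ on $V \otimes M$ coincides with the block decomposition $\bigoplus_a F_a M$, yielding the proposition. I do not anticipate any serious obstacle; the main care required lies in the super-sign bookkeeping in the Casimir computation, for which the identity $c_V = m - n$ provides a useful consistency check.
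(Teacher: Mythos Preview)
Your proposal is correct and follows essentially the same route as the paper: both exploit (\ref{eq:Casimir}) to reduce the question to the Casimir eigenvalues on the Verma subquotients $K_n(\la+\delta_i)$ of $V\otimes K_n(\la)$, and then carry out the same case analysis $i<0$ versus $i>0$. The only cosmetic difference is that the paper quotes $c_\la=\langle\la+2\rho,\la\rangle$ with its particular $\rho$ (giving $c_{\delta_{-m}}=2m+1$ rather than your directly computed $m-n$), but since that $\rho$ differs from the Weyl vector by a multiple of the supertrace weight, the discrepancy cancels in the combination $\tfrac12(c_{\la+\delta_i}-c_\la-c_{\delta_{-m}})$ and both computations land on $a-m$.
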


\begin{proof}
It suffices to check the proposition on a parabolic Verma module
$K_n(\la)$. The Casimir acts on $K_n(\la)$ as the scalar
multiplication by
$c_\la := \langle \la+2 \rho, \la \rangle.$ By (\ref{eq:Casimir}),
$\Omega$ acts on a subquotient $K_n(\la +\delta_i)$ in $V\otimes
K_n(\la)$ (where we recall $V =L_n(\delta_{-m})$) as the
multiplication by
\begin{align*}
\hf & (c_{\la +\delta_i} -c_\la -c_{\delta_{-m}}) \\
& = \hf ( \langle \la +\delta_i+2 \rho, \la +\delta_i \rangle -
\langle \la+2 \rho, \la \rangle
- \langle \delta_{-m}+2 \rho, \delta_{-m} \rangle ) \\
&= \langle \la, \delta_i\rangle + \hf \langle \delta_i,
\delta_i\rangle -\hf \langle \delta_{-m}, \delta_{-m} \rangle
-\langle \rho, \delta_{-m} -\delta_i \rangle \\
&= \left\{
\begin{array}{ll}
\la_i -i-m, & \quad \text{if } i \in I(m|0) \\
-\la_i +i-m-1, & \quad \text{if } i \in I(0|n).
\end{array}
\right.
\end{align*}
The statement now follows by comparing with the definition of
$F_a$.
\end{proof}

We can identify $E_a$ similarly. Note that the notations $E$ and
$F$ are switched in \cite{CR}. Following \cite[7.4]{CR},
Propositions~\ref{Heckemodule} and \ref{trans=casim} above imply
that $E_a,F_a,X,T$ satisfy the definition of the
$\mathfrak{sl}_2$-categorification (which we will skip here and
refer to \cite[5.1.1, 5.2.1]{CR} for detail).

\subsection{A formal consequence}

By definition, the (divided power) translation functors $E_a^{(i)},
F_a^{(i)}$ for $i \ge 1$, are obtained from the functors $E^i, F^i$
by replacing $V^{\otimes i}$ by the symmetric products $S^i V^*,
S^iV$ et cetera).

We shall need the following formal consequence of the
$\mathfrak{sl}_2$-categorification (see
\cite[Proposition~5.23]{CR} and a statement in its proof).

\begin{theorem}\label{cr:simplicity}
For every simple object $L$ in $\FPn$, $i \le d:=\max \{j| F^j(L)
\neq 0\}$, and $a\in\Z$, the socle and cosocle of $F^{(i)}L$ are
simple and isomorphic. Furthermore, $F^{(d)} L$ is simple.
\end{theorem}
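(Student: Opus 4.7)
The plan is to deduce Theorem~\ref{cr:simplicity} as a direct application of \cite[Proposition~5.23]{CR} once the hypotheses of an $\mathfrak{sl}_2$-categorification have been verified for $\FPn$. The statement is a general formal consequence of the categorification axioms, so there is essentially no new work specific to the super setting beyond checking applicability.

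The first step is to observe that Propositions~\ref{Heckemodule} and \ref{trans=casim}, together with the biadjointness of $E_a$ and $F_a$ (which follows from the fact that $V$ and $V^*$ are dual and $\FPn$ has enough projectives and injectives, as $P_n(\la)$ exists) provide all the ingredients of an $\mathfrak{sl}_2$-categorification in the sense of \cite[5.1.1, 5.2.1]{CR}: specifically, for each $a\in\Z$ we have an adjoint pair $(E_a,F_a)$ of exact endofunctors, the degenerate affine Hecke algebra $H_\ell$ acts on $F^{\ell}$ through $X,T$, and the decomposition of $F = V\otimes -$ into generalized eigenspaces of $X$ is precisely the decomposition into the $F_a$'s. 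The weight space decomposition $\FPn = \bigoplus_\g \mathcal O^+_\g$ indexed by $\epsilon$-weights plays the role of the $\mathfrak{sl}_2$-weight decomposition, with the arrows $\g \to_a \g'$ giving the categorification of the $\mathfrak{sl}_2$-action.

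Once these axioms are in place, I fix a simple object $L\in \FPn$ and a weight $a\in\Z$, restrict attention to the $\mathfrak{sl}_2$-string through the block of $L$, and apply \cite[Proposition~5.23]{CR} verbatim. That proposition yields both assertions at once: $F_a^{(d)}L$ is simple (the ``highest $F$-power'' of the simple $L$ in its $\mathfrak{sl}_2$-string), and for all $0\le i\le d$ the functor $F_a^{(i)}$ sends $L$ to an object with simple and isomorphic socle and cosocle. No object in $\FPn$ admits infinite $F_a$-strings because every object has finite length (as noted after the definition of $\FPn$), so $d$ is well-defined and finite.

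The only real obstacle is verifying the $\mathfrak{sl}_2$-categorification axioms of \cite{CR} hold in the super setting, in particular the biadjointness of $(E_a,F_a)$ and the integrability (the fact that, block by block, only finitely many $F_a^i$ and $E_a^i$ are nonzero). Biadjointness follows from $V^* \simeq V^{\vee}$ with suitable parity shift and the standard adjunction between $V\otimes -$ and $V^*\otimes -$; integrability on a given block follows from the finite length of objects combined with the central character constraints dictated by $\wt^\epsilon$. With these in place the conclusion is purely formal, and no further calculation is required.
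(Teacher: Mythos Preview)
Your proposal is correct and matches the paper's approach exactly: the paper states this theorem as a formal consequence of \cite[Proposition~5.23]{CR}, having already noted (just before the theorem) that Propositions~\ref{Heckemodule} and \ref{trans=casim} imply the $\mathfrak{sl}_2$-categorification axioms of \cite[5.1.1, 5.2.1]{CR} are satisfied. Your additional remarks on biadjointness and integrability simply make explicit what the paper leaves implicit in the phrase ``which we will skip here.''
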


\section{Some results on canonical basis and tilting modules}
\label{sec:partial}

In this section we establish some miscellaneous results on
canonical basis elements and tilting modules that will be used in
subsequent sections.

\subsection{The $\texttt L$ operators}

Let $n\in\N$ and $f \in \Z^{m|n}$ be $S_{\m|n}$-conjugate to an
element in $\Z^{\m|n}_+$. Recall that $m=m_1+\cdots+m_s$. Let $-m
\le i <0 <j \le n$ with $f(i) =f(j)$. We define the $\texttt L$
operators (cf.~\cite{Br})
\begin{eqnarray*}
\texttt L_{i,j} (f) := f -a(d_i -d_j),
\end{eqnarray*}
where $a$ is the smallest positive integer such that $f -a(d_i
-d_j)$ and all $\texttt L_{k,l}(f) - a(d_i -d_j)$ for $-m\le
i,k<0<l<j\le n$ with $f(k) =f(l)$ are $S_{\m|n}$-conjugate to
elements of $\Z_{+}^{\m|n}$.

Now let $f\in\Z^{\m|n}_+$ and suppose $\#f=k$. Let $-m\le
i_i,i_2,\ldots,i_k\le -1$ and $1\le j_k<j_{k-1}<\cdots<j_1\le n$
be such that $f(i_l)=f(j_l)$, for $l=1,\ldots,k$.  For a $k$-tuple
$\theta=(\theta_1,\ldots,\theta_{k})\in\N^{k}$ we define \cite{Br}
\begin{align*}
f^{\texttt{L}_\theta} \equiv {\texttt L}_\theta(f)
=\Big{(}{\texttt L}^{\theta_k}_{i_k, j_k} \circ\cdots
 \circ {\texttt L}^{\theta_1}_{i_1 ,j_1}(f)\Big{)}^+,
\end{align*}
where the superscript $+$ here stands for the unique
$S_{\m|n}$-conjugate in $\Z^{\m|n}_+$.
\subsection{The positive pairs}

In this subsection we set $\m=(m_1,m_2)$ with $m_1+m_2 =m$, and
shall adapt here the notion of positive pairs defined in
$\Z_+^{m_1,m_2|0}$ from \cite{CWZ}.

Let $f \in \Z_+^{m_1,m_2|n}$. For a pair of integers $(i|j)$ such
that $-m\le i < -m_2 \le j<0$, we define the {\em distance} of
$(i|j)$ (associated to $f\in\Z^{\m+n}_+$) to be
$d(i|j):=f(i)-f(j).$
We call $(i|j)$ an {\em admissible pair} for $f$ if $f(i)>f(j)$
and $f\cdot \tau_{ij}$ affords a (unique) conjugate $(f\cdot
\tau_{ij})^+ \in\Z^{\m|n}_+$.
Two admissible pairs $(i_1|j_1)$ and $(i_2 |j_2)$ for $f$ are said
to be {\em disjoint}, if $i_1\not=i_2$ and $j_1\not=j_2$. Two
subsets $A_1$ and $A_2$ of admissible pairs of $f$ are said to be
disjoint, if any two admissible pairs $(i_1|j_1)\in A_1$ and $(i_2
|j_2)\in A_2$ are disjoint.
Let $A^+_f$ denote the set of all admissible pairs of $f$.  For $k
\ge 1$ we define recursively
$\Sigma^k_f:=\{(i|j)\in A^+_f\vert d(i|j)=k\text{ and }
(i|j)\text{ disjoint from } \sqcup_{s=1}^{k-1}\Sigma^s_f\}. $
Let $\Sigma^+(f)\equiv \Sigma^+_f:=\sqcup_{k\ge 1} \Sigma^k_f$. An
element in $\Sigma^+_f$ is called a {\em positive pair} of $f$.
Given a subset $\Sigma$ of positive pairs of $f$, we denote by
$f_\Sigma$ the element in $\Z^{\m|n}_+$ obtained by first
interchanging the values of $f$ at each positive pair in $\Sigma$,
and then taking the unique $S_{\m|n}$-conjugate in $\Z^{\m|n}_+$.

Set $I(m|n) =I_1 \sqcup I_2 \sqcup I_3$, where $I_1, I_2, I_3$ are
the increasing subintervals of $I(m|n)$ of length $m_1,m_2,n$
respectively. We denote by $f_{ab}$ the restriction of $f$ to
$I_a\cup I_b$ with $a\leq b$ and let $f_a =f_{aa}$.

\subsection{On tilting modules in $\mathcal O^+_{m_1,m_2|n}$}
Given $\la\in X_{\m|n}^+$, by abuse of notation we denote
$U_n(f_\la)=U_n(\la)$, $K_n(f_\la)=K_n(\la)$ and
$L_n(f_\la)=L_n(\la)$, the respective tilting, parabolic Verma and
irreducible $\gl(m|n)$-modules.

Let $f\in\Z^{m_1,m_2|n}_+$ with $m=m_1+m_2$. We denote by
$K^{12}(f_{12})$ the parabolic Verma $\gl(m_1+m_2)$-module.
Likewise the notation $K^{23}(f_{23})$ denotes the parabolic Verma
$\gl(m_2|n)$-module.

\begin{proposition}\label{condition:can}
Let $f\in\Z^{m_1,m_2|n}_+$.
\begin{itemize}
\item[(i)] Let $\Sigma$ be a positive pair of $f$. Assume that
${\rm Ext}^1(K^{12}((f_{\Sigma})_{12}),K^{12}(f_{12}))\not=0$.
Suppose that for any $g\prec f$ with $(U_n(f)):K_n(g))\not=0$ we
have $g\not\succ f_{\Sigma}$. Then $(U_n(f):K_n(f_\Sigma))\ge 1$.

\item[(ii)] Let $-m_2\le i<0<j\le n$ be such that $f(i)=f(j)$.
Assume that ${\rm Ext}^1(K^{23}(f^{{\texttt
L}_{i,j}}_{23}),K^{23}(f_{23}))\not=0$. Suppose that for any
$g\prec f$ with $(U_n(f)):K_n(g))\not=0$ we have $g\not\succ
f^{{\texttt L}_{i,j}}_{23}$. Then $(U_n(f):K_n(f^{{\texttt
L}_{i,j}}_{23}))\ge 1$.
\end{itemize}
\end{proposition}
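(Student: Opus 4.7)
Both parts are proved by the same strategy; I describe part (i) and indicate the modification for (ii) at the end. The plan is to lift the assumed nonzero $\mathrm{Ext}^1$ to $\mathcal O^+_{\m|n}$ via parabolic induction, and then exploit the extremality hypothesis on the Verma-flag weights of $U_n(f)$ to force $K_n(f_\Sigma)$ to appear as a Verma subquotient. Let $\tilde{\mathfrak{p}}\supset\mathfrak{p}$ denote the parabolic of $\gl(m|n)$ with Levi $\tilde{\mathfrak{l}}=\gl(m_1+m_2)\oplus\gl(n)$. Since any positive pair $(i|j)$ satisfies $-m\le i<-m_2\le j<0$, the swap defining $f_\Sigma$ preserves the restriction $f_3$, and by transitivity of parabolic induction
\[
K_n(\la)\cong\mathrm{Ind}_{\tilde{\mathfrak{p}}}^{\gl(m|n)}\bigl(K^{12}(\la_{12})\boxtimes L^0(\la_3)\bigr),
\]
where $L^0(\la_3)$ is the irreducible $\gl(n)$-module of highest weight $\la_3$. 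A non-split extension $0\to K^{12}(f_{12})\to E\to K^{12}((f_\Sigma)_{12})\to 0$ realizing the given class, tensored with $L^0(f_3)$, extended trivially along the nilradical of $\tilde{\mathfrak{p}}$, and induced to $\gl(m|n)$ via the exact functor $\mathrm{Ind}_{\tilde{\mathfrak{p}}}^{\gl(m|n)}$, produces $0\to K_n(f)\to\tilde E\to K_n(f_\Sigma)\to 0$ in $\mathcal O^+_{\m|n}$; this induced sequence remains non-split, since a $\gl(m|n)$-splitting, restricted to the natural $\tilde{\mathfrak{p}}$-submodule $1\otimes(E\boxtimes L^0(f_3))\subset\tilde E$ afforded by the PBW decomposition $U(\gl(m|n))=U(\bar{\mathfrak{u}}^-)U(\tilde{\mathfrak{p}})$, would yield a splitting of the original extension.

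Now assume for contradiction that $(U_n(f):K_n(f_\Sigma))=0$. Form the pushout of $\iota:K_n(f)\hookrightarrow U_n(f)$ (the bottom of the tilting Verma flag) with $K_n(f)\hookrightarrow\tilde E$, producing a commutative diagram with exact rows whose lower row is $0\to U_n(f)\to\hat E\to K_n(f_\Sigma)\to 0$. Since $U_n(f)$ has a dual Verma flag by Proposition~\ref{flagExt}(3), Proposition~\ref{flagExt}(2) gives $\mathrm{Ext}^1(K_n(f_\Sigma),U_n(f))=0$, so $\hat E\cong U_n(f)\oplus K_n(f_\Sigma)$ and the pushout yields an injection $(q,p):\tilde E\hookrightarrow U_n(f)\oplus K_n(f_\Sigma)$. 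Because $f_\Sigma\prec f$, one has $\mathrm{Hom}(K_n(f),K_n(f_\Sigma))=0$: any nonzero map would force $L_n(f)$ to appear among the composition factors of $K_n(f_\Sigma)$, which are however $L_n(\mu)$ with $\mu\preccurlyeq f_\Sigma\prec f$. Hence $p|_{K_n(f)}=0$, and if the induced map $\bar{q}:K_n(f_\Sigma)\cong\tilde E/K_n(f)\to U_n(f)/K_n(f)$ vanished then $q$ would factor through $\iota(K_n(f))$, producing a retract of the top row and contradicting its non-splitness; so $\bar{q}\ne 0$.

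Set $N:=U_n(f)/K_n(f)$, which inherits a Verma flag whose subquotients are exactly the $K_n(g)$ with $g\ne f$ and $(U_n(f):K_n(g))\ne 0$; by hypothesis every such $g$ satisfies $g\not\succcurlyeq f_\Sigma$, so the composition factors $L_n(\mu)$ of each $K_n(g)$ (satisfying $\mu\preccurlyeq g$) cannot equal $L_n(f_\Sigma)$. Thus $L_n(f_\Sigma)$ is not a composition factor of $N$, contradicting the fact that the nonzero image of $\bar{q}$ has head $L_n(f_\Sigma)$. This forces $(U_n(f):K_n(f_\Sigma))\ge 1$. Part (ii) is proved identically, with $\tilde{\mathfrak{l}}$ replaced by $\gl(m_1)\oplus\gl(m_2|n)$, $K^{12}(\cdot)$ by $K^{23}(\cdot)$, and $f_\Sigma$ by $f^{\texttt L_{i,j}}$; the comparison $f^{\texttt L_{i,j}}\prec f$ needed to kill the analogue of $\mathrm{Hom}(K_n(f),K_n(f^{\texttt L_{i,j}}))$ follows from clause~(1) of the super Bruhat relation $\downarrow$ iterated $a$ times. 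The main technical obstacle throughout is the non-splittability claim at the end of Step~1; the cleanest justification is the PBW-based restriction argument indicated above, which uses that the unit of the $(\mathrm{Ind},\mathrm{Res})$-adjunction exhibits the original extension as the $\tilde{\mathfrak{p}}$-submodule $1\otimes(E\boxtimes L^0(f_3))$ of the induced one.
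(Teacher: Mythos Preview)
Your argument is correct and follows the paper's outline for Step~1: both lift the given $\mathrm{Ext}^1$ class to a non-split extension $0\to K_n(f)\to T''\to K_n(f_\Sigma)\to 0$ by parabolic induction from the intermediate Levi $\gl(m_1+m_2)\oplus\gl(n)$, and your PBW-based non-splittability check is essentially dual to the paper's (which instead takes invariants for the nilradical of $\tilde{\mathfrak p}$ to recover the original sequence).

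The difference is the final step. The paper simply writes that ``from the construction of tilting modules \cite{So2, Br2} our second hypothesis above assures that $(U_n(f):K_n(g))\ge 1$''; implicitly one refines the super Bruhat order so that every $g\succ f_\Sigma$ is processed before $f_\Sigma$ in Soergel's inductive build of $U_n(f)$, and since by hypothesis none of those $K_n(g)$ are added, the partial object at the $f_\Sigma$-stage is still $K_n(f)$, where the lifted $\mathrm{Ext}^1$ is nonzero. Your pushout-and-contradiction argument replaces this appeal with a direct use of the tilting $\mathrm{Ext}$-vanishing axiom plus a composition-factor count in $U_n(f)/K_n(f)$; this is more self-contained and avoids unpacking the Soergel construction. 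One small redundancy: the verification that $p|_{K_n(f)}=0$ via $\mathrm{Hom}(K_n(f),K_n(f_\Sigma))=0$ is not needed, since by the pushout square $p$ coincides with the original quotient map $\tilde E\to K_n(f_\Sigma)$, which already kills $K_n(f)$.
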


\begin{proof}
We will only show (i).  The proof of (ii) is similar.

Let $\Sigma=(i|j)$ be such that $-m \le i<-m_2\le j<0$ and
$f(i)>f(j)$. Let $g=f_\Sigma$. Let $L_0^{12}(f_{12})$ and
$L_0^{12}(g_{12})$ denote the irreducible $\mathfrak{p}_{m_1,m_2}$
modules of highest weights $f_{12}$ and $g_{12}$ respectively,
where $\mathfrak{p}_{m_1,m_2}$ is the parabolic subalgebra of
$\gl(m_1+m_2)$ with Levi subalgebra $\gl(m_1)\oplus\gl(m_2)$. By
assumption there exists a non-split extension $T$ of
$\gl(m_1+m_2)$-modules
\begin{equation*}\label{ext111}
0\longrightarrow{\rm
Ind}_{\mathfrak{p}_{m_1,m_2}}^{\gl(m_1+m_2)}L_0^{12}(f_{12})\longrightarrow
T\longrightarrow {\rm
Ind}_{\mathfrak{p}_{m_1,m_2}}^{\gl(m_1+m_2)}L_0^{12}(g_{12})\longrightarrow
0.
\end{equation*}
Tensoring the above sequence with the simple $\gl(n)$-module
$L^3(f_3)=L^3(g_3)$, we obtain a non-split extension of
$\gl(m_1+m_2)\oplus\gl(n)$-modules
\begin{equation}\label{ext123}
0\rightarrow{\rm
Ind}_{\mathfrak{p}_{m_1,m_2}}^{\gl(m_1+m_2)}L_0^{12}(f_{12})\otimes
L^3(f_3)\rightarrow T'\rightarrow {\rm
Ind}_{\mathfrak{p}_{m_1,m_2}}^{\gl(m_1+m_2)}L_0^{12}(g_{12})\otimes
L^3(g_3)\rightarrow 0.
\end{equation}

By applying an induction functor to (\ref{ext123}), we obtain a
short exact sequence of $\gl(m_1+m_2|n)$-modules
\begin{equation}\label{ext456}
0\rightarrow K_n(f)\rightarrow T''\rightarrow K_n(g)\rightarrow 0.
\end{equation}
Taking the invariants of (\ref{ext456}) with respect to the
niradical of the parabolic $\mathfrak{p}$ whose Levi is
$\gl(m)\oplus\gl(n)$ we recover (\ref{ext123}), and hence the
indecomposability of $T''$ follows from that of $T'$.

Finally from the construction of tilting modules \cite{So2, Br2}
our second hypothesis above assures that $(U_n(f):K_n(g))\ge 1$.
\end{proof}
\subsection{Tilting modules with short Verma flags}

We recall the following variant of \cite[Corollary 4.27]{Br},
which follows from the fact that $E^{(r)}_a, F^{(r)}_a$ are exact
functors and are both left and right adjoint to each other.

\begin{lemma}\label{sum:tilt}
Let $U$ be a tilting module in $\mathcal O_{\m|n}^+$.  Then
$X_a^{(r)}U$ is a direct sum of tilting modules, where $X=E,F$.
\end{lemma}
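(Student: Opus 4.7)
The plan is to verify that $X_a^{(r)} U$ satisfies the two conditions characterizing a (direct sum of) tilting module(s): it admits both a Verma flag and a dual Verma flag. Both conditions come out of the two stated properties of the translation functors—exactness (for the Verma flag) and biadjointness (for the dual Verma flag condition, via Proposition~\ref{flagExt}).

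For the Verma flag, I would start with a Verma flag $0 = U_0 \subset U_1 \subset \cdots \subset U_k = U$ of $U$ with $U_i/U_{i-1}\cong K_n(\la^i)$, apply the exact functor $X_a^{(r)}$ to obtain a filtration of $X_a^{(r)} U$ with subquotients $X_a^{(r)} K_n(\la^i)$, and then argue that each $X_a^{(r)} K_n(\la^i)$ itself has a Verma flag. Unwinding the definition in \eqref{eq:transl}, this reduces to the standard fact that if $F$ is a finite-dimensional $\mathfrak{g}$-module (here $S^r V$ or $S^r V^*$), then $F\otimes K_n(\la)$ admits a Verma flag: restricting $F$ to $\mathfrak p$ gives a filtration by one-dimensional weight spaces (after further composing with $\mathfrak l$-semisimplification on each quotient), and tensoring each step with $U(\mathfrak g)\otimes_{U(\mathfrak p)}L_n^0(\la)$ via the tensor identity yields parabolic Verma subquotients. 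Projecting onto a single block preserves this flag structure.

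For the dual Verma flag, I would use Proposition~\ref{flagExt}(2)(c) and verify that ${\rm Ext}^1(K_n(\mu), X_a^{(r)} U)=0$ for every $\mu \in X^+_{\m|n}$. Since $X_a^{(r)}$ is exact and biadjoint to $Y_a^{(r)}$ (with $\{X,Y\}=\{E,F\}$), it induces natural isomorphisms on all Ext groups:
\begin{equation*}
{\rm Ext}^1_{\m|n}(K_n(\mu), X_a^{(r)} U) \;\cong\; {\rm Ext}^1_{\m|n}(Y_a^{(r)} K_n(\mu), U).
\end{equation*}
By the previous paragraph applied to the parabolic Verma module $K_n(\mu)$, the module $Y_a^{(r)} K_n(\mu)$ admits a Verma flag; and $U$ being tilting has a dual Verma flag by Proposition~\ref{flagExt}(3). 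Then Proposition~\ref{flagExt}(1) forces the right-hand Ext to vanish, giving the desired vanishing on the left.

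Having established both a Verma flag and a dual Verma flag on $X_a^{(r)} U$, the standard Ringel-type argument (as in \cite{Br2, So2}) decomposes $X_a^{(r)} U$ into indecomposable summands, each of which is a tilting module $U_n(\la)$, uniquely determined by its highest weight in the Verma flag. The main subtlety to check carefully is biadjointness in the parabolic super setting and the naturality of the Ext isomorphism; but this follows formally from exactness of $X_a^{(r)}$ and the unit/counit adjunctions, without any new ingredient beyond what is invoked in \cite[Cor.~4.27]{Br}.
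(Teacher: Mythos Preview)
Your proposal is correct and follows essentially the same approach as the paper: the paper simply records that the lemma is a variant of \cite[Corollary~4.27]{Br} which ``follows from the fact that $E^{(r)}_a, F^{(r)}_a$ are exact functors and are both left and right adjoint to each other,'' and your argument is precisely the standard unpacking of that sentence---exactness yields the Verma flag on $X_a^{(r)}U$, biadjointness together with Proposition~\ref{flagExt} yields the dual Verma flag, and the Ringel-type decomposition finishes.
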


\begin{lemma}\label{tiltVerma}
Let $\la \in X_+^{\m|n}$ be atypical. Then the parabolic Verma
module $K_n(\la)$ is not a tilting module.
\end{lemma}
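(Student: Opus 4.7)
\medskip
\noindent\emph{Proof plan for Lemma~\ref{tiltVerma}.}
The plan is to argue by contradiction. Suppose $K_n(\la)$ were a tilting module. By the uniqueness of tilting modules (see Subsection~\ref{subsec:tilt}), this would force $U_n(\la)\cong K_n(\la)$, and Corollary~\ref{tiltingdual} would then give $K_n(\la)\cong K_n(\la)^\tau$.

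From this self-duality I would first deduce that $K_n(\la)$ must be irreducible. Since $K_n(\la)$ is a cyclic highest weight module with one-dimensional $\la$-weight space, it is indecomposable, its cosocle equals $L_n(\la)$, and $[K_n(\la):L_n(\la)]=1$. Because the twist $\tau$ exchanges socle and cosocle and fixes simple modules up to isomorphism, $K_n(\la)\cong K_n(\la)^\tau$ forces the socle of $K_n(\la)$ to also be $L_n(\la)$. The composition $L_n(\la)\hookrightarrow K_n(\la)\twoheadrightarrow L_n(\la)$ of socle inclusion and cosocle projection cannot be zero: otherwise $L_n(\la)$ would appear once in the radical and once in the cosocle, contradicting $[K_n(\la):L_n(\la)]=1$. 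It is therefore an automorphism of $L_n(\la)$ and splits the socle inclusion, and the indecomposability of $K_n(\la)$ then forces $K_n(\la)\cong L_n(\la)$.

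To conclude, I would contradict this irreducibility using the classical reducibility of atypical parabolic Verma modules. Atypicality of $\la$ supplies an odd positive root $\alpha=\delta_i-\delta_j$ ($i<0<j$) with $(\la+\rho|\alpha)=0$, and a Kac-type singular-vector construction produces a nonzero primitive vector in the full Verma module $M(\la)$ at weight $\la-\alpha$. When $\alpha$ is the simple odd root $\delta_{-1}-\delta_1$, this vector is just $e_{1,-1}v_\la$, as the super-bracket computation $\{e_{-1,1},e_{1,-1}\}=e_{-1,-1}+e_{1,1}$ combined with $e_{-1,1}v_\la=0$ yields $e_{-1,1}e_{1,-1}v_\la=(\la_{-1}+\la_1)v_\la=0$ under atypicality; the general case reduces to this one via a chain of odd reflections. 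The kernel of $M(\la)\twoheadrightarrow K_n(\la)$ is generated by the even Chevalley--Serre relations $f_\beta^{\la(H_\beta)+1}v_\la$ for simple roots $\beta$ of the Levi $\mathfrak{l}$, each of which is supported within a single Levi block of $I(m|n)$; since $\alpha$ bridges two distinct blocks, the primitive vector does not lie in this kernel and descends to a nonzero primitive vector in $K_n(\la)$ at a weight strictly below $\la$, producing a proper nonzero submodule and contradicting irreducibility.

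The main technical point of the argument is the final reducibility step: rigorously producing Kac's singular vector for a general (not necessarily simple) atypical root and checking it escapes the parabolic relations. For a simple odd root this is the direct calculation above; in the general case it is a standard but slightly involved reduction via odd reflections.
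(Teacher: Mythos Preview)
Your overall strategy coincides with the paper's: assume $K_n(\la)$ is tilting, invoke Corollary~\ref{tiltingdual} to get $K_n(\la)\cong K_n(\la)^\tau$, deduce that socle and cosocle are both $L_n(\la)$, use $[K_n(\la):L_n(\la)]=1$ to force irreducibility, and then contradict this with the reducibility of $K_n(\la)$ for atypical $\la$. Your socle/cosocle argument in the second paragraph is a clean expansion of the paper's one-line assertion ``cannot have isomorphic socle and cosocle.''

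The genuine difference is in how reducibility is established. The paper bypasses all singular-vector constructions: since the parabolic $\mathfrak p_{\m,n}$ is contained in the maximal parabolic $\mathfrak p_{m,n}$ with Levi $\gl(m)\oplus\gl(n)$, transitivity of induction shows that $K_n(\la)$ surjects onto the Kac module (the parabolic Verma for $\mathfrak p_{m,n}$), and the reducibility of Kac modules at atypical weights is a classical theorem of Kac. This is a two-line argument.

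Your route, by contrast, has a real gap in the general case. The computation for the simple odd root $\delta_{-1}-\delta_1$ is fine, but the phrase ``the general case reduces to this one via a chain of odd reflections'' hides substantial work: odd reflections change the Borel and hence the highest-weight vector, and the singular vector you obtain in the new Borel is a nontrivial PBW expression in the original root vectors. Your claim that it ``does not lie in the kernel'' of $M(\la)\twoheadrightarrow K_n(\la)$ because ``$\alpha$ bridges two distinct blocks'' is not justified once the singular vector is no longer a single root vector---it could a priori involve even negative root vectors internal to a Levi block. You would need either an explicit formula for the singular vector or a weight/PBW-filtration argument to rule this out. Replacing this step with the Kac-module quotient, as the paper does, eliminates the difficulty entirely.
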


\begin{proof}
For atypical $\la$, $K_n(\la)$ is reducible since the Kac module
(which is the parabolic Verma module with respect to the parabolic
subalgebra whose Levi subalgebra is $\gl(m)\oplus\gl(n)$) as its
quotient is reducible. If $K_n(\la)$ were a tilting module, then by
Coroallry \ref{tiltingdual} we have $K_n(\la)=K_n(\la)^\tau$. But
this is impossible since $K_n(\la)$ is reducible and hence cannot
have isomorphic socle and cococle.
\end{proof}

\begin{proposition}\label{3terms}
Let $h,f\in\Z^{\m|n}_+$ be atypical. Suppose that (i) $U_f=XU_h$
for a product $X$ of $E_a^{(r)}$ and $F^{(r)}_a$ with varying $a$
and $r$, (ii) $i[U_n(h)]=U_h(1)$, and (iii) $U_f$ has at most
three monomial terms. Then $X U_n(h) =U_n(f)$, and
$i[U_n(f)]=U_f(1)$.
\end{proposition}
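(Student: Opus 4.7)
The plan is to combine Lemma~\ref{sum:tilt} with the Fock-space bookkeeping of Theorem~\ref{superChe=tran}, and then eliminate extra tilting summands using the positivity of Theorem~\ref{th:BrConj}(1) together with Lemma~\ref{tiltVerma}. First, by Lemma~\ref{sum:tilt} I would write $XU_n(h) = \bigoplus_\alpha U_n(\la^\alpha)^{\oplus m_\alpha}$ as a direct sum of tilting modules. Since $i$ intertwines the translation functors with the $\mathcal U_{q=1}$-action (Theorem~\ref{superChe=tran}), hypotheses (i) and (ii) give
\[
  i[XU_n(h)] \;=\; X\cdot i[U_n(h)] \;=\; X\,U_h(1) \;=\; U_f(1).
\]
Each $i[U_n(\la^\alpha)]$ is an $\N$-linear combination of $K_\mu(1)$, $\mu\preccurlyeq\la^\alpha$, with leading coefficient~$1$ on $K_{\la^\alpha}$; by Theorem~\ref{th:BrConj}(1) the expansion of $U_f(1)$ has $\N$-coefficients and leading coefficient~$1$ on $K_f$. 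Matching maxima forces a unique summand with $\la^\alpha = f$ of multiplicity $1$, so $XU_n(h)\cong U_n(f)\oplus T$ with $T = \bigoplus_\beta U_n(\la^\beta)^{\oplus m_\beta}$ and each $\la^\beta\prec f$; it remains to show $T = 0$.

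Next I would exploit (iii). Every $\la^\beta$ occurring in $T$ lies in $\mathrm{supp}(U_f(1))\setminus\{f\}$, a set of at most two elements, say $\{g_1,g_2\}$. Since atypicality is preserved by $\preccurlyeq$ (Subsection~\ref{subsec:superFock}), each $\la^\beta$ is atypical; by Lemma~\ref{tiltVerma} the tilting module $U_n(\la^\beta)$ has Verma flag of length $\ge 2$, so $i[U_n(\la^\beta)]$ supports at least one further monomial $K_\mu$ with $\mu\prec\la^\beta$ lying in $\{g_1,g_2\}\setminus\{\la^\beta\}$. A short case check handles most configurations: whenever $U_f$ has $\le 2$ terms, or $g_1$ and $g_2$ are incomparable, or $\la^\beta$ is the smaller element of $\{g_1,g_2\}$, the support constraint collapses the Verma flag of $U_n(\la^\beta)$ to $K_n(\la^\beta)$, contradicting Lemma~\ref{tiltVerma}.

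The delicate remaining subcase is when $U_f$ has exactly three monomial terms with $g_2\prec g_1\prec f$, so a priori $T = U_n(g_1)^{\oplus m_1}$ is allowed for some $m_1\ge 0$. Writing $i[U_n(g_1)] = K_{g_1}(1) + r\,K_{g_2}(1)$ with $r\ge 1$, and $i[U_n(f)] = K_f(1) + a\,K_{g_1}(1) + b\,K_{g_2}(1)$ with $a,b\ge 0$, the Step~1 identity reduces to $a + m_1 = u_{g_1,f}(1)$ and $b + m_1 r = u_{g_2,f}(1)$. Ruling out $m_1 > 0$ is the main obstacle of the plan: I would approach it by combining the self-duality $U_n(\mu)^\tau \cong U_n(\mu)$ of Corollary~\ref{tiltingdual}, the reciprocity (\ref{eq:tiltduality}), and Proposition~\ref{flagExt}(1) with the biadjunction of the translation functors constituting $X$ to compute $\mathrm{End}(XU_n(h)) \cong \mathrm{Hom}(U_n(h),\, X^{\vee}X\,U_n(h))$, using hypothesis (ii) to pin down the right-hand side and comparing with the size of the local algebra $\mathrm{End}(U_n(f))$. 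Once $T = 0$ is established, the second conclusion $i[U_n(f)] = U_f(1)$ is immediate from the displayed identity.
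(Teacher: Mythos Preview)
Your opening moves are exactly those of the paper: write $XU_n(h)$ as a direct sum of tiltings via Lemma~\ref{sum:tilt}, push through $i$ using Theorem~\ref{superChe=tran} and hypotheses (i)--(ii) to get $i[XU_n(h)]=U_f(1)$, and conclude that $U_n(f)$ occurs as a summand. Where you diverge is in proving $T=0$, and here you work much harder than necessary and leave the key subcase unfinished.

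The paper disposes of $T$ in one line by a pure counting argument, which you have all the ingredients for but do not assemble. From (iii) and $i[XU_n(h)]=U_f(1)$, the total Verma flag length of $XU_n(h)$ is at most~$3$. Every weight $g$ with $K_n(g)$ occurring in this flag satisfies $g\preccurlyeq f$, hence $g$ is atypical; in particular $f$ itself is atypical, so by Lemma~\ref{tiltVerma} the summand $U_n(f)$ already has Verma flag length $\ge 2$. Any further tilting summand $U_n(\la^\beta)$ also has atypical highest weight and hence Verma flag length $\ge 2$. Two or more summands would force total length $\ge 4$, a contradiction. Thus $XU_n(h)$ is indecomposable and equals $U_n(f)$.

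Your ``delicate remaining subcase'' (three terms with $g_2\prec g_1\prec f$) is therefore a phantom: the inequality $2+2m_1\le 3$ kills $m_1$ immediately, and the proposed $\mathrm{End}$-computation via biadjunction and Corollary~\ref{tiltingdual} is neither needed nor, as written, complete. The support-chasing in your middle paragraph is correct but redundant once you use that $U_n(f)$ itself contributes at least two Verma factors.
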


\begin{proof}
By Theorem~\ref{superChe=tran} and the assumptions (i-ii), we have
that $i[XU_n(h)] =Xi[U_n(h)] =XU_h(1) =U_f(1)$. It follows from this
and Lemma~\ref{sum:tilt} that there is a summand of $XU_n(h)$
isomorphic to $U_n(f)$. So the proposition follows by showing that
$XU_n(h)$ is indecomposable. By the assumption (iii) and Theorem
\ref{superChe=tran}, $XU_n(h)$ has a Verma flag of length at most
three. The weight $g$ in any Verma $K_n(g)$ appearing in a Verma
flag of $XU_n(h)$ must be atypical like $f$. Thus $K_n(g)$ is not
tilting by Lemma~\ref{tiltVerma}, and hence $XU_n(h)$ has to be
indecomposable.
\end{proof}

\subsection{The typical case}

The next proposition is a variant of \cite[Lemma 2.25]{Br} and
\cite[Theorem 4.31]{Br}.  It can be proved by modifying the
arguments therein, using now Theorem \ref{red-KL-finite}.

\begin{proposition}\label{typical}
Let $f\in \Z^{\m+n}_+$ be typical and let $f_{\m}$ denote the
restriction of $f$ to the set $I(m|0)$. We have
\begin{itemize}
\item[(i)] $U_f=U_{f_{\m}}\otimes w_{f(1)}\wedge\cdots\wedge
w_{f(n)}$, where $U_{f_\m}$is the corresponding canonical basis
element in $\mathcal E^{\m+0}$.

\item[(ii)] The linear map $i$ sends $[U_n(f)]$ to $U_f(1)$.
\end{itemize}
\end{proposition}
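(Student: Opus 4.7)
My plan is to adapt the arguments of \cite[Lemma~2.25 and Theorem~4.31]{Br}, now in the parabolic setting and invoking Theorem~\ref{red-KL-finite} where Brundan used ordinary (non-parabolic) KL theory.

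For part (i), I would verify that
$$E_f \,:=\, U_{f_\m} \otimes (w_{f(1)} \wedge \cdots \wedge w_{f(n)})$$
satisfies the two defining properties of $U_f$ from the theorem in Subsection~\ref{aux:base}. Triangularity is straightforward: write $U_{f_\m} = K_{f_\m} + \sum_{g_\m \prec f_\m} u_{g_\m,f_\m}(q)\, K_{g_\m}$ in $\mathcal{E}^{\m+0}$ with coefficients in $q\mathbb{Z}[q]$; tensoring with the $w$-wedge gives $E_f = K_f + \sum u_{g_\m,f_\m}(q) K_g$ where $g|_{I_+} = f|_{I_+}$ and $g|_{I_-} = g_\m$. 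Each such $g$ satisfies $g \prec f$ in the super Bruhat order, since any chain witnessing $g_\m \prec_\m f_\m$ consists of moves of type (ii) in the super Bruhat definition (transpositions inside $I_-$).

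The harder step is bar-invariance, and this is where I expect the principal obstacle. By Proposition~\ref{thm:involutionsuper}, the bar-involution on $\mathcal{E}^{\m|n} \cong \mathcal{E}^{\m|0} \otimes \mathcal{E}^{0|n}$ is constructed via a quasi-$R$-matrix $\Theta = 1 \otimes 1 + \sum_{\mu > 0} \Theta_\mu$ with $\Theta_\mu \in \mathcal{U}^-_{-\mu} \otimes \mathcal{U}^+_\mu$, satisfying $\overline{u \otimes v} = \Theta\cdot(\bar u \otimes \bar v)$. Since no step of the super Bruhat relation $\downarrow$ applies when $m=0$, the super Bruhat ordering on $\Z^{0|n}_+$ is trivial, so every monomial $w_{f(1)} \wedge \cdots \wedge w_{f(n)}$ is its own canonical basis element and hence bar-invariant; combined with $\overline{U_{f_\m}} = U_{f_\m}$, bar-invariance of $E_f$ reduces to showing $\Theta_\mu E_f = 0$ for all $\mu > 0$. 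Each $\Theta_\mu$ is a sum of terms $F_\beta \otimes E_\beta$ with matched positive-root weights. A weight-and-value argument shows that $E_\beta$ acts nontrivially on $w_{f(1)} \wedge \cdots \wedge w_{f(n)}$ only by raising indices among the values in $f(I_+)$, while $F_\beta$ acts on $U_{f_\m}$ only through values in $f(I_-)$ (which are also the values of all lower monomials $K_{g_\m}$ appearing in $U_{f_\m}$, as these Bruhat moves permute values without changing the underlying set). By typicality, $f(I_-) \cap f(I_+) = \emptyset$, and one checks inductively that no iterated application can bridge this gap, forcing $\Theta_\mu E_f = 0$. This parallels the key computation in \cite[Lemma~2.25]{Br}.

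For part (ii), I would use Theorem~\ref{superChe=tran}, which identifies $G(\FnDelta)_{\mathbb Q}$ with $\mathcal{E}^{\m|n}|_{q=1}$ as $\mathcal{U}_{q=1}$-modules via $i([K_n(\la)]) = K_{f_\la}(1)$. For typical $\la \in X^+_{\m|n}$, central character constraints force every composition factor of $K_n(\la)$ to be typical as well, so the block of $\mathcal O^+_{\m|n}$ containing $L_n(\la)$ is naturally equivalent to a product of a parabolic block for $\mathfrak{gl}(m)$ and a semisimple block for $\mathfrak{gl}(n)$. Under this equivalence the tilting module $U_n(\la)$ corresponds to $U^0(\la_\m) \boxtimes L^0(\la_{>0})$, and by the parabolic KL result Theorem~\ref{red-KL-finite} we have $i([U^0(\la_\m)]) = \mathcal{U}_{f_\m}(1) = U_{f_\m}(1)$. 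Combined with part (i), this yields $i([U_n(f)]) = U_{f_\m}(1) \otimes w_{f(1)} \wedge \cdots \wedge w_{f(n)} = U_f(1)$, completing the proof.
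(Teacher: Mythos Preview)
Your proposal is correct and follows essentially the same approach that the paper indicates: the paper itself gives no detailed proof but simply points to adapting \cite[Lemma~2.25 and Theorem~4.31]{Br} with Theorem~\ref{red-KL-finite} replacing Brundan's appeal to ordinary KL theory, and this is exactly what you do. Your sketch is in fact considerably more detailed than what the paper supplies, and the key observations you isolate---triviality of the super Bruhat order on $\Z^{0|n}_+$, the disjointness $f(I_-)\cap f(I_+)=\emptyset$ from typicality forcing $\Theta_\mu E_f=0$, and the block for a typical weight having fixed $f|_{I_+}$ so that it reduces to a parabolic $\mathfrak{gl}(m)$-block tensored with a one-point $\mathfrak{gl}(n)$-block---are precisely the ingredients needed.
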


\subsection{The regular case}

We introduce a {\em Regularity} Condition (R) on $f \in
\Z_+^{m_1,m_2|n}$:
\begin{itemize}
\item[(R)] If $f(i)=f(j)=a$ for some $a\in\Z$ and $-m_1-m_2\le
i<0<j$, then there exists no $k\in I(m_1+m_2|n)\backslash\{i,j\}$
with $f(k)=a-1$ or $f(k)=a$.
\end{itemize}

\begin{theorem}\label{formula:canonical}
Suppose that $f\in\Z^{m_1,m_2| n}_+$ satisfies Condition (R). Then
we have
\begin{itemize}
\item[(i)] $U_f=\sum_{\theta\in\{0,1\}^{\# f}}
\sum_{\Sigma\subseteq\Sigma^+(f)} q^{|\theta| +|\Sigma|}
K_{f^{\texttt {L}_\theta}_\Sigma}$ in $\widehat{\mathcal
E}^{m_1,m_2|n}$,

\item[(ii)] $[U_n(f)]=\sum_{\theta\in\{0,1\}^{\#
f}}\sum_{\Sigma\subseteq\Sigma^+(f)} [K_n(f^{\texttt
{L}_\theta}_\Sigma)]$ in $G(\mathcal O^+_{m_1,m_2|n})$,

\item[(iii)] the tilting module $U_n(f)$ is $\tau$-self-dual and
it has a simple cosocle $L_n(\tilde{f})$, where $\tilde{f}
=f^{\texttt {L}_{(1,\ldots, 1)}}_{\Sigma^+(f)}$ is the minimal
weight in a Verma flag of $U_n(f)$.
\end{itemize}
In particular, a Verma flag of $U_n(f)$ is multiplicity-free and
has length $2^{|\Sigma^+(f)|+\#f}$.
\end{theorem}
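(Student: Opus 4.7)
The plan is to verify (i) by checking the canonical basis characterization directly, deduce (ii) by an inductive tilting construction that matches the canonical basis formula, and extract (iii) from $\tau$-self-duality. For (i), let $V_f$ denote the right-hand side of the asserted formula and appeal to the uniqueness of the canonical basis in $\widehat{\mathcal E}^{m_1,m_2|n}$. The leading-term condition $V_f\in K_f+\sum_{g\prec f}q\Z[q]K_g$ is immediate from Condition~(R): it forces the minimal shift $a=1$ in every $\texttt L$-operator applied at an atypical pair of $f$, and it ensures that the $\texttt L_{i_k,j_k}$ and the positive-pair swaps produce pairwise distinct weights, each strictly below $f$ in the super Bruhat order, with coefficient $q^{|\theta|+|\Sigma|}\in q\Z[q]$ whenever $(\theta,\Sigma)\ne(0,\emptyset)$. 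The substantive content is the bar-invariance of $V_f$. Condition~(R) is tailored precisely so that the atypical $\texttt L$-operations and positive-pair swaps decouple combinatorially: applying $\texttt L_{i_k,j_k}$ does not alter the remaining positive pairs or their distances, and swapping at a positive pair does not destroy the atypical pairs. I would verify $\overline{V_f}=V_f$ by induction on $|\Sigma^+(f)|+\#f$, the base case being $f$ with no atypical entries and no positive pairs, so that $f$ is $S_{\m|n}$-minimal in its orbit and $\overline{K_f}=K_f$ by Proposition~\ref{thm:involutionsuper}. The inductive step reduces, via the decoupling, to the standard two-term bar-invariance lemma: $K_g+qK_{g'}$ is bar-invariant whenever $g'$ is obtained from $g$ by a single elementary move whose quasi-$R$-matrix correction contributes exactly one power of $q$.

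For (ii), once (i) is in hand, setting $q=1$ and applying Theorem~\ref{superChe=tran} produces $\sum_{\theta,\Sigma}[K_n(f^{\texttt L_\theta}_\Sigma)]\in G(\FnDelta)_{\mathbb Q}$ as a candidate for $[U_n(f)]$. To identify it as the tilting class, I would build $U_n(f)$ from the minimal typical tilting module $U_n(\tilde f)$, whose structure is given by Proposition~\ref{typical}, by iteratively applying translation functors (one for each atypical pair and one for each positive pair) that, on the Fock-space side, correspond to the operators doubling the number of monomials. Lemma~\ref{sum:tilt} ensures these translations yield direct sums of tilting modules; Proposition~\ref{condition:can}, whose $\mathrm{Ext}^1$-hypotheses reduce to classical nonvanishing in the regular setting forced by Condition~(R), provides the lower bound $(U_n(f):K_n(f^{\texttt L_\theta}_\Sigma))\ge 1$ for every $(\theta,\Sigma)$; and Condition~(R) together with this bound will force the resulting tilting direct sum at each step to be indecomposable, ultimately pinning down $U_n(f)$ and giving the multiplicity-free Verma flag of length $2^{|\Sigma^+(f)|+\#f}$.

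For (iii), $\tau$-self-duality of $U_n(f)$ is Corollary~\ref{tiltingdual}. Inspection of the formula in (ii) identifies $\tilde f=f^{\texttt L_{(1,\ldots,1)}}_{\Sigma^+(f)}$ as the unique super-Bruhat-minimal weight appearing in any Verma flag of $U_n(f)$, and it appears with multiplicity one. A standard dominance argument then identifies $L_n(\tilde f)$ with the socle of $U_n(f)$, and $\tau$-self-duality transfers this to the cosocle.

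The main obstacle will be the bar-invariance verification in (i): although Condition~(R) rules out cross-interactions between $\texttt L$-moves and positive-pair swaps, assembling the elementary two-term bar-invariance computations into a single coherent induction that covers all $2^{|\Sigma^+(f)|+\#f}$ terms of $V_f$ requires tight control of the quasi-$R$-matrix across the entire sum simultaneously. A secondary difficulty is the upper bound in (ii), which hinges on indecomposability persisting through the inductive translation-functor construction; verifying this persistence under Condition~(R) is a nontrivial but essentially routine extension of Proposition~\ref{3terms} beyond the three-monomial constraint.
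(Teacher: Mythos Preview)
Your plan for (i) is workable in principle but needlessly hard, and the paper's route is worth knowing: rather than verifying bar-invariance of $V_f$ directly via the quasi-$R$-matrix, the paper runs an induction on $\#f$ (and then on $|\Sigma^+(f)|$ when $\#f=0$). At the inductive step one defines $h$ with $\#h=\#f-1$ by lowering a single atypical entry, sets $X=E_{a_k-1}$, and checks by a short direct computation that $XU_h$ equals the claimed $V_f$; bar-invariance of $XU_h$ is then \emph{automatic} from bar-invariance of $U_h$ and the compatibility $\overline{Xu}=\overline{X}\,\overline{u}$. This sidesteps entirely the ``tight control of the quasi-$R$-matrix across the entire sum'' that you correctly identify as the main obstacle in your approach.

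The genuine gap is in (ii)--(iii), specifically the simplicity of the cosocle (equivalently socle). Your ``standard dominance argument'' does not work: knowing that $\tilde f$ is the unique minimal Verma-flag weight with multiplicity one tells you only that $L_n(\tilde f)$ occurs in the socle (via the dual Verma flag and $\tau$-self-duality), not that the socle is simple. Nothing prevents another $L_n(\mu)$ from also embedding into $U_n(f)$; the inequality $\dim\mathrm{Hom}(L_n(\mu),U_n(f))\le\dim\mathrm{Hom}(K_n(\mu),U_n(f))=(U_n(f):K_n(\mu))\le 1$ bounds multiplicities but does not force vanishing for $\mu\ne\tilde f$. Likewise, the indecomposability step in your (ii) is not a ``routine extension of Proposition~\ref{3terms}'': that proposition works only because with at most three atypical Verma subquotients, Lemma~\ref{tiltVerma} rules out any splitting; with $2^{|\Sigma^+(f)|+\#f}$ terms there are many candidate direct-sum decompositions compatible with Lemma~\ref{tiltVerma}, and Proposition~\ref{condition:can} supplies only lower bounds.

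The paper's missing ingredient is the Chuang--Rouquier $\mathfrak{sl}_2$-categorification (Theorem~\ref{cr:simplicity}). Condition~(R) guarantees $F_{a_k-1}^2 K_g=0$ for every $g\preccurlyeq f$, hence $F_{a_k-1}^2 L_n(g)=0$, so by Theorem~\ref{cr:simplicity} each $F_{a_k-1}L_n(g)$ is simple or zero. An adjunction computation then shows $\mathrm{Hom}(XU_n(h),L_n(g))\cong\mathrm{Hom}(U_n(h),F_{a_k-1}L_n(g))$ is nonzero for exactly one $g$ (determined by the crystal operator $\tilde E_{a_k-1}$ applied to the inductively known simple cosocle of $U_n(h)$). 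This forces $XU_n(h)$ to have simple cosocle, hence to be indecomposable, hence to equal $U_n(f)$. Your proposal does not invoke this machinery, and without it the argument does not close.
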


\begin{proof}
Let $\#f=k$ and $\{(i_1|j_1),\cdots,(i_k|j_k)\}$ be the set of all
pairs of $f$ with $f(i_t)=f(j_t)$ for $1\le t\le k$, where
$0<j_1<\cdots<j_k$. Since $f$ satisfies (R), we have
$\Sigma^+(f^{\texttt{L}_\theta}) =\Sigma^+(f)$ for every
$\theta\in\{0,1\}^{\#f}$, and moreover,
$\texttt{L}_{i_s,j_s}\circ\texttt{L}_{i_t,j_t}=\texttt{L}_{i_t,j_t}\circ\texttt{L}_{i_s,j_s}$.

Take $({i}_k|{j}_k)$ with $f({i}_k)=f({j}_k)={a}_k$. Assume
without loss of generality that $f$ is of the form
$$(\cdots {a}_k\cdots|\cdots|\cdots {a}_k\cdots).$$
(We omit the parallel proof when $f$ is of the form
$(\cdots|\cdots {a}_k\cdots|\cdots {a}_k \cdots)$.)

We prove (i) by induction on the atypicality number $\#f$. By
Proposition~\ref{typical}, the case $\#f=0$ boils down to
\cite[Theorem~4.25]{CWZ}.

Let $h$ be defined by $h({j}_k)={a}_k-1$ and $h(s)=f(s)$, for all
$s\not={j}_k$. Note that $\#h=\#f-1$ and
$|\Sigma^+(h)|=|\Sigma^+(f)|$, and the induction assumption gives
an explicit formula for $U_h$ in $2^{|\Sigma^+(f)|+\#f-1}$
monomial terms. Set $X=E_{{a}_k-1}$. Then $X U_h$ is clearly
bar-invariant and by a direct calculation is equal to the
right-hand side in (i), hence it has to coincide with $U_f$ by
definition of canonical basis. This proves (i).

We prove (ii) and (iii) together in two inductive steps: (1)
induction on the atypicality number $\#f$ to reduce to the case
when $\#f=0$; (2) in the case when $\#f=0$, induction on the
cardinality $|\Sigma^+(f)|$. In the initial case when
$\#f=|\Sigma^+(f)|=0$, $f$ is minimal in super Bruhat ordering and
$K_n(f)$ is irreducible, and hence $U_n(f) =K_n(f)$ has a simple
cosocle. The arguments (which are based on Method One of the proof
of \cite[Theorem 4.37]{Br}) for these two steps are completely
analogous, and we will only present the inductive step (1) on
$\#f$ in detail below.

By (R), for each $g\preceq f$ we clearly have $F_{a_k-1}^2K_g=0$,
hence $F^{2}_{a_k-1}K_n(g)=0$ and then $F^{2}_{a_k-1}L_n(g)=0$. It
follows by Theorem~\ref{cr:simplicity} that $F_{a_k-1}L_n(g)$ is
irreducible or zero depending on whether or not $F_{a_k-1}K_g$ is
zero (or equivalently, depending on whether $(a_k-1)$-string of the
underlying crystal graph has length $1$ or $0$). Suppose that
\begin{eqnarray} \label{adjoint}
{\rm Hom}_{{{\bf m}|n}}(X U_n(h),L_n(g))\cong {\rm Hom}_{{{\bf
m}|n}}(U_n(h),F_{a_k-1}L_n(g))
\end{eqnarray}
is nonzero for some $g\preceq f$. By the inductive assumption, the
tilting module $U_n(h)$ has a simple cosocle $L_n(\tilde{h})$.
Thus, $F_{a_k-1}L_n(g)=L_n(\tilde{h})$ by
Theorem~\ref{cr:simplicity}. Hence $\tilde{F}_{a_k-1} g
=\tilde{h}, $ and thus $g= \tilde{E}_{a_k-1} \tilde{h}$ since the
$(a_k-1)$-string of the underlying crystal graph is of length $1$,
where $\tilde{E}_{a_k-1}, \tilde{F}_{a_k-1}$ denote the Kashiwara
(crystal) operators corresponding to $E_{a_k-1}, F_{a_k-1}$.  One
checks that $\tilde{E}_{a_k-1} \tilde{h} =\tilde{f}$. Hence $X
U_n(h)$ has a simple cosocle $L_n(\tilde{f})$ and in particular is
indecomposable. This proves (iii).

Now by the induction assumption and Theorem~\ref{superChe=tran},
we have
$$i[X U_n(h)] =X i[U_n(h)] =X U_h(1) =U_f(1).
$$
It follows by Lemma~\ref{sum:tilt} and the indecomposability of $X
U_n(h)$ that $U_n(f) =X U_n(h)$. Together with (i), this proves
(ii).
\end{proof}

\begin{remark}
Setting $n=0$, the proof of Theorem~\ref{formula:canonical} gives
a purely {\em algebraic} proof of the Kazhdan-Lusztig conjecture
for the parabolic category $\mathcal O_+^{m_1,m_2}$ of
$\gl(m_1+m_2)$-modules (compare with Theorem~\ref{th:multiCan}).
\end{remark}

\begin{remark}
Recall from Remark~\ref{rem:general} that $\Z^{m_1,m_2|n_1,n_2}_+$
parameterizes the bases for the space $\mathcal
E^{m_1,m_2|n_1,n_2}$. Suppose that $f$ satisfies the following
condition:
\begin{itemize}
\item[(RR)] If $f(i)=f(j)=a$ for some $a\in\Z$ with $i<0<j$, then
there exists no $k\in I(m_1+m_2| n_1+n_2)\backslash\{i,j\}$ with
$f(k)=a-1$ or $f(k)=a$.
\end{itemize}
Denote by $\Sigma^+(-f_{34})$ the set of positive pairs  of
$-f_{34}$. Since $f$ satisfies Condition (RR), we have $\Sigma^+
(f_{12}) =\Sigma^+ (f^{\texttt{L}_\theta}_{12})$ and
$\Sigma^+(-f_{34})=\Sigma^+ (-f^{\texttt{L}_\theta}_{34}))$, for
any $\theta\in\{0,1\}^{\#f}$. The argument for Theorem
\ref{formula:canonical} can be modified easily to establish the
following formula for the canonical basis:
\begin{equation*}
U_f=\sum_{\theta\in\{0,1\}^{\# f}}
\sum_{\Sigma\subseteq\Sigma^+(f^{\texttt L_\theta}_{12})} \;\;
\sum_{\Gamma\subseteq\Sigma^+ (-f^{\texttt{L}_\theta}_{34})}
q^{|\theta|+|\Sigma|+|\Gamma|} K_{f^{\texttt
L_\theta}_{\Sigma,\Gamma}}.
\end{equation*}
Here ${f^{\texttt L_\theta}_{\Sigma,\Gamma}}$ denotes the function
obtained from $f^{\texttt L_\theta}$ by first interchanging the
values of $f$ at each positive pair in $\Sigma$ and $\Gamma$, and
then taking the unique conjugate under $S_{m_1}\times
S_{m_2}\times S_{n_1}\times S_{n_2}$ in $\Z^{m_1,m_2|n_1,n_2}_+$.

The corresponding multiplicity-free formula holds for the tilting
module in the category $\mathcal O_+^{m_1,m_2|n_1,n_2}$ (see
Remark~\ref{rem:genO}).
\end{remark}

\begin{remark}
For $f$ satisfying the condition (R) or (RR), the formulae for
$U_f$ and $U_n(f)$ above support Conjecture~\ref{conj:BrKL}.
\end{remark}

\section{The category $\mathcal O_{1,1|n}^+$ of
$\mathfrak{gl}(2|n)$-modules} \label{canonical:11n}

In this section, we analyze completely the case for $\m =(1,1)$.
We find explicit formulas for canonical basis in $\mathcal
E^{1,1|n}$, and establish the parabolic Brundan
Conjecture~\ref{conj:BrKL} for the category $\mathcal
O^+_{1,1|n}$.

\subsection{A procedure for canonical basis}

For $f\in\Z^{1,1|n}_+$, we denote $\Sigma_{f_{13}}=(-2|j)$ if
there exists $j>0$ with $f(-2)=f(j)$, and otherwise set
$\Sigma_{f_{13}}=\emptyset$. Similarly, denote
$\Sigma_{f_{23}}=(-1|j)$ if there exists $j>0$ with $f(-1)=f(j)$,
and otherwise set $\Sigma_{f_{23}}=\emptyset$. If
$\Sigma_{f_{13}}\cup\Sigma_{f_{23}}=\emptyset$, then $\#f=0$.
Below we give a procedure to reduce any $f$ such that
$\Sigma_{f_{13}}\cup\Sigma_{f_{23}} \not =\emptyset$ to $g$ such
that $\Sigma_{g_{13}}\cup\Sigma_{g_{23}} =\emptyset$.

\begin{procedure}\label{procedure:11n}
Let $f\in\Z_+^{1+1|n}$ be such that
$\Sigma_{f_{13}}\cup\Sigma_{f_{23}}\not=\emptyset$.
\begin{itemize}
\item[Step 1] If $\Sigma_{f_{13}}=\emptyset$ go to Step 4.
Otherwise go to Step 2.

\item[Step 2] If $f(-2)\not=f(-1)$, go to Step 3. Otherwise let
$h$ be the function obtained from $f$ by setting
$h(-1)=h(-2)=f(-2)-1$ and $h(i)=f(i)$, for $i>0$. Let
$X=F^{(2)}_{f(-2)-1}$. Stop.

\item[Step 3] Let $h$ be the function obtained from $f$ by setting
$h(-2)=f(-2)-1$ and $h(i)=f(i)$, for $i\not=-2$. Let
$X=F_{f(-2)-1}$. Stop.

\item[Step 4] If $f(-2)=f(-1)-1$ go to Step 5.  Otherwise we let
$h$ be the function defined by $h(-1)=f(-1)-1$ and $h(s)=f(s)$,
for $s\not=-1$. Let $X=F_{f(-1)-1}$. Stop.

\item[Step 5] If there exists $i>0$ with $f(i)=f(-2)-1$, go to
Step 6.  Otherwise go to Step 3.

\item[Step 6] Let $j>0$ with $f(-1)=f(j)$. Let $k>1$ be the
smallest integer such that $f(j)-k\not=f(j-k+1)$.  Let $h$ be
defined by $h(j-k)=f(j-k)-1$ and $h(s)=f(s)$, for $s\not=j-k$. Let
$X=F_{f(j-k)-1}$. Stop.
\end{itemize}
\end{procedure}
As can be seen case by case below, repeated application of the
above procedure will produce in finite steps an element $g$ such
that $\#g=0$.

\begin{theorem}\label{canonical11n}
Let $f$ be such that $\Sigma_{f_{13}}\cup\Sigma_{f_{23}} \not
=\emptyset$. Let $X$ and $h$ be as defined in
Procedure~\ref{procedure:11n}. Then we have
\begin{enumerate}
 \item[(i)]
$U_f=XU_h$ in $\mathcal E^{1,1|n}$,
 \item[(ii)]
 $XU_n(h)=U_n(f)$ in $\mathcal O^+_{1,1|n}$,
\item[(iii)] $i[U_n(f)]=U_f(1)$, \item[(iv)] the tilting module
$U_n(f)$ is $\tau$-self-dual and it has a simple cosocle.
\end{enumerate}
\end{theorem}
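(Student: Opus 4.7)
The plan is to establish Theorem~\ref{canonical11n} by induction on the atypicality number $\#f$, which for $\m=(1,1)$ satisfies $\#f\in\{1,2\}$. The base of the induction is the typical case $\#f=0$, which lies outside the statement but anchors the recursion: Proposition~\ref{typical} supplies both the monomial formula for $U_f$ in $\mathcal{E}^{1,1|n}$ and the identification $i[U_n(f)]=U_f(1)$, while Corollary~\ref{tiltingdual} gives $\tau$-self-duality; the simple cosocle in the typical case follows from the fact that typical blocks of $\mathcal{O}^+_{1,1|n}$ reduce to classical parabolic blocks of type $A$ via Theorem~\ref{red-KL-finite}. For the inductive step one checks that each run through Steps~1--6 of Procedure~\ref{procedure:11n} produces a function $h$ whose pair (atypicality, secondary invariant) is strictly smaller than that of $f$ in a suitable lexicographic ordering, where the secondary invariants include the gap $f(-2)-f(-1)$ for Step~4 and the distance between relevant $w$-values in Step~6; finite iteration then lands in $\#h=0$.

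Part~(i) will be verified case by case along the procedure. Since $X$ is a Chevalley generator or its divided power, it commutes with the bar involution, so $XU_h$ is bar-invariant by the inductive hypothesis. A direct computation of the comultiplication action of $X$ on the explicit monomial expansion of $U_h$ should yield $XU_h\in K_f+\sum_{g\prec f}q\Z[q]\,K_g$, and uniqueness of the canonical basis will force $XU_h=U_f$. The most delicate verifications occur in Step~2 (where a divided power $F^{(2)}_{f(-2)-1}$ acts on two superposed $v$-factors) and in Step~6, where the action of $F_a$ must propagate correctly along the string of $w$-factors between positions $j-k$ and $j$.

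For parts~(ii) and~(iv) the plan is to exploit the $\mathfrak{sl}_2$-categorification of Section~\ref{sec:categorification}. Lemma~\ref{sum:tilt} guarantees that $XU_n(h)$ is a direct sum of tilting modules, and Theorem~\ref{superChe=tran} together with the inductive identity $i[U_n(h)]=U_h(1)$ and part~(i) gives
\[
i[XU_n(h)] \;=\; X\cdot i[U_n(h)] \;=\; XU_h(1) \;=\; U_f(1),
\]
so $U_n(f)$ appears as a summand of $XU_n(h)$. To eliminate other summands I would show $XU_n(h)$ has a simple cosocle, adapting the strategy of the proof of Theorem~\ref{formula:canonical}: using the adjointness
\[
\text{Hom}_{\m|n}(XU_n(h), L_n(g)) \;\cong\; \text{Hom}_{\m|n}(U_n(h), X'L_n(g)),
\]
with $X'$ the adjoint divided-power translation functor, the inductive simple cosocle $L_n(\tilde h)$ of $U_n(h)$, and Theorem~\ref{cr:simplicity} controlling the head and socle of $X'L_n(g)$, one pins down a unique $g=\tilde f$ (determined by the Kashiwara crystal associated to $X$) for which the Hom space is nonzero and one-dimensional. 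This forces $XU_n(h)$ to be indecomposable with simple cosocle $L_n(\tilde f)$, hence $XU_n(h)=U_n(f)$, settling (ii) and the cosocle half of (iv); $\tau$-self-duality in (iv) is Corollary~\ref{tiltingdual}, and (iii) is then immediate from (i), (ii), and Theorem~\ref{superChe=tran}.

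The main obstacle will be the explicit bookkeeping in part~(i) for Steps~2 and~6, where either a divided power $F^{(2)}_a$ acts or a cascade of $F_a$'s propagates through several $w$-factors; one must verify that no spurious higher-$q$ corrections appear. A secondary technical issue arises in the simple-cosocle argument for Step~2: there the relevant $F$-string has length~$2$, so the full strength of Theorem~\ref{cr:simplicity} (that $F^{(d)}L$ is simple for $d$ maximal) is required, rather than only the length-$1$ case that sufficed in the regular setting of Theorem~\ref{formula:canonical}.
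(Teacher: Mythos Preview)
Your proposal is essentially the paper's own strategy: a case-by-case recursion along Procedure~\ref{procedure:11n}, with part~(i) verified by direct computation (bar-invariance plus the $q\Z[q]$ condition) and parts~(ii)--(iv) handled by the adjointness/simple-cosocle argument lifted from the proof of Theorem~\ref{formula:canonical}, using Theorem~\ref{cr:simplicity}. That is exactly what the paper does in the bulk of the cases in Subsections~7.2 and~7.3.

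There is one genuine divergence you should be aware of. In case~(iii) of Subsection~7.3, namely $f=(a-1\,|\,a\,|\cdots a\cdots)$ (which is reached through Steps~4--6 and the Step~3 branch of the Procedure), the paper does \emph{not} run the simple-cosocle argument. Instead it computes $U_f$ explicitly via an auxiliary chain of $E$-operators starting from a typical weight, observes that $U_f$ has exactly three monomials, and invokes Proposition~\ref{3terms} to conclude that the translated module is indecomposable (hence equal to $U_n(f)$). The reason the paper takes this detour is that for such $f$ there exist $g'\preceq f$, for instance $g'=(a-1\,|\,a-1\,|\cdots a-1\cdots)$, on which the adjoint operator has a string of length~$2$: one has $E_{a-2}^2K_{g'}\neq 0$, so $E_{a-2}L_n(g')$ need not be simple. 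Your plan to rely uniformly on Theorem~\ref{cr:simplicity} would then have to argue that the \emph{socles} of the various $E_{a-2}L_n(g')$ are pairwise distinct (injectivity of the crystal map on the relevant set), which is more delicate than the length-$1$ verification you carried out in the regular case. This is not impossible, but it is precisely the bookkeeping the paper sidesteps via Proposition~\ref{3terms}; you should either carry out that crystal-injectivity check explicitly or adopt the three-term shortcut for this case.
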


The proof of  Theorem \ref{canonical11n} will be postponed to the
following subsections. We note the following immediate
consequence.
\begin{theorem}
The Conjecture~\ref{conj:BrKL} for the category $\mathcal
O^+_{1,1|n}$ holds.
\end{theorem}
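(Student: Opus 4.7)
The statement follows almost immediately from the results already assembled, so the plan is to observe that Theorem~\ref{canonical11n} and Proposition~\ref{typical} together cover every possible $f\in\Z_+^{1,1|n}$. Recall that Conjecture~\ref{conj:BrKL} for $\mathcal O_{1,1|n}^+$ asserts precisely that the map $i:G(\mathcal O_{1,1|n}^{+,\Delta})_{\mathbb Q}\rightarrow\mathcal E^{1,1|n}|_{q=1}$ sends $[U_n(\la)]$ to $U_{f_\la}(1)$ for all $\la\in X_{1,1|n}^+$. Via the bijection $X_{1,1|n}^+\leftrightarrow\Z_+^{1,1|n}$, it is equivalent to prove $i[U_n(f)]=U_f(1)$ for every $f\in\Z_+^{1,1|n}$.

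First I would split into two cases according to the degree of atypicality. If $\#f=0$, then $f$ is typical, and Proposition~\ref{typical}~(ii) immediately yields $i[U_n(f)]=U_f(1)$. If $\#f\ge 1$, then by construction of the pairs $\Sigma_{f_{13}}$ and $\Sigma_{f_{23}}$ at the start of the section, we have $\Sigma_{f_{13}}\cup\Sigma_{f_{23}}\neq\emptyset$, so Theorem~\ref{canonical11n}~(iii) gives exactly the required equality $i[U_n(f)]=U_f(1)$.

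Since $\m=(1,1)$ implies there are at most two potentially atypical positions among $i=-2,-1$, the dichotomy above is exhaustive, and no further combinatorial checks are needed. Thus the conclusion follows by combining the typical case from Proposition~\ref{typical} with Theorem~\ref{canonical11n}.

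In summary, the only genuine work has already been done in establishing Theorem~\ref{canonical11n}; the present statement is a direct corollary obtained by case analysis on $\#f$. There is no substantial obstacle here, as the typical case is handled by \cite{Br}-type arguments and the atypical case is handled by the explicit procedure and the $\mathfrak{sl}_2$-categorification machinery applied in the previous section.
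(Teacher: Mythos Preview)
Your proposal is correct and follows essentially the same approach as the paper: split into typical and atypical cases, invoking Proposition~\ref{typical} for the former and Theorem~\ref{canonical11n}~(iii) for the latter. The paper's proof is terser but identical in substance.
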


\begin{proof}
The case of typical weights is taken care by
Proposition~\ref{typical}. The case of atypical weights follows
from Theorem \ref{canonical11n}.
\end{proof}

Below as usual we will denote by $\cdots$ an expression with no
$a$ or $a-1$. The proof for Theorem~\ref{canonical11n} is done
case by case, and the main argument in most cases is the same as
the one for Theorem~\ref{formula:canonical}. In particular, a main
point of the argument is to check if the assumption in
Theorem~\ref{cr:simplicity} is also satisfied.

\subsection{Proof of Theorem~\ref{canonical11n}, I}

In the subsection, we consider the case when
$|\Sigma_{f_{13}}|=1$. Here we have the following possibilities:

\begin{itemize}
\item[(i)] $f=(a|a|\cdots a\cdots)$ \item[(ii)] $f=(a|a|\cdots
a-1,a\cdots)$ \item[(iii)] $f=(a|a-1|\cdots a-1,a\cdots)$
\item[(iv)] $f=(a|\cdots|\cdots a\cdots)$

\item[(v)] $f=(a|a-1|\cdots a\cdots)$

\item[(vi)] $f=(a|\cdots|\cdots a-1,a\cdots)$
\end{itemize}

In (i) we set $h=(a-1|a-1|\cdots a\cdots)$ and $X=F^{(2)}_{a-1}$.
We note that $h$ is a typical weight and hence we have
$U_{(a-1|a-1|\cdots a\cdots)}=K_{(a-1|a-1|\cdots a\cdots)}$. Thus
\begin{equation*}
U_{(a|a|\cdots a\cdots)}=XU_{(a-1|a-1|\cdots
a\cdots)}=K_{(a|a|\cdots a\cdots)}+qK_{(a|a-1|\cdots
a-1\cdots)}+q^2K_{(a|a|\cdots a-1\cdots)}.
\end{equation*}
It follows now from Proposition~\ref{3terms} that
$U_n(f)=XU_n(h)$. Alternatively we can show this using the same
type of argument as in the proof of
Theorem~\ref{formula:canonical} as follows. Suppose that $g\preceq
f$ in the Bruhat ordering. Then $g$ must be of the form
$(a|a|\cdots a\cdots)$, $(a-1|a|\cdots a-1\cdots)$, $(a|a-1|\cdots
a-1\cdots)$, $(\cdots|a|\cdots )$ or $(a|\cdots|\cdots )$. It is
easy to see that $Y^{3}L_n(g)=0$, where $Y=E_{a-1}$. Thus in this
case the assumptions of Theorem~\ref{cr:simplicity} is satisfied
and hence $Y^{(2)}L_n(g)$ is irreducible. Therefore the same
argument for Theorem~\ref{formula:canonical} can be applied to
show that $XU_n((a-1|a-1|\cdots a\cdots))$ has a simple cosocle
and is isomorphic to $U_n((a|a|\cdots a\cdots))$.

In (ii) we set $h=(a-1|a-1|\cdots a-1,a\cdots)$ and
$X=F^{(2)}_{a-1}$. Now any weight less than $h$ is of the form
$(\cdots|a-1|\cdots a\cdots)$ or $(\cdots|a-1|\cdots a\cdots)$.
Thus upon application of $X$ the $q$-power is preserved.  It
follows therefore that $XU_{(a-1|a-1|\cdots
a-1,a\cdots)}=U_{(a|a|\cdots a-1,a\cdots)}$. Furthermore it is
easy to check that if $g\preceq f$, then $Y^{3}g=0$, and hence the
assumption in Theorem~\ref{cr:simplicity} is also satisfied. Thus
$XU_n((a-1|a-1|\cdots a-1,a\cdots))=U_n((a|a|\cdots
a-1,a\cdots))$.

In (iii) set $h=(a-1|a-1|\cdots a-1,a\cdots)$ and $X=F_{a-1}$.  If
$g\prec h$, then $g$ is of the form $(\cdots|a-1|\cdots a\cdots)$
or $(a-1|\cdots|\cdots a\cdots)$ and so we see that $U_{f}=X U_h$.
If $g\preceq f$, then $g$ is of the form $(a|a-1|\cdots
a-1,a\cdots)$, $(a-1|a|\cdots a-1,a\cdots)$, $(\cdots|a-1|\cdots
a-1\cdots)$, $(a-1|\cdots|\cdots a-1\cdots)$, $(\cdots|a|\cdots
a\cdots)$ or $(a|\cdots|\cdots a\cdots)$. Let $Y=E_{a-1}$ and we
see that $Y^{2}g=0$ satisfying the assumption of
Theorem~\ref{cr:simplicity}.

In (iv) set $h=(a-1|\cdots|\cdots a\cdots)$ and $X=F_{a-1}$. If
$g\prec h$, then $g$ is of the form $(a-1|\cdots|\cdots a\cdots)$,
$(\cdots|a-1|\cdots a\cdots)$, or $(a-1|a-1|\cdots a-1,a\cdots)$.
>From this we see that $XU_h=U_f$. If $g\preceq f$, then $g$ is of
the form $(a|\cdots|\cdots a\cdots)$, $(a-1|\cdots|\cdots
a-1\cdots)$, $(a|a-1|\cdots a-1,a\cdots)$, $(a-1|a|\cdots
a-1,a\cdots)$ or $(\cdots|a|\cdots a\cdots)$.  So we have
$Y^{2}g=0$, for $Y=E_{a-1}$.

In (v) set $h=(a-1|a-1|\cdots a\cdots)$, while in (vi) set
$h=(a-1|\cdots|\cdots a-1, a\cdots)$.  Here $X=F_{a-1}$. In either
case we have $XU_h=U_f$ and if $g\preceq f$, then $Y^{2}g=0$, for
$Y=E_{a-1}$.

\subsection{Proof of Theorem \ref{canonical11n}, II}
In this subsection, we consider the case when
$|\Sigma_{f_{13}}|=0$ and $|\Sigma_{f_{23}}|=1$. Here we have the
following possibilities.

\begin{itemize}
\item[(i)] $f=(\cdots|a|\cdots a\cdots)$

\item[(ii)] $f=(\cdots|a|\cdots a-1, a\cdots)$

\item[(iii)] $f=(a-1|a|\cdots a\cdots)$.
\end{itemize}

In (i) we set $h=(\cdots|a-1|\cdots a\cdots)$, while in (ii) we
set $h=(\cdots|a-1|\cdots a-1, a\cdots)$.  In both cases
$X=F_{a-1}$ and it is easy to see that in either case we have
$XU_h=U_f$. In (i) if $g\preceq f$, then $g$ is of the form
$(\cdots|a|\cdots a\cdots)$, $(a|\cdots|\cdots a\cdots)$,
$(\cdots|a-1|\cdots a-1\cdots)$, $(a-1|\cdots|\cdots a-1\cdots)$,
$(a-1|a|\cdots a-1,a\cdots)$ or $(a|a-1|\cdots a-1,a\cdots)$.
Clearly $Y^{2}g=0$, for $Y=E_{a-1}$.

In (ii) if $g\preceq f$, then $g$ is of the form $(\cdots|a|\cdots
a-1,a\cdots)$ or $(a|\cdots|\cdots a-1,a\cdots)$.  Also we have
$Y^{2}g=0$.  So in both cases the hypothesis of
Theorem~\ref{cr:simplicity} is satisfied, and thus
$U_n(f)=XU_n(h)$.

Finally for (iii) we consider first the case $f=(a-1|a|\cdots
a\cdots)$, where $a-2$ is not contained in $\cdots$.  We set
$X=F_{a-2}$ and $h=(a-2|a|\cdots a\cdots)$. It is easy to check
that $U_f=XU_h$. Next let $g=(a-2|a-1|\cdots a\cdots)$ and
$X'=F_{a-1}$.  Again it is easy to see that $U_h=X'U_g$, so that
we have $U_f=XX'U_g$.  Now $g$ is typical and hence $U_g=K_g$.
Thus we obtain
\begin{equation*}
U_f=XX'K_g=K_{(a-1|a|\cdots a\cdots)}+qK_{(a-1|a-1|\cdots
a-1\cdots)}+qK_{(a-2|a-1|\cdots a-2\cdots)}
\end{equation*}
By Proposition~\ref{3terms} $XX'U_n(g)$ is isomorphic to $U_n(f)$.
Now $X'U_n(g)$ has a parabolic Verma flag of length two, and hence
by Proposition~\ref{3terms} again, we see that $X'U_n(g)=U_n(h)$.
Thus we conclude that $XU_n(h)=U_n(f)$.

We will use ${x}\sim y$ to denote the sequence of integers from
$x$ to $y$. Suppose that $f=(a-1|a|\cdots, (a-k+1)\sim
(a-2),a\cdots)$ and $a-k$ is not in $\cdots$, where $k \ge 3$. We
consider the following sequence
\begin{align*}
&(a-1|a|\cdots, (a-k+1)\sim
(a-2),a\cdots)\stackrel{E_{a-k}}{\leftarrow}\\
&(a-1|a|\cdots, a-k, (a-k+2)\sim (a-2),a\cdots)
\stackrel{E_{a-k+1}}{\leftarrow} \\
&(a-1|a|\cdots
a-k,a-k+1,(a-k+3)\sim (a-2),a\cdots)\stackrel{E_{a-k+2}}{\leftarrow}\cdots\\
&\cdots \stackrel{E_{a-3}}{\leftarrow} (a-1|a|\cdots,
(a-k)\sim(a-3),a\cdots)=g
\end{align*}

\begin{lemma} Let $x$, $y$ and $a$ be distinct and $x,y>a$.
Let $f=(x|y|\cdots a\cdots y\cdots)$, where $\cdots$ denotes an
expression with no $x$, $a$ and $a-1$.  Let $h=(x|y|\cdots
a-1\cdots y\cdots)$ and $X=E_{a-1}$.  Then $XU_h=U_f$.
\end{lemma}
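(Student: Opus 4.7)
The plan is to verify that $XU_h$ satisfies the three defining properties of the canonical basis element $U_f$ from Theorem~\ref{th:canonical}: bar-invariance, leading monomial $K_f$ with coefficient $1$, and all other monomial coefficients lying in $q\Z[q]$. Uniqueness would then force $XU_h = U_f$. Bar-invariance of $XU_h$ is immediate from Proposition~\ref{thm:involutionsuper}(2), since $E_{a-1}$ is bar-invariant in $\mathcal U$ and $U_h$ is bar-invariant.

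The first key computation is that $XK_h = K_f$ exactly. Using the iterated coproduct
\begin{equation*}
\Delta^2(E_{a-1}) = 1\otimes 1\otimes E_{a-1} + 1\otimes E_{a-1}\otimes K_{a,a-1} + E_{a-1}\otimes K_{a,a-1}\otimes K_{a,a-1},
\end{equation*}
applied to $K_h = v_x\otimes v_y\otimes w_{\cdots (a-1)\cdots y\cdots}$: the assumption $x,y>a$ gives $x,y\neq a$, so $E_{a-1}v_x = E_{a-1}v_y = 0$ and only the first summand survives. Within the $w$-wedge, the hypothesis that $\cdots$ contains no $a-1$ means there is a unique slot with value $a-1$; $E_{a-1}$ raises it to $a$, and since $\cdots$ contains no $a$ and $y\neq a$ either, the resulting wedge is already in sorted order (no permutation $q$-power) and the trailing $K_{a,a-1}$ factors act as the identity on every remaining $w$-entry. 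Hence $XK_h = K_f$.

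For the remaining terms in the expansion $U_h = K_h + \sum_{g\prec h}u_{g,h}(q)K_g$ with $u_{g,h}(q)\in q\Z[q]$, the same slot-matching argument shows that no $XK_g$ contributes to the $K_f$-coefficient: producing $K_f$ from $E_{a-1}K_g$ requires $g$ to have $v$-multiset $\{x,y\}$ and $w$-multiset obtained from that of $f$ by replacing the unique $w_a$ by $w_{a-1}$, which forces $g=h$. Therefore $[K_f]XU_h = 1$. To push this to the full canonical basis identity, expand $XU_h = \sum_\nu a_\nu(q)U_\nu$; bar-invariance of $X$ makes each $a_\nu$ bar-invariant, and by the positivity in Theorem~\ref{th:BrConj}(2) each $a_\nu \in \N[q,q^{-1}]$. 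Comparing $K_f$-coefficients gives $1 = a_f(q) + \sum_{\nu\succ f} a_\nu(q)\,u_{f,\nu}(q)$ with $u_{f,\nu}\in q\N[q]$; since the right-hand sum lies in $q\N[q,q^{-1}]$ with no cancellation possible against the constant $1$, comparing $q$-powers forces $a_f=1$ and $a_\nu u_{f,\nu}=0$ for each $\nu\succ f$.

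The main obstacle is ruling out the residual spurious $U_\nu$ with $\nu\succ f$ and $u_{f,\nu}=0$. I would handle this by repeating the coefficient comparison at $K_h$ (for which $[K_h]XU_h = 0$ by the same slot-matching) and iteratively at other distinguished monomials to exhaust all $\nu\succ f$ admitted by the $\epsilon$-weight and $K$-weight constraints; alternatively, the cleanest route is to lift to modules via Theorem~\ref{superChe=tran}, observe that the $\mathfrak{sl}_2$-categorification hypotheses are satisfied (since $F_{a-1}^2$ annihilates every composition factor of $U_n(h)$ under the hypotheses on $x,y,a$), and then apply Theorem~\ref{cr:simplicity} to conclude that $XU_n(h)$ is indecomposable with a simple cosocle and therefore equals $U_n(f)$. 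Either route yields $XU_h = U_f$.
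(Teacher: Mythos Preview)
Your approach diverges from the paper's and leaves a real gap. The paper's proof is a direct monomial computation: it observes that every $g\preceq h$ falls into one of four local shapes (with respect to where the values $a-1$ and $a$ sit), and for each shape one checks that $E_{a-1}K_g$ is a \emph{single} monomial $K_{g'}$ with coefficient exactly $1$. (For forms 1 and 2 the only slot is the $w_{a-1}$ in the wedge, while for forms 3 and 4 the $w$-contribution vanishes because $w_{a-1}\to w_a$ creates a repeated entry, and the surviving $v_a\to v_{a-1}$ term has its $K_{a,a-1}$-factor equal to $q\cdot q^{-1}=1$.) Thus $XU_h=K_f+\sum_{g\prec h}u_{g,h}(q)K_{g'(g)}$ with every non-leading coefficient in $q\Z[q]$, and bar-invariance forces $XU_h=U_f$. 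You carried out this computation only for $g=h$; extending it to all $g\preceq h$ is precisely the missing step that finishes the argument cleanly.

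Your attempt to bypass this via positivity and the canonical-basis expansion $XU_h=\sum_\nu a_\nu U_\nu$ does not close. The $K_f$-coefficient comparison gives $a_f=1$ and kills $a_\nu$ only when $u_{f,\nu}\neq 0$; it says nothing about $\nu\prec f$ or $\nu$ incomparable to $f$, and your phrase ``iterating at other distinguished monomials'' is not a proof. The module route is subtly circular here: establishing $XU_n(h)=U_n(f)$ together with the inductive hypothesis $i[U_n(h)]=U_h(1)$ yields $i[U_n(f)]=(XU_h)(1)$, and from this one can indeed kill $a_\nu$ for $\nu\not\preceq f$ (compare $K_\nu(1)$-coefficients and use that $U_n(f)$ has Verma flag weights $\preceq f$); but for $\nu\prec f$ you would need $(U_n(f):K_n(\nu))=u_{\nu,f}(1)$, which is exactly the parabolic BKL statement being proved. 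Also, the hypothesis you want for the adjunction argument is $F_{a-1}^2 L_n(g')=0$ for $g'\preceq f$ (the composition factors of $XU_n(h)$), not for the composition factors of $U_n(h)$ itself.
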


\begin{proof}
Any $g\preceq h$ is of the form $(x|y|\cdots a-1\cdots y\cdots)$,
$(y|x|\cdots a-1\cdots y\cdots)$, $(x|a|\cdots a-1\cdots a\cdots)$
or $(a|x|\cdots a-1\cdots a\cdots)$.
\end{proof}

Thus we have
\begin{equation*}
E_{a-k}E_{a-k+1}\cdots E_{a-4}E_{a-3}U_g=U_f.
\end{equation*}
Now $U_g=K_{(a-1|a|\cdots a\cdots)}+qK_{(a-1|a-1|\cdots
a-1\cdots)}+qK_{(a-2|a-1|\cdots a-2\cdots)}$.  A simple
calculation shows that
\begin{equation*}
U_f=K_{(a-1|a|\cdots a\cdots)}+qK_{(a-1|a-1|\cdots
a-1\cdots)}+qK_{(a-k|a-1|\cdots a-k\cdots)}.
\end{equation*}
Now Proposition~\ref{3terms} shows that $U_n(f)$ has a Verma flag
consisting of parabolic Verma modules of these three highest
weights.  Now every $E_{a-k+i}\cdots E_{a-4}E_{a-3}U_g$, for every
$i>0$, contains three monomials, and thus $E_{a-k+i}\cdots
E_{a-4}E_{a-3}U_n(g)$ is a tilting module by
Proposition~\ref{3terms}. In particular $E_{a-k+1}\cdots
E_{a-4}E_{a-3}U_n(g)=U_n(h)$ and hence $E_{a-k}U_n(h)=U_n(f)$.

\subsection{Formulas for canonical basis elements}
\label{subsec:formula11n}

In this subsection, we provide a complete list of formulas for the
canonical basis elements in $\mathcal E^{1,1|n}$ (except the
trivial case when $f$ is typical). They are computed using
Procedure~\ref{procedure:11n}, and thus by Theorem
\ref{canonical11n} we find explicit Verma flag weights of the
tilting modules in the category $\mathcal O^+_{1,1|n}$ as well.

Recall that we use ${x}\sim a$ to denote the sequence of integers
from $x$ to $a$, and we shall use $\widehat{x}\sim a$ the sequence
of integers from $x+1$ to $a$. We assume $c>a>b$.
\begin{align*}
&  {\text{Atypicality } 2:} \\
(A1)
 &\;\; U_{(a|b|\cdots\widehat{x}\sim b\sim
a\cdots)}=K_{(a|b|\cdots\widehat{x}\sim b\sim
a\cdots)}+qK_{(b|a|\cdots\widehat{x}\sim b\sim
a\cdots)}+qK_{(a|x|\cdots{x}\sim \widehat{b}\sim a\cdots)}\\
&\qquad\qquad\qquad  +q^2K_{(b|x|\cdots{x}\sim b\sim
\widehat{a}\cdots)} +q^2K_{(x|a|\cdots{x}\sim\widehat{b}\sim
a\cdots)} +q^3K_{(x|b|\cdots{x}\sim b\sim
\widehat{a}\cdots)}.\\
(A2)
 &\;\; U_{(a|b|\cdots \widehat{y}\sim b\cdots\widehat{x}\sim
a\cdots)}=K_{(a|b|\cdots \widehat{y}\sim b\cdots\widehat{x}\sim
a\cdots)}+qK_{(x|b|\cdots \widehat{y}\sim b\cdots{x}\sim
\widehat{a}\cdots)}+qK_{(a|y|\cdots {y}\sim
\widehat{b}\cdots\widehat{x}\sim a\cdots)}\\
&\qquad\qquad\qquad +qK_{(b|a|\cdots \widehat{y}\sim
b\cdots\widehat{x}\sim a\cdots)}+q^2K_{(b|x|\cdots \widehat{y}\sim
b\cdots{x}\sim \widehat{a}\cdots)}+q^2K_{(y|a|\cdots {y}\sim
\widehat{b}\cdots\widehat{x}\sim a\cdots)}\\
&\qquad\qquad\qquad +q^2K_{(x|y|\cdots {y}\sim
\widehat{b}\cdots{x}\sim \widehat{a}\cdots)}+q^3K_{(y|x|\cdots
{y}\sim
\widehat{b}\cdots{x}\sim \widehat{a}\cdots)}.\\
(A3)
 &\;\; U_{(b|a|\cdots\widehat{y}\sim\widehat{x}\sim b\sim
a\cdots)}=K_{(b|a|\cdots\widehat{y}\sim\widehat{x}\sim b\sim
a\cdots)}+qK_{(x|a|\cdots\widehat{y}\sim{x}\sim \widehat{b}\sim
a\cdots)}+qK_{(b|x|\cdots\widehat{y}\sim{x}\sim b\sim
\widehat{a}\cdots)}\\
&\qquad\qquad\qquad +qK_{(y|b|\cdots{y}\sim\widehat{x}\sim b\sim
\widehat{a}\cdots)}+q^2K_{(x|b|\cdots\widehat{y}\sim{x}\sim b\sim
\widehat{a}\cdots)}+q^2K_{(y|x|\cdots{y}\sim{x}\sim
\widehat{b}\sim \widehat{a}\cdots)}.\\
(A4) &\;\; U_{(b|a|\cdots\widehat{y}\sim b\cdots\widehat{x}\sim
a\cdots)}=K_{(b|a|\cdots\widehat{y}\sim b\cdots\widehat{x}\sim
a\cdots)}+qK_{(y|a|\cdots{y}\sim\widehat{b}\cdots\widehat{x}\sim
a\cdots)}\\
&\qquad\qquad\qquad\qquad  +qK_{(b|x|\cdots\widehat{y}\sim
b\cdots{x}\sim \widehat{a}\cdots)}+q^2K_{(y|x|\cdots{y}\sim
\widehat{b}\cdots{x}\sim \widehat{a}\cdots)}.
\end{align*}

\begin{align*}
&  {\text{Atypicality } 1} \quad (b<x<a \text{ is assumed below}): \\
(B1)
 &\; U_{(a|c|\cdots\widehat{x}\sim
a\cdots)}=K_{(a|c|\cdots\widehat{x}\sim
a\cdots)}+qK_{(x|c|\cdots{x}\sim
\widehat{a}\cdots)}.\\
(B2)
 &\; U_{(a|b|\cdots\widehat{x}\sim
a\cdots)}=K_{(a|b|\cdots\widehat{x}\sim
a\cdots)}+qK_{(b|a|\cdots\widehat{x}\sim
a\cdots)}+qK_{(x|b|\cdots{x}\sim \widehat{a}\cdots)}
+q^2K_{(b|x|\cdots{x}\sim
\widehat{a}\cdots)}.\\
(B3)
 &\; U_{(a|x|\cdots\widehat{x}\sim
a\cdots)}=K_{(a|x|\cdots\widehat{x}\sim
a\cdots)}+qK_{(x|a|\cdots\widehat{x}\sim
a\cdots)}+q^2K_{(x|x|\cdots{x}\sim
\widehat{a}\cdots)}.\\
(B4)
 &\; U_{(c|a|\cdots\widehat{x}\sim
a\cdots)}=K_{(c|a|\cdots\widehat{x}\sim
a\cdots)}+qK_{(a|c|\cdots\widehat{x}\sim
a\cdots)}+qK_{(c|x|\cdots{x}\sim
\widehat{a}\cdots)}+q^2K_{(x|c|\cdots{x}\sim
\widehat{a}\cdots)}.\\
(B5)
 &\; U_{(b|a|\cdots\widehat{x}\sim
a\cdots)}=K_{(b|a|\cdots\widehat{x}\sim
a\cdots)}+qK_{(b|x|\cdots{x}\sim
\widehat{a}\cdots)}.\\
(B6)
 &\; U_{(x|a|\cdots\widehat{y}\sim\widehat{x}\sim
a\cdots)}=K_{(x|a|\cdots\widehat{y}\sim\widehat{x}\sim
a\cdots)}+qK_{(x|x|\cdots\widehat{y}\sim{x}\sim
\widehat{a}\cdots)}+qK_{(y|x|\cdots{y}\sim\widehat{x}\sim
\widehat{a}\cdots)}.\\
(S)
 &\; U_{(a|a|\cdots \widehat{x}\sim a\cdots)}=K_{(a|a|\cdots
\widehat{x}\sim a\cdots)}+qK_{(a|x|\cdots
x\sim\widehat{a}\cdots)}+q^2K_{(x|a|\cdots
{x}\sim\widehat{a}\cdots)}.\\
& \text{\quad (This last weight is special in the sense that it
has three identical values.)}
\end{align*}

\subsection{Super duality: a weak version}

The following weak version of Conjecture \ref{conj:duality} holds
in the case ${\bf m}=(1,1)$.

\begin{theorem}
The categories $\mathcal O^+_{1,1|\infty}$ and $\mathcal
O^+_{(1,1)+\infty}$ admit isomorphic Kazhdan-Lusztig theories. In
particular, for $f,g\in \Z^{(1,1)+\infty}_+$ we have
\begin{eqnarray*}
(\mathcal U(f):\mathcal K(g)) &=&(U(f^\natural):K(g^\natural)),  \\
{[}\mathcal K(f):\mathcal L(g)] &=& [K(f^\natural):L(g^\natural)].
\end{eqnarray*}
\end{theorem}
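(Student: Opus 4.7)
The plan is to reduce each of the two multiplicity identities to a finite-rank statement via truncation stability, apply the known finite-rank Kazhdan--Lusztig and Brundan--Kazhdan--Lusztig theories on each side separately, and then match the resulting polynomial values via the Fock-space identification of Theorem~\ref{correspondence}.

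Concretely, fix $f, g \in \Z_+^{(1,1)+\infty}$ and choose $n \gg 0$. On the non-super side, Theorem~\ref{tiltInf:red}(3) gives $(\mathcal U(f):\mathcal K(g)) = (\mathcal U_n(f^{(n)}):\mathcal K_n(g^{(n)}))$, while the counterpart of Proposition~\ref{series}(2) for $\Oi$ gives $[\mathcal K(f):\mathcal L(g)] = [\mathcal K_n(f^{(n)}):\mathcal L_n(g^{(n)})]$. The parallel reductions on the super side are supplied by Theorem~\ref{tiltInf}(3) and Proposition~\ref{series}(2) applied to $f^\natural$ and $g^\natural$. I then apply Theorem~\ref{red-KL-finite} (together with Remark~\ref{dualUK}) to express the finite-$n$ non-super multiplicities as values of $\mathfrak u$- and $\mathfrak l$-polynomials at $q=1$, and use the parabolic BKL conjecture for $\mathcal O^+_{1,1|n}$ just established in this section --- a consequence of Theorem~\ref{canonical11n} together with Proposition~\ref{typical} for the typical case --- to express the super multiplicities as values of $u$- and $\ell$-polynomials at $q=1$.

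At this point Theorem~\ref{correspondence}(5) supplies the key identifications $\mathfrak u_{g,f}(q) = u_{g^\natural, f^\natural}(q)$ and $\mathfrak l_{g,f}(q) = \ell_{g^\natural, f^\natural}(q)$ for $f, g \in \Z_+^{(1,1)+\infty}$. Combined with the truncation-stability of these polynomials provided by Corollary~\ref{aux43}(4) and Proposition~\ref{commutativity-reduct}(3), the finite-$n$ polynomial values appearing on the two sides all coincide with a common $n=\infty$ polynomial evaluation, which yields both multiplicity identities simultaneously and hence the claimed isomorphism of Kazhdan--Lusztig theories.

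The only substantive ingredient is the finite-rank parabolic BKL conjecture for $\mathcal O^+_{1,1|n}$ itself, which rests on the Chuang--Rouquier $\mathfrak{sl}_2$-categorification (Theorem~\ref{cr:simplicity}) and the case-by-case canonical-basis computation organized by Procedure~\ref{procedure:11n}; that is therefore the main step, and it has already been carried out. Once it is in hand, the assembly above is essentially formal, and the one technical point --- compatibility between the super-side truncation $\mathfrak{Tr}_{n',n}$ and the non-super-side truncation $\textsf{Tr}_{n',n}$ under $\natural$ --- is already packaged into the Fock-space constructions of Sections~\ref{sec:basic} and \ref{sec:FockredKL}, so I do not expect any further genuine obstacle.
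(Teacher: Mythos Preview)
Your proposal is correct and follows essentially the same approach as the paper: reduce to finite $n$ via truncation stability (Theorems~\ref{tiltInf}, \ref{tiltInf:red}, Proposition~\ref{series} and its non-super analogue), invoke the finite-rank KL theory (Theorem~\ref{red-KL-finite}) and the newly established BKL conjecture for $\mathcal O^+_{1,1|n}$ (Theorem~\ref{canonical11n}), and match via Theorem~\ref{correspondence} together with the polynomial truncation compatibilities of Corollary~\ref{aux43} and Proposition~\ref{commutativity-reduct}. The paper organizes the second identity slightly differently --- deducing it from the first via the dualities in Remarks~\ref{dualUK:super}, \ref{dualUK} and (\ref{eq:tiltduality}) rather than appealing directly to $\ell$-polynomials --- but this is a presentational rather than a mathematical difference.
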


\begin{proof}
In light of Theorem \ref{canonical11n}~(iii), Theorem
\ref{tiltInf} and Corollary \ref{aux43} we see that
\begin{equation*}
u_{gf}(1)=u_{g^{(n)}f^{(n)}}(1)=(U_n(f^{(n)}):K_n(g^{(n)}))=(U(f):K(g)),\quad
n\gg 0.
\end{equation*}
By Theorem \ref{tiltInf:red}, Theorem \ref{th:multiCan} and
Proposition \ref{commutativity-reduct} we have for
$f',g'\in\Z^{(1,1)+\infty}_+$
\begin{equation*}
\mathfrak u_{g',f'}(1)=\mathfrak
u_{g'^{(n)},f'^{(n)}}(1)=(\mathcal U_n(f'^{(n)}):\mathcal
K_n(g'^{(n)}))=(\mathcal U(f'):\mathcal K(g')),\quad n\gg 0.
\end{equation*}
Now the first identity in the theorem follows by
Theorem~\ref{correspondence}.

The second identity in the theorem follows by Remarks
\ref{dualUK:super} and \ref{dualUK}, Theorem \ref{red-KL-finite}
and (\ref{eq:tiltduality}), together with the corresponding
compatibility of truncation functors on irreducible
representations and truncation maps on dual canonical basis
elements.
\end{proof}
\section{The category $\mathcal O_{m,1|1}^+$ of
$\mathfrak{gl}(m+1|1)$-modules} \label{sec:categorym11}

In this section, we analyze completely the case for $\m =(m,1)$
and $n=1$. We find explicit formulas for canonical basis in
$\mathcal E^{m,1|1}$, and establish the parabolic BKL
Conjecture~\ref{conj:BrKL} for the category $\mathcal
O^+_{m,1|1}$.

\subsection{A procedure}
Denote $\Sigma_{f_{13}}=(i|1)$ if there exists $i<-1$ with
$f(i)=f(1)$, and otherwise set $\Sigma_{f_{13}}=\emptyset$. Also
denote $\Sigma_{f_{23}}=(-1|1)$ if $f(-1)=f(1)$, and otherwise set
$\Sigma_{f_{23}}=\emptyset$. If
$\Sigma_{f_{13}}\cup\Sigma_{f_{23}}=\emptyset$, then $\#f=0$.

\begin{procedure}\label{procedure:m11}
Let $f\in\Z^{m,1|1}_+$ be such that
$\Sigma_{f_{13}}\cup\Sigma_{f_{23}}\not=\emptyset$. \item[Step 1]
If $\Sigma_{f_{13}}=\emptyset$, go to Step 5. Otherwise let
$\Sigma_{f_{13}}=(i|1)$ and go to Step 2.

\item[Step 2] If $i<-1$ and $f(i+1)=f(i)-1$, replace $i$ by $i+1$
and repeat Step 2. Otherwise go to Step 3.

\item[Step 3] If $f(i)=f(-1)$, go to Step 4. Otherwise we set
$h(i)=f(i)-1$ and $h(s)=f(s)$, for $s\not=i$. Let $X=F_{f(i)-1}$.
Stop.

\item[Step 4] Set $h(i)=h(-1)=f(i)-1$, and $h(s)=f(s)$, for
$s\not=i,-1$.  Let $X=F_{f(i)-1}^{(2)}$.  Stop.

\item[Step 5] We have $f(-1)=f(1)$.  If there exists $i<-1$ such
that $f(i)=f(-1)-1$, go to Step 2. Otherwise set $h(-1)=f(-1)-1$,
and $h(s)=f(s)$, for $s\not=-1$.  Let $X=F_{f(-1)-1}$.  Stop.
\end{procedure}

\subsection{Formulas for canonical basis}
\label{subsec:formula:m11}

We will leave the straightforward verification of the following to
the reader.

\begin{proposition}  \label{canonical}
Let $f$ be such that $\Sigma_{f_{13}}\cup\Sigma_{f_{23}} \not
=\emptyset$. Let $X$ and $h$ be as defined in Procedure
\ref{procedure:m11}. Then we have $XU_h=U_f$.
\end{proposition}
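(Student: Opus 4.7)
The plan is to apply the uniqueness characterization of the canonical basis (the theorem immediately following Proposition~\ref{thm:involutionsuper}): $U_f$ is the unique bar-invariant element of $\widehat{\mathcal E}^{m,1|1}$ lying in $K_f + \widehat{\sum}_{g \prec f} q\Z[q] K_g$. Thus, verifying $XU_h = U_f$ reduces to checking that $XU_h$ is bar-invariant and enjoys this same triangular expansion.

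Bar-invariance of $XU_h$ is immediate: each Chevalley generator $F_a$, and hence every divided power $F_a^{(r)}$, is fixed by the bar involution on $\mathcal U$, while $U_h$ is bar-invariant by definition of canonical basis. Proposition~\ref{thm:involutionsuper}(2) then gives $\overline{XU_h} = \overline X\,\overline{U_h} = XU_h$.

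For the triangular expansion, I would proceed case by case according to the branch (Step~3, Step~4, or Step~5) in which Procedure~\ref{procedure:m11} terminates. Writing $U_h = K_h + \sum_{h' \prec h} a_{h',h}(q)\, K_{h'}$ with $a_{h',h}(q) \in q\Z[q]$, I compute $X K_{h'}$ for every $h'$ using the iterated coproduct of $X$ acting on $\mathcal E^{m,1|1} = \Lambda^m \mathbb V \otimes \mathbb V \otimes \mathbb W$, together with the explicit action formulas for $F_a$ on $v_b$ and $w_b$. In the branches terminating at Steps~3 and 5, where $X = F_a$ for an appropriate $a$, the computation is direct: by construction of the procedure, there is only one place in $h$ where the required raising is possible, yielding $K_f$ with coefficient $1$; all further monomials produced from $XK_h$ (through the $K_{a,a+1}$ factors in the coproduct) and from $X\bigl(a_{h',h}(q)K_{h'}\bigr)$ are readily checked to be $K_g$ with $g \prec f$ and coefficient in $q\Z[q]$, using the super Bruhat order characterization from Subsection~\ref{subsec:superFock}. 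The analysis parallels the template carried out in Section~\ref{canonical:11n} for $\mathcal E^{1,1|n}$.

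The main obstacle will lie in the Step~4 branch, where $X = F_{a-1}^{(2)}$ must simultaneously raise the two equal values $h(i) = h(-1) = a-1$ sitting in \emph{different} tensor slots (the $\Lambda^m \mathbb V$ factor and the middle $\mathbb V$ factor). Here the iterated coproduct of $F_{a-1}^{(2)}$ breaks into six summands that interleave divided powers $F_{a-1}^{(k)}$ with $K_{a-1,a}$ factors, and one must carefully track the $q$-weights they contribute when acting on $K_h$ and on each lower monomial $K_{h'}$ in $U_h$. The crucial observation is that the summand $q\, K_{a-1,a} F_{a-1} \otimes F_{a-1} \otimes 1$ is the unique one that yields $K_f$ with net coefficient $1$, while the other five summands either annihilate $K_h$ (for instance, $F_{a-1}^{(2)}$ applied to a single tensor slot vanishes because only one $a-1$ is available in that slot) or produce monomials $K_g$ with $g \prec f$ and coefficient of strictly positive $q$-degree. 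Once triangularity is confirmed in all three branches, the uniqueness of canonical basis forces $XU_h = U_f$.
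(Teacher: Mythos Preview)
Your approach is correct and is precisely the ``straightforward verification'' that the paper explicitly declines to write out (its entire proof reads: ``We will leave the straightforward verification of the following to the reader''). Your outline---bar-invariance of $XU_h$ via Proposition~\ref{thm:involutionsuper}(2), followed by a case-by-case check that $XU_h \in K_f + \sum_g q\Z[q]K_g$ according to the terminating step of Procedure~\ref{procedure:m11}---is exactly what that verification entails, and your analysis of the Step~4 branch (identifying the summand $qK_{a-1,a}F_{a-1}\otimes F_{a-1}\otimes 1$ in $\Delta^2(F_{a-1}^{(2)})$ as the unique contributor to $K_f$, with the $K_{a-1,a}$ factor supplying the compensating $q^{-1}$) is on target.
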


Repeated application of Procedure~\ref{procedure:m11} will produce
an element $g$ with $\#g=0$. By Proposition~\ref{typical} we have
$U_g=U_{g_{12}}\otimes w_{g(1)}$. Thus the above procedure
computes all canonical basis elements in $\mathcal E^{m,1|1}$.
Below we present a complete list of formulas for the canonical
basis elements (except the really simple case when $f$ is
typical). We caution that some cases will be missing if $m$ is too
small.

\begin{align*}
& {\bf \text{Atypical cases:}}\\
 (C1)
 &\quad U_{(\cdots a\sim b\sim\widehat{x}\cdots|b|a)}=K_{(\cdots
a\sim b\sim\widehat{x}\cdots|b|a)}+q K_{(\cdots \widehat{a}\sim
b\sim{x}\cdots|b|x)},\quad a>b>x.\\
(C2)
 &\quad U_{(\cdots a \sim\widehat{x}\cdots|x|a)}=K_{(\cdots a
\sim\widehat{x}\cdots|x|a)}+qK_{(\cdots a
\sim\widehat{x+1},x\cdots|x+1|a)} \\
& \qquad\qquad\qquad\qquad  +qK_{(\cdots\widehat{a}
\sim{x}\cdots|x+1|x+1)}+q^2K_{(\cdots \widehat{a}
\sim{x}\cdots|x|x)},\quad a-1>x.\\
(C3)
 &\quad U_{(\cdots a,\widehat{a-1}\cdots|a-1|a)}=K_{(\cdots
a,\widehat{a-1}\cdots|a-1|a)}+q K_{(\cdots
\widehat{a},{a-1}\cdots|a|a)}+
q^2K_{(\cdots \widehat{a},{a-1}\cdots|a-1|a-1)}.\\
(C4)
 &\quad U_{(\cdots
c,\widehat{a}\sim\widehat{x}\cdots|a|a)}=K_{(\cdots
c,\widehat{a}\sim\widehat{x}\cdots|a|a)}+qK_{(\cdots
\widehat{c},{a}\sim\widehat{x}\cdots|c|a)}+qK_{(\cdots
c,\widehat{a}\sim\widehat{x}\cdots|a-1|a-1)}\\
&\qquad\qquad\qquad\qquad +qK_{(\cdots
c,\widehat{a-1}\sim{x}\cdots|a-1|x)}+q^2K_{(\cdots
\widehat{c},{a-1}\sim{x}\cdots|c|x)},\quad a-1>x.\\
(C5)
 &\quad
U_{(\widehat{a}\sim\widehat{x}\cdots|a|a)}=K_{(\widehat{a}\sim\widehat{x}\cdots|a|a)}+qK_{(
\widehat{a}\sim\widehat{x}\cdots|a-1|a-1)}+qK_{(\widehat{a},\widehat{a-1}\sim{x}\cdots|a-1|x)},\;
a-1>x.\\
(C6)
 &\quad
U_{(\widehat{a},\widehat{a-1}\cdots|a|a)}=K_{(\widehat{a},\widehat{a-1}\cdots|a|a)}+qK_{(
\widehat{a},\widehat{a-1}\cdots|a-1|a-1)}.\\
(C7)
 &\quad U_{(\cdots
c,\widehat{a},\widehat{a-1}\cdots|a|a)}=K_{(\cdots
c,\widehat{a},\widehat{a-1}\cdots|a|a)}+qK_{(\cdots
\widehat{c},{a},\widehat{a-1}\cdots|c|a)} \\
& \qquad\qquad\qquad\qquad \;\;  +qK_{(\cdots
c,\widehat{a},\widehat{a-1}\cdots|a-1|a-1)} +q^2K_{(\cdots \widehat{c},{a-1}\cdots|c|a-1)}.\\
(C8)&\quad U_{(\cdots d,\widehat{c}\cdots
a\sim\widehat{x}\cdots|c|a)}=K_{(\cdots d,\widehat{c}\cdots
a\sim\widehat{x}\cdots|c|a)}+q K_{(\cdots d,\widehat{c}\cdots
\widehat{a}\sim{x}\cdots|c|x)}  \\
&\qquad\qquad\qquad\qquad+qK_{(\cdots \widehat{d},{c}\cdots
a\sim\widehat{x}\cdots|d|a)}+ q^2 K_{(\cdots \widehat{d},{c}\cdots
\widehat{a}\sim{x}\cdots|d|x)}
,\quad d>c>a.\\
(C9)&\quad U_{(\cdots a\sim\widehat{x}\cdots
e,\widehat{b}\cdots|b|a)}=K_{(\cdots a\sim\widehat{x}\cdots
e,\widehat{b}\cdots|b|a)} +qK_{(\cdots \widehat{a}\sim{x}\cdots
e,\widehat{b}\cdots|b|x)}\\
&\qquad\qquad\qquad\qquad + qK_{(\cdots a\sim\widehat{x}\cdots
\widehat{e},{b}\cdots|e|a)}+ q^2K_{(\cdots
\widehat{a}\sim{x}\cdots \widehat{e},{b}\cdots|e|x)},\quad x>e>b.
\end{align*}

\begin{align*}
 (T1)
 &\quad U_{(\cdots a\sim\widehat{x}\cdots|a|a)}=K_{(\cdots
a\sim\widehat{x}\cdots|a|a)}+qK_{(\cdots
{a}\sim\widehat{x}\cdots|a-1|a-1)}\\
& \qquad\qquad\qquad\qquad +qK_{(\cdots
a,\widehat{a-1}\sim{x}\cdots|a-1|x)}
+q^2K_{(\cdots \widehat{a}\sim{x}\cdots|a|x)},\quad a-1>x.\\
(T2)
 &\quad U_{(\cdots a,\widehat{a-1}\cdots|a|a)}=K_{(\cdots
a,\widehat{a-1}\cdots|a|a)}+qK_{(\cdots
{a},\widehat{a-1}\cdots|a-1|a-1)}+q^2K_{(\cdots
\widehat{a},a-1\cdots|a|a-1)}.\\
& \text{\quad (In the cases (T1, T2) the weights have three
identical values.)}
\end{align*}

The case of (C8) (respectively (C9)), when no such $d$
(respectively $e$) exists, is obtained by dropping the last two
terms.
\subsection{Structure of tilting modules in $\mathcal O^+_{m,1|1}$}

We shall denote the tilting modules in $\mathcal O^+_{m,1|1}$ by
$U(f)$ et cetera.

\begin{theorem}\label{main:m11}
For any $f\in\Z^{m,1|1}_+$ we have $i([U(f)])=U_f(1)$.
\end{theorem}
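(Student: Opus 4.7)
The plan is to establish Theorem~\ref{main:m11} by induction on the number of applications of Procedure~\ref{procedure:m11} needed to reduce $f$ to an element with $\#g = 0$, mirroring closely the strategy used in the proof of Theorem~\ref{canonical11n}. The base case is $\#f = 0$, in which $f$ is typical and the statement follows directly from Proposition~\ref{typical}.

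For the inductive step, suppose $f$ is atypical. Procedure~\ref{procedure:m11} produces $h \in \Z^{m,1|1}_+$ together with an operator $X \in \{F_a, F_a^{(2)}\}$ for an appropriate $a \in \Z$ such that $U_f = X U_h$, by Proposition~\ref{canonical}. By the inductive hypothesis $i([U(h)]) = U_h(1)$, so Theorem~\ref{superChe=tran} yields
\[
i([X U(h)]) \;=\; X\, i([U(h)]) \;=\; X U_h(1) \;=\; U_f(1).
\]
Lemma~\ref{sum:tilt} shows that $X U(h)$ is a direct sum of tilting modules, and since the coefficient of $K_f$ in $U_f$ equals one, the tilting module $U(f)$ must occur as a direct summand of $X U(h)$. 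It therefore suffices to prove that $X U(h)$ is indecomposable.

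For the indecomposability I would carry out a case-by-case analysis following the explicit list of formulas in Subsection~\ref{subsec:formula:m11}. When $U_f$ has at most three monomial terms --- namely the families (C1), (C3), (C5), (C6), (T2), together with the degenerate two-term subcases of (C8) and (C9) --- Proposition~\ref{3terms} applies directly and yields $X U(h) = U(f)$. For the four- and five-term formulas (C2), (C4), (C7), (C8), (C9), and (T1), I would instead invoke the Chuang--Rouquier simplicity theorem (Theorem~\ref{cr:simplicity}) through the biadjunction $\mathrm{Hom}(X U(h), L(g)) \cong \mathrm{Hom}(U(h), X^{*} L(g))$: after inspecting the $a$-string through each $g \preceq f$ in the underlying crystal, one verifies that $X^{*} L(g)$ is either zero or irreducible, and combining this with the simple cosocle of $U(h)$ (available inductively from the parallel conclusion for the shorter procedure) identifies the cosocle of $X U(h)$ with a single simple module $L(\widetilde{F}_a h)$, forcing indecomposability.

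The principal obstacle is the combinatorial bookkeeping for the four- and five-term families: one must enumerate the weights $g \preceq f$ in the super Bruhat order from the explicit monomial expansion of $U_f$, and for each check that the relevant $(a-1)$-string (or $a$-string) in the crystal has length at most one when $X = F_a$, or at most two when $X = F_a^{(2)}$. The specific choice of $X$ and $h$ dictated by the branches of Procedure~\ref{procedure:m11} --- which only alters the single value $f(i)$ (or the pair $f(i), f(-1)$) by $1$ --- is what makes the relevant $a$-entries of each $g$ sufficiently separated for this verification to go through uniformly across the list.
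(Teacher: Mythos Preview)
Your approach is plausible and closely mirrors the paper's proof of Theorem~\ref{canonical11n} for $\m=(1,1)$, but it is \emph{not} the route the paper takes for Theorem~\ref{main:m11}. There the paper argues non-inductively: it applies the \emph{entire} sequence of translation functors produced by Procedure~\ref{procedure:m11} to a typical tilting module in one stroke, obtaining a module $M(f)$ with $i([M(f)])=U_f(1)$, and then proves $M(f)$ is indecomposable case by case using Proposition~\ref{condition:can} (an ${\rm Ext}^1$-criterion forcing specified Verma subquotients into the same indecomposable summand), Lemma~\ref{tiltVerma}, and the $\tau$-self-duality of tilting modules (Corollary~\ref{tiltingdual}) together with explicit socle computations. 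In particular, for the four-term cases (T1), (C8), (C9) the paper rules out a putative second summand by showing its socle and cosocle could not agree; for (C2) it bootstraps from the already-established (T1); and for (C4), (C7) it observes that two of the Verma weights are incomparable, so Proposition~\ref{condition:can} forces three of the Vermas into $U(f)$ and Lemma~\ref{tiltVerma} finishes.

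Your Chuang--Rouquier approach trades these module-theoretic arguments for the combinatorial verification that the $a$-string through every $g\preceq f$ is short enough to make $X^{*}L(g)$ simple or zero. This is feasible, but note two points you should make explicit. First, $g\preceq f$ ranges over \emph{all} super-Bruhat-lower weights, not merely those appearing in $U_f$, so the enumeration is larger than your sketch suggests. Second, your induction carries the auxiliary hypothesis that $U(h)$ has simple cosocle; the base case (typical $h$) is covered since typical weights satisfy condition~(R) vacuously and Theorem~\ref{formula:canonical}(iii) applies, but you should say so. If completed, your argument would in fact yield the extra conclusion that every $U(f)$ has simple cosocle, which the paper does not assert in this section; conversely, the paper's method avoids both the crystal bookkeeping and the need to track a strengthened inductive hypothesis.
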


\begin{proof}
For typical $f$, this follows from Proposition~\ref{typical}. So
let us now assume that $f$ is atypical.

Each canonical basis element $U_f$ in
Subsection~\ref{subsec:formula:m11} is obtained by applying a
sequence of Chevalley generators dictated by
Procedure~\ref{procedure:m11} to a canonical basis element of
typical weight. Applying the same sequence of translation functors
gives us a sum of tilting modules, denoted by $M(f)$, whose Verma
flag weights are identical to those for the monomials in $U_f$, by
Lemma~\ref{sum:tilt} and Theorem~\ref{superChe=tran}. It follows
by Proposition~\ref{canonical} that $i([M(f)])=U_f(1)$. So it
remains to show that $M(f)=U(f)$. Noting that $U(f)$ is a summand
of $M(f)$, it suffices to prove that $M(f)$ is indecomposable. We
argue case by case using Proposition~\ref{condition:can} and
Proposition~\ref{3terms} as follows.

The indecomposability of $M(f)$ follows from
Proposition~\ref{3terms} if the number of monomials is at most
three. So it remains to check the cases of (T1), (C2), (C4), (C7)
and (C8) and (C9) (in the last cases we only need to consider them
when they have four terms).

For $f$ of the form in (T1), the Verma modules with the first two
weights among four weights in (T1) must lie in the same tilting
module by Proposition~\ref{condition:can}~(ii). Now $M(f)$ is a
direct sum of at most two tilting modules, by Lemma~\ref{tiltVerma}.
If $M(f)$ were a direct sum of two tilting modules, it has to be
$U(f)\oplus U(f^3)$ where $f^3=(\cdots
a,\widehat{a-1}\sim{x}\cdots|a-1|x)$ and $f^4 =(\cdots
\widehat{a}\sim{x}\cdots|a|x)$ are the third and fourth weights in
(T1). Note that $(U(f^3):K(f^3)) =(U(f^3):K(f^4))=1$ and that the
cosocle of $U(f^3)$ is $L(f^4)$. However, $L(f^4)$ cannot be the
socle of $K(f^3)$. For consider the embedding of $\gl(m+1)\otimes
\gl(1)$-modules $K^{12}(\cdots
a,\widehat{a-1}\sim{x}\cdots|a-1)\otimes L^3(x)\supsetneq
K^{12}(\cdots \widehat{a},a-1\sim{x}\cdots|a)\otimes L^3(x)$, which
we may regard as an embedding of $\mathfrak p$-modules. Inducing to
$\gl(m+1|1)$ we get an embedding $K(f^3)\supsetneq K(f^4)$.  But
$K(f^4)$ is not irreducible, and its socle is not $L(f^4)$. This
implies that $U(f^3)$ cannot have isomorphic socle and cosocle and
hence is not $\tau$-self-dual, contradicting Corollary
\ref{tiltingdual}.

Next consider a weight $f$ of the form in (C2). Since the Verma
modules of the first two weights in (C2) belong to the same
tilting module by Proposition~\ref{condition:can}~(i), we have by
Lemma~\ref{tiltVerma} that $M(f) =U(f)$ or $M(f) =U(f)\oplus
U(f^3)$, where $f^3$ is the third weight in (C2). But the second
possibility cannot occur since $f^3$ is of the form (T1) and
$U(f^3)$ has Verma flag length four by the previous paragraph.

For $f$ of the form in (C4), the second and the third weights are
not comparable under the super Bruhat ordering. Hence using
Proposition~\ref{condition:can} the first three terms lie in the
tilting module $U(f)$. By Lemma~\ref{tiltVerma}, $M(f)$ has to be
indecomposable, and thus equal to $U(f)$.

The same argument for (C4) is applicable to (C7).

Finally, the two cases of (C8) and (C9) in the case when we have
four terms can be verified using Proposition~\ref{condition:can}
(i) and the socle-cosocle argument.
\end{proof}

\begin{remark}
In light of the above theorem, the formulas for canonical basis in
Subsection~\ref{subsec:formula:m11} provides explicit information
on the weights of a Verma flag of any tilting module in $\mathcal
O^+_{m,1|1}$.
\end{remark}

\begin{corollary}
Let $f$ be such that $\Sigma_{f_{13}}\cup\Sigma_{f_{23}} \not
=\emptyset$. Let $X$ and $h$ be as defined in Procedure
\ref{procedure:m11}. Then we have $U(f)=XU(h)$.
\end{corollary}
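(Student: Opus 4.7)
The plan is to combine the Fock-space identity $XU_h = U_f$ from Proposition~\ref{canonical} with the character formula $i([U(f)]) = U_f(1)$ of Theorem~\ref{main:m11}, and deduce the module equality by matching total Verma flag lengths and then extracting $U(f)$ as a direct summand of $XU(h)$.

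First I would verify the equality $[XU(h)] = [U(f)]$ in the Grothendieck group of $\mathcal O_{m,1|1}^{+,\Delta}$. Because $X$ is one of the divided-power translation functors $F_{f(i)-1}$ or $F_{f(i)-1}^{(2)}$ prescribed by Procedure~\ref{procedure:m11}, Theorem~\ref{superChe=tran} identifies its action on the Grothendieck group with that of the corresponding Chevalley operator on $\mathcal E^{m,1|1}|_{q=1}$. Applying Theorem~\ref{main:m11} to both $h$ and $f$ and invoking Proposition~\ref{canonical} then gives
\[
  i\bigl([XU(h)]\bigr)\;=\;X\cdot i\bigl([U(h)]\bigr)\;=\;XU_h(1)\;=\;U_f(1)\;=\;i\bigl([U(f)]\bigr),
\]
so $XU(h)$ and $U(f)$ have identical total Verma flag length.

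Next I would appeal to Lemma~\ref{sum:tilt} to decompose $XU(h)=\bigoplus_{\mu}U(\mu)$ as a direct sum of indecomposable tiltings. By the defining property of the canonical basis, $U_f = K_f + \widehat{\sum}_{g\prec f} q\,\mathbb Z[q]\,K_g$, so $f$ is the unique maximal weight occurring in the Verma flag of $XU(h)$; consequently $K(f)$ embeds at the bottom of such a flag, and the summand of $XU(h)$ containing it must be $U(f)$. Combined with the preceding equality of Verma flag lengths, this forces $XU(h) = U(f)$ with no room for further summands. The only genuine content here is the Fock-space computation $XU_h = U_f$ already supplied by Proposition~\ref{canonical} together with the case-by-case indecomposability packaged into Theorem~\ref{main:m11}; the remaining manipulations are formal, so I do not anticipate any substantive obstacle.
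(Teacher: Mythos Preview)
Your argument is correct and is precisely the intended deduction: the paper gives no separate proof of this corollary, as it follows immediately from Theorem~\ref{main:m11}, Proposition~\ref{canonical}, Lemma~\ref{sum:tilt}, and Theorem~\ref{superChe=tran} in exactly the way you describe. One minor simplification: once you have $i([XU(h)])=i([U(f)])$ and hence $[XU(h)]=[U(f)]$, you can bypass the Verma-flag-length count and conclude directly, since $XU(h)=\bigoplus_\mu U(\mu)^{a_\mu}$ gives $\sum_\mu a_\mu[U(\mu)]=[U(f)]$ and the classes $\{[U(\mu)]\}$ are linearly independent (their images under $i$ form the canonical basis at $q=1$).
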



\section{The category of $\gl(2|1)$-modules}
\label{sec:gl21}

In this section we will work out explicitly the Verma flag
structures for tilting modules, projective modules, and the
composition series of Verma modules in the category $\mathcal
O_{2|1}^+$. The results here can be generalized to the category
$\mathcal O_{m+1|1}^+$ in Section~\ref{sec:categorym11} readily
and to the category $\mathcal O_{1+1|n}^+$ in
Section~\ref{canonical:11n} with more complicated notations.

\subsection{The main tools}
Denote by $P(\la)$ the projective cover of $L(\la)$. By abuse of
notations, we shall also write $P(f_\la) =P(\la).$ Recall the BGG
reciprocity for projective modules:
\begin{eqnarray} \label{BGGrec}
(P(f_\la): K(f_\mu)) =[K(f_\mu): L(f_\la)].
\end{eqnarray}
By [Br2, (7.4)],
 %
 %
\begin{eqnarray} \label{tiltrec}
(U(f_\la): K(f_\mu)) =[K(-f_\mu): L(-f_\la)].
\end{eqnarray}


In the following diagrams, $\bar{i}$ (for $i>0$) denotes $-i$, and
the weights are described using elements in $\Z^{2|1}$ via the
bijection $X_{2|1} \cong \Z^{2|1}, \la \mapsto f_\la$. We will be
only concerned about the block $\B$ of $K(00|0)$ in the category
$\mathcal O^+_{2|1}$. Any block of atypicality $1$ in the category
$\mathcal O^+_{2|1}$ is isomorphic to $\B$. A block of atypicality
$0$ is very simple and will be omitted.

\newpage
\subsection{The poset of weights in the block $\B$}
The poset of $\rho$-shifted weights in $\Z^{2|1}$ for the block
$\B$ is listed in the following diagram. Our convention is that
arrows point to lower weights in the super Bruhat ordering.

$$\CD
 \vdots @>>> \vdots \\
 @AAA @AAA \\
 0\bar{3}|\bar{3}  @>>> \bar{3}0|\bar{3} \\
 @AAA @AAA \\
 0\bar{2}|\bar{2}  @>>> \bar{2}0|\bar{2} \\
 @AAA @AAA \\
 0\bar{1}|\bar{1}  @>>> \bar{1}0|\bar{1} \\
@AAA  \\
 00|0  \\
 @AAA  \\
 01|1 @<<< 10|1 \\
 @AAA @AAA \\
 02|2 @<<< 20|2\\
 @AAA @AAA \\
 03|3 @<<< 30|3\\
 @AAA @AAA \\
 \vdots @<<< \vdots
  \endCD $$

\pagebreak

\subsection{The Verma flag structures of tilting modules in $\B$}
\label{subsec:Vermatilt}

Based on Theorem~\ref{canonical11n} (with $n=1$) or
Theorem~\ref{main:m11} (with $m=1$) and the explicit formulas for
canonical basis in Subsection~\ref{subsec:formula:m11}, we list
the weights of the Verma modules (each with multiplicity 1) which
appear in a Verma flag of a tilting module $U(f)$ in the block
$\B$ as follows. Recall from Theorem~\ref{canonical11n} that every
such $U(f)$ has a simple cosocle.
\begin{eqnarray*}
U(0\bar{i}|\bar{i})
 &\approx & \CD
 0\overline{i+1}|\overline{i+1}  @>>> \overline{i+1}0|\overline{i+1} \\
 @AAA @AAA \\
 0\bar{i}|\bar{i}  @>>> \bar{i}0|\bar{i} \endCD\qquad (i \ge 1)
\\ \\ \\
U(\bar{i}0|\bar{i}) &\approx & \CD
\overline{i+1}0|\overline{i+1} \\
@AAA \\
\bar{i}0|\bar{i} \endCD   \qquad\qquad (i \ge 1)
\\ \\ \\
U(00|0) &\approx &
 \CD
 0\bar{1}|\bar{1}  @>>> \bar{1}0|\bar{1} \\
@AAA  \\
 00|0
\endCD,
\qquad
U(01|1)\approx
 \CD
 00|0  @>>> \bar{1}0|\bar{1} \\
 @AAA  \\
 01|1
\endCD
\\ \\ \\
U(10|1) &\approx &
 \CD
 00|0  \\
 @AAA  \\
 01|1 @<<< 10|1
\endCD
\\ \\ \\
U(0j|j) &\approx & \CD
 0,j-1|j-1 \\
 @AAA \\
 0j|j \\
\endCD  \qquad\qquad (j \ge 2)
\\ \\ \\
U(j0|j) &\approx & \CD
 0,j-1|j-1 @<<< j-1,0|j-1 \\
 @AAA @AAA \\
 0j|j @<<< j0|j\\
\endCD  \qquad (j \ge 2)
\end{eqnarray*}

\pagebreak

\subsection{The composition series of Verma modules in $\B$}
\label{subsec:composition}

The weights of the composition factors of a Verma module $K(f)$ in
the block $\B$ are listed as follows. The calculation is based on
(\ref{tiltrec}) and the Verma flag structure of tilting modules in
Subsection~\ref{subsec:Vermatilt}.
\begin{eqnarray*}
K(0\bar{i}|\bar{i})
 &\approx & \CD
 0\overline{i+1}|\overline{i+1}  @>>> \overline{i+1}0|\overline{i+1} \\
 @AAA @AAA \\
 0\bar{i}|\bar{i}  @>>> \bar{i}0|\bar{i} \endCD \qquad (i \ge 1)
\\ \\ \\
K(\bar{i}0|\bar{i}) &\approx & \CD
\overline{i+1}0|\overline{i+1} \\
@AAA \\
\bar{i}0|\bar{i} \endCD   \qquad\qquad (i \ge 1)
\\ \\ \\
K(00|0) &\approx &
 \CD
 0\bar{1}|\bar{1}  @>>> \bar{1}0|\bar{1} \\
@AAA  \\
 00|0
\endCD,
\qquad
K(01|1) \; \approx \;
 \CD
 00|0  \\
 @AAA  \\
 01|1
\endCD
\\ \\ \\
K(10|1) &\approx &
 \CD
 0\bar{1}|\bar{1}  \\
 @AAA  \\
 00|0  \\
 @AAA  \\
 01|1 @<<< 10|1
\endCD
\\ \\ \\
K(0j|j) &\approx  & \CD
 0,j-1|j-1 \\
 @AAA \\
 0j|j \\
\endCD  \qquad\qquad (j \ge 2)
\\ \\ \\
K(j0|j) &\approx & \CD
 0,j-1|j-1 @<<< j-1,0|j-1 \\
 @AAA @AAA \\
 0j|j @<<< j0|j\\
\endCD\qquad (j \ge 2)
\end{eqnarray*}

\pagebreak
\subsection{The Verma flag structures of projective modules in $\B$}
\label{subsec:proj}

The weights of the Verma modules (each with multiplicity 1) which
appear in a Verma flag of a projective module in the block $\B$
are listed as follows. The calculation is based on (\ref{BGGrec})
and Subsection~\ref{subsec:composition}.
\begin{eqnarray*}
P(\bar{i}0|\bar{i})
 &\approx & \CD
 0\overline{i}|\overline{i}  @>>> \overline{i}0|\overline{i} \\
 @AAA @AAA \\
 0\overline{i-1}|\overline{i-1}  @>>> \overline{i-1}0|\overline{i-1}
 \endCD  \qquad (i \ge 2)
\\ \\ \\
P(0\bar{i}|\bar{i})
 &\approx & \CD
0\overline{i}|\overline{i} \\
@AAA \\
0\overline{i-1}|\overline{i-1}
 \endCD   \qquad\qquad (i \ge 2)
\\ \\ \\
P(\bar{1}0|\bar{1}) &\approx &
 \CD
 0\bar{1}|\bar{1}  @>>> \bar{1}0|\bar{1} \\
@AAA  \\
 00|0
\endCD ,
\qquad
P(0\bar{1}|\bar{1}) \;\approx\;
 \CD
 0\bar{1}|\bar{1} \\
@AAA  \\
 00|0   @<<< 10|1
\endCD
\\ \\ \\
P(00|0) &\approx &
 \CD
 00|0  \\
 @AAA  \\
 01|1 @<<< 10|1
\endCD
\\ \\ \\
P(j0|j) &\approx & \CD
 j0|j \\
 @AAA \\
 j+1,0|j+1 \\
\endCD  \qquad\qquad (j \ge 1)
\\ \\ \\
P(0j|j) &\approx & \CD
 0j|j @<<< j0|j \\
 @AAA @AAA \\
 0,j+1|j+1 @<<< j+1,0|j+1
\endCD
\qquad (j \ge 1)
\end{eqnarray*}
\subsection{The projective tilting modules in $\B$}
\label{subsec:bij}

By Theorem~\ref{canonical11n} for $n=1$, the tilting module
$U(0\overline{i-1}|\overline{i-1})$ in the block $\B$ has a simple
cosocle $L(\overline{i}0|\overline{i})$ for $i\ge 1$. Thus the
nontrivial $\gl(2|1)$-module homomorphism $\pi_{-i}:
P(\overline{i}0|\overline{i}) \longrightarrow
U(0\overline{i-1}|\overline{i-1})$ has to be surjective. By
observation from the previous diagrams,
$U(0\overline{i-1}|\overline{i-1})$ and
$P(\overline{i}0|\overline{i})$ have the same Verma flag
multiplicity and thus the same composition series. It follows that
$\pi_{-i}$ is indeed an isomorphism.

Similarly, there is a $\gl(2|1)$-module isomorphism $\pi_i:
P(0i|i) \longrightarrow U(i+1,0|i+1)$ for $i \ge 0$. Again by
observation from the diagrams, the remaining tilting modules are
not projective.

The above discussion can be summarized in the following.

\begin{proposition}
The projective tilting modules in the category $\mathcal O^+_{2|1}$
consist of $U(i0|i)$ for $i\ge 0$ and $U(0j|j)$ for $j<0$.
\end{proposition}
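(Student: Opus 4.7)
The plan is to combine the explicit projective--tilting isomorphisms already constructed in Subsection~\ref{subsec:bij} with a direct bookkeeping comparison of the Verma flag data tabulated in Subsections~\ref{subsec:Vermatilt} and~\ref{subsec:proj}; no new representation-theoretic input is required.

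For the forward inclusion, showing that each module on the list is projective, I would simply reindex the maps $\pi_{-i}$ and $\pi_{i}$ recorded in Subsection~\ref{subsec:bij}. Setting $j=-(i-1)$ in $\pi_{-i}\colon P(\bar{i}0|\bar{i})\xrightarrow{\sim} U(0\overline{i-1}|\overline{i-1})$ for $i\ge 1$ shows that $U(0j|j)$ is projective for every $j\le 0$, and setting $k=i+1$ in $\pi_{i}\colon P(0i|i)\xrightarrow{\sim} U(i+1,0|i+1)$ for $i\ge 0$ shows that $U(k0|k)$ is projective for every $k\ge 1$. The two families overlap only at $U(00|0)$ and together exhaust the modules listed in the proposition.

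For the reverse inclusion I would go through the three remaining families of tilting modules, namely $U(\bar{i}0|\bar{i})$ for $i\ge 1$, $U(01|1)$, and $U(0j|j)$ for $j\ge 2$, and rule out each. Since every tilting module $U(\la)$ is indecomposable by Theorem~\ref{canonical11n}, it is projective if and only if $U(\la)\cong P(\mu)$ for some $\mu$, in which case the multisets of Verma subquotients must agree. Inspecting Subsection~\ref{subsec:proj}, the only indecomposable projectives with Verma flag of length $2$ are the $P(0\bar{i}|\bar{i})$ for $i\ge 2$ and the $P(j0|j)$ for $j\ge 1$, whose Verma weight sets $\{0\bar{i}|\bar{i},\,0\overline{i-1}|\overline{i-1}\}$ and $\{j0|j,\,j+1,0|j+1\}$ do not match either $\{\bar{i}0|\bar{i},\,\overline{i+1}0|\overline{i+1}\}$ or $\{0j|j,\,0,j-1|j-1\}$; the projectives with Verma flag of length $3$ are $P(\bar{1}0|\bar{1})$, $P(0\bar{1}|\bar{1})$ and $P(00|0)$, whose Verma weight sets all differ from the set $\{01|1,\,00|0,\,\bar{1}0|\bar{1}\}$ attached to $U(01|1)$ (the first two contain $0\bar{1}|\bar{1}$ while the third contains $10|1$). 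This rules out all remaining candidates. The only pitfall to watch for is applying the super-Bruhat direction of the weight labels consistently when matching tilting-module and projective-module Verma flags, which is routine given the explicit tables.
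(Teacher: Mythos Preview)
Your proposal is correct and follows essentially the same approach as the paper: the isomorphisms $\pi_{-i}$ and $\pi_i$ handle one direction, and for the other the paper simply says ``by observation from the diagrams, the remaining tilting modules are not projective,'' which is exactly the Verma-flag bookkeeping you carry out explicitly. Two minor remarks: tilting modules are indecomposable by definition (Subsection~\ref{subsec:tilt}), so invoking Theorem~\ref{canonical11n} for this is unnecessary; and your two families of projective tiltings do not actually overlap---$U(00|0)$ arises only from $\pi_{-1}$---though this does not affect the argument.
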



\begin{thebibliography}{ABCD}

\bibitem[AS]{AS} T. Arakawa and T. Suzuki,
{\em Duality between $\mathfrak{sl}_n(\C)$ and the degenerate
affine Hecke algebra}, J. Algebra {\bf 209} (1998), 288--304.

\bibitem[BB]{BB} A. Beilinson and J. Bernstein, {\em Localisation de
$\mathfrak g$-modules}, C.R. Acad. Sci. Paris Ser. I Math. {\bf
292} (1981), 15--18.

\bibitem[BGS]{BGS} A. Beilinson, V. Ginzburg and W. Soergel,
{\em Koszul duality patterns in representation theory}, J. Amer.
Math. Soc. {\bf 9} (1996), 473--527.

\bibitem[Br1]{Br} J. Brundan,
{\em Kazhdan-Lusztig polynomials and character formulae for the
Lie superalgebra ${\mathfrak g\mathfrak l}(m|n)$}, J. Amer. Math.
Soc.  {\bf 16} (2003), 185--231.

\bibitem[Br2]{Br2} ---------,
{\em Tilting modules for Lie superalgebras}, Commun.~Algebra {\bf
32} (2004), 2251-2268.

\bibitem[Br3]{Br3} ---------, {\em Kazhdan-Lusztig polynomials and character formulae for the Lie
superalgebra ${\mathfrak q}(n)$}, Adv. Math. {\bf 182} (2004),
28--77.

\bibitem[BKl]{BKl} J. Brundan and A. Kleshchev,
{\em Representations of shifted Yangians and finite $W$-algebras},
Memoirs AMS (to appear), math.RT/0508003.

\bibitem[BK]{BK} J.L. Brylinski and M. Kashiwara,
{\em Kazhdan-Lusztig conjecture and holonomic systerms}, Invent.
Math. {\bf 64} (1981), 387--410.

\bibitem[CR]{CR} J. Chuang and R. Rouquier,
{Derived equivalences for symmetric groups and
$\mathfrak{sl}_2$-categorification}, Annals Math. (to appear),
math.RT/0407205.

\bibitem[CWZ]{CWZ} S.-J. Cheng, W. Wang and R.B. Zhang,
{\em Super duality and Kazhdan-Lusztig polynomials},
Trans.~Amer.~Math.~Soc.~(to appear), math.RT/0409016 v2.

\bibitem[CWZ2]{CWZ2} ---------,
{\em A Fock space approach to representation theory of
$osp(2|2n)$}, Transformation Groups {\bf 12} (2007), 209--225.

\bibitem[CZ]{CZ} S.-J. Cheng and R.B. Zhang,
{\em Analogue of Kostant's $\mathfrak u$-cohomology formula for
the general linear superalgebra}, Internat. Math. Res. Not.  {\bf
1} (2004), 31--53.

\bibitem[CoI]{CoI}  D. Collingwood and R. Irving,
{\em A decomposition theorem for certain self-dual modules in the
category $\mathcal O$}, Duke Math. J. {\bf 58} (1989), 89--102

\bibitem[Deo]{Deo} V. Deodhar,
{\em On some geometric aspects of Bruhat orderings II: the
parabilic analogue of Kazhdan-Lusztig polynomials}, J. Algebra
{\bf 111} (1987), 483--506.

\bibitem[Don]{Don} S. Donkin,
{\em On tilting modules for algebraic groups}, Math. Z. {\bf 212}
(1993), 39--60.

\bibitem[FKK]{FKK} I. Frenkel, M. Khovanov and A. Kirillov, Jr.,
{\em Kazhdan-Lusztig polynomials and canonical basis}, Transform.
Groups {\bf 3} (1998), 321--336.

\bibitem[FKS]{FKS} I. Frenkel, M. Khovanov and C. Stroppel,
{\em A categorification of finite-dimensional irreducible
representations of quantum $sl(2)$ and their tensor products},
math.QA/0511467, preprint 2005.

\bibitem[Jan]{Jan} J. Jantzen, {\em Representations of algebraic groups}, Second Edition,
Amer. Math. Soc., 2003.

\bibitem[Jim]{Jim} M. Jimbo,
{\em A $q$-analogue of $U({\mathfrak g\mathfrak l}(N+1))$, Hecke
algebra, and the Yang-Baxter equation}, Lett. Math. Phys. {\bf 11}
(1986), 247--252.


\bibitem[K1]{K1} V. Kac, {\em Lie Superalgebras}, Adv. Math. {\bf 16}
(1977), 8--96.

\bibitem[K2]{K2} ---------, {\em Representations of classical
Lie Superalgebras}, Lect. Notes in Math. {\bf 676}, pp.~597--626,
Springer Verlag, 1978.

\bibitem[Kas]{Kas} M.~Kashiwara,
{\em On crystal bases of the $Q$-analogue of universal enveloping
algebras}, Duke Math. J. {\bf 63} (1991), 465--516.

\bibitem[KL1]{KL} D. Kazhdan and G. Lusztig,
{\em Representations of Coxeter groups and Hecke algebras},
Invent. Math. {\bf 53} (1979), 165--184.

\bibitem[KL2]{KL2} ---------,
{\em Schubert varieties and Poincar\'e duality}, Proc. Sympos.
Pure Math. {\bf 36} (1980), 185--203.

\bibitem[KMS]{KMS}
M.~Kashiwara, T.~Miwa, and E.~Stern, {\em Decomposition of
$q$-deformed Fock spaces}, Selecta Math. (N.S.) {\bf 1} (1995),
787--805.


\bibitem[Lu1]{Lu1} G. Lusztig,
{\em Canonical bases arising from quantized enveloping algebras},
J. Amer. Math. Soc. {\bf 3} (1990), 447--498.

\bibitem[Lu2]{Lu} ---------,
{\em Introduction to quantum groups}, Progress in Math. {\bf 110},
Birkh\"auser, 1993.

\bibitem[Lu3]{Lu3} ---------,
{\em Private communication}, Email to W. Wang, September 10, 2007.

\bibitem[Se]{Se} V. Serganova,
{\em Kazhdan-Lusztig polynomials and character formula for the Lie
superalgebra $\mathfrak g\mathfrak l(m\vert n)$}, Selecta Math.
(N.S.) {\bf 2} (1996), 607--651.

\bibitem[So1]{So1} W. Soergel,
{\em Kazhdan-Lusztig polynomials and a combinatoric for tilting
modules}, Represent. Theory (electronic) {\bf 1} (1997), 83--114.

\bibitem[So2]{So2}  ---------,
{\em Character formulas for tilting modules over Kac-Moody
algebras}, Represent. Theory (electronic) {\bf 2} (1998),
432--448.



\end{thebibliography}
\end{document}